\numberwithin{equation}{section}
\theoremstyle{plain}
\newtheorem{lemma}{Lemma}[section]
\newtheorem{theorem}[lemma]{Theorem}
\newtheorem{prop}[lemma]{Proposition}
\newtheorem{assp}{Assumption}
\newtheorem{rem}{Remark}
\newcommand{\E}{\mathbb{E}}
\newcommand{\PP}{\mathbb{P}}
\newcommand{\RR}{\mathbb{R}}
\def\e{\varepsilon}
\def\<{\langle}
\def\>{\rangle}
\def\l{\lambda}
\def\a{\alpha}
\def\nn{\nonumber}
\def\bc{\begin{center}}       \def\ec{\end{center}}
\def\ba{\begin{array}}        \def\ea{\end{array}}
\def\be{\begin{equation}}     \def\ee{\end{equation}}
\def\bea{\begin{eqnarray}}    \def\eea{\end{eqnarray}}
\def\beaa{\begin{eqnarray*}}  \def\eeaa{\end{eqnarray*}}
\begin{document}
\title[Probabilistic limit behaviors of  numerical discretizations]{Probabilistic limit behaviors of  numerical discretizations for time-homogeneous Markov processes}
\subjclass[2020]{60J25; 60F05; 60H35; 37M25}
\author{Chuchu Chen, Tonghe Dang, Jialin Hong, Guoting Song}
\address{LSEC, ICMSEC,  Academy of Mathematics and Systems Science, Chinese Academy of Sciences, Beijing 100190, China,
\and 
School of Mathematical Sciences, University of Chinese Academy of Sciences, Beijing 100049, China}
\email{chenchuchu@lsec.cc.ac.cn; dangth@lsec.cc.ac.cn; hjl@lsec.cc.ac.cn;
songgt606@lsec.cc.ac.cn}
\thanks{This work is funded by the National key R$\&$D Program of China under Grant (No. 2020YFA0713701), National Natural Science Foundation of China (No. 12031020, No. 11971470, No. 12022118, No. 11871068), and by Youth Innovation Promotion Association CAS, China.}
\begin{abstract}

In order to give quantitative estimates for approximating the ergodic limit, we investigate  probabilistic limit behaviors of time-averaging estimators of  numerical discretizations for a class of time-homogeneous Markov processes, by studying the corresponding strong law of large numbers and the central limit theorem. Verifiable   general sufficient conditions are proposed to ensure these limit behaviors, which are related to the properties of strong mixing and strong convergence for numerical discretizations of Markov processes. Our results hold for test functionals with lower regularity compared with existing results, and the analysis does not require the existence of the Poisson equation associated with the underlying Markov process. Notably, our results are applicable  to numerical discretizations for a large class of stochastic systems, including stochastic ordinary differential equations, infinite dimensional stochastic evolution equations, and stochastic functional differential equations.
\end{abstract}

\keywords{Markov process $\cdot$ Numerical discretization $\cdot$ Strong law of large numbers $\cdot$ Central limit theorem $\cdot$ Ergodic limit}
\maketitle
\section{Introduction}\label{Tntr}
For solutions of a large class of autonomous stochastic differential equations, including  solutions of stochastic ordinary  differential equations (SODEs) and  infinite dimensional  stochastic evolution equations (SEEs), and functional solutions of stochastic functional differential equations (SFDEs), ``memorylessness'' is a universal phenomenon  and  usually characterized by the Markov property (see e.g. \cite{AAPergodic, Markovbook}). One of
the fundamental problems in the Markov process theory is the probabilistic limit   behaviors of the time-average 
$\frac1T\int_0^Tf(X_t)\mathrm dt$ as $T\to\infty$ for the Markov process $\{X_t\}_{t\ge 0}$,  where $f$ belongs to a class of test functionals. This problem is closely  related to the invariant measure $\mu$ of the Markov process (see e.g. \cite{AAPvariance}). It is well known that for the time-homogeneous Markov process with strong mixing, the time-average $\frac1T\int_0^Tf(X_t)\mathrm dt$ converges to the ergodic limit $\mu(f):=\int f\mathrm d\mu,$ and the normalized fluctuations around $\mu(f)$ can be
described by a centered Gaussian random variable, i.e., $\{X_t\}_{t\ge 0}$ fulfills the strong law of large numbers (LLN)
\begin{align*}
\lim_{T\to\infty}\frac1T\int_0^Tf(X_t)\mathrm dt=\mu(f)\quad \text{a.s.},
\end{align*}
and the central limit theorem (CLT)
\begin{align*}
\lim_{T\to\infty}\frac{1}{\sqrt{T}}\int_0^T(f(X_t)-\mu(f))\mathrm dt=\mathscr N(0,v^2)\quad \text{in distribution}
\end{align*}
with some variance $v^2$; see e.g. \cite{clt12,  Markovbook, Shirikyan}. In many circumstances, it is  unavoidable to approximate the ergodic limit by using the time-averaging estimator $\frac1k\sum_{i=0}^{k-1}f(Y_i)$ of a numerical discretization $\{Y_i\}_{i\in\mathbb N}$ for the Markov process.
Do numerical discretizations actually inherit  probabilistic limit behaviors in particular the strong LLN and the CLT? 
How to give quantitative estimates for the approximation of the ergodic limit?

There have been some works on the study of the strong LLN and the CLT of  numerical discretizations for stochastic systems. For example,  for SODEs with globally Lipschitz continuous coefficients, 
 the error between the time-averaging estimator  for Euler-type methods  and the invariant measure is estimated  in the almost sure sense, which implies the strong LLN (see e.g. \cite{Brehier16, siam10});  authors in \cite{AAP12} obtain  the CLT for the Euler--Maruyama (EM) scheme with decreasing step of a wide class of Brownian ergodic diffusions; 
authors in \cite{cltsde} prove the CLT and the self-normalized Cram\'er-type moderate deviation of the time-averaging estimator for the  EM method. For SODEs with non-globally Lipschitz continuous coefficients, the author in \cite{jdc2023} 
establishes the strong LLN and
the CLT of the backward EM method (BEM) by investigating the boundedness of the $p$th ($p > 2$) moment of the BEM method in the infinite time
horizon and the regularity of the solution to the Poisson equation. For the case of infinite dimensional SEEs, a first attempt is \cite{dang2023}, where the strong LLN and the  CLT for approximating the ergodic limit via a full discretization are established for parabolic stochastic partial differential equations.
More recently,  for the  stochastic
reaction-diffusion equation near sharp interface limit, authors in \cite{cui2023weak} establish a CLT of 
the temporal numerical approximation   by studying  properties of the corresponding Poisson equation. 
A tool used in the above mentioned papers to study the limit behaviors of numerical discretizations is the Poisson equation associated with the original stochastic system. The analysis generally requires that the solution of the Poisson equation possesses certain smoothness and boundedness. 
For some stochastic process, e.g. the functional solution of the SFDE, the structure of the Poisson equation is too complicated  to derive the required properties.
Nevertheless, for the general Markov process, it is not known yet if there are some criteria to judge the inheritance of probabilistic  limit behaviors by a numerical discretization of the process.

Focusing on these problems, 
 this paper proposes verifiable general sufficient conditions to ensure the strong LLN and the CLT
for numerical discretizations of time-homogeneous Markov processes.
 The conditions are related to  properties of strong mixing and strong convergence of numerical discretizations for  Markov processes.
 It is shown that once the numerical discretizations satisfy these conditions, the strong LLN and the CLT hold, i.e.,
\begin{align*}
\lim_{|\Delta|\to0}\lim_{k\to\infty}\frac{1}{k}\sum_{i=0}^{k-1}f(Y^{x^h,\Delta}_{t_i})= \mu(f)\quad\mbox{a.s.},
\end{align*}
and
\begin{align*}
\lim_{\tau\to0}\frac{1}{\sqrt {k\tau}}\sum_{i=0}^{k-1}\big(f(Y^{x^h,\Delta}_{t_i})-\mu(f)\big)\tau= \mathscr N(0,v^2)\quad\mbox{in distribution},
\end{align*}
where  parameters $k$ and $\Delta$ satisfy some relation in the CLT, and the limit variance $v^2=2\mu((f-\mu(f))\int_0^{\infty}(P_tf-\mu(f))\mathrm dt)$ coincides with the one of the underlying  Markov process. 
 We remark that in the strong LLN and the CLT, the test functional  $f$ is of  polynomial growth  and has certain weighted H\"older continuity.  
See Theorems \ref{LLNle} and \ref{CLTle} for details. 
%
%
There are two distinct advantages of our results. On  one hand, our approach  does not require the specific differential structure of the Markov process, and 
the proposed conditions for probabilistic limit behaviors of both the Markov process and its numerical discretizations are general and verifiable. 
This makes the results flexible and easy to apply to a large variety of stochastic systems and  corresponding numerical discretizations.
On the other hand, compared with the existing results, 
our results hold for test functionals with lower regularity.


For the proof of the CLT, a
 prerequisite
 is to construct  a suitable discrete martingale term such that it contains the essential contribution for the convergence of the normalized time-averaging estimator. 
As mentioned before, in the existing works, the discrete martingale term is extracted by means of the associated Poisson equation. For the general Markov process, the technique based on the Poisson equation is not applicable 
due to that the differential structure of the Markov process may be unknown.
To solve this problem, 
we construct a new discrete martingale term 
by fully utilizing the Markov property and strong mixing  of numerical discretizations of Markov processes. Some properties of the discrete martingale that support the essential contribution for the time-averaging estimator are fully analyzed; see Section \ref{property_mar} for details. 
With this in hand, and combining the full utilization   of the strong mixing of numerical discretizations and the detailed analysis of the internal relation between parameters $k$ and  $\Delta,$   the CLT of numerical discretizations is finally obtained with the
limit variance coinciding with the one of the underlying Markov process. We would like to mention in  Remark \ref{rmk3.2} that the strong LLN and the CLT still hold with certain  trade-off between the regularity of test functionals and the convergence order of numerical discretizations. Precisely, if the strong convergence of a numerical discretization is replaced by the weak convergence, 
our results hold for some class of 
test functionals.

As expected, 
our results are well applicable for numerical discretizations of a wide variety of stochastic systems, which  is supported by 
 discussions of 
SODEs, infinite dimensional  SEEs, and SFDEs.  
First, as a test case for our results, we consider SODEs with superlinearly growing coefficients and the corresponding  BEM method, which have been well studied in the literature. We show that the strong LLN and the CLT hold for both the exact solution and its numerical discretization for test functionals with lower regularity. This weakens the condition on test functionals of existing results (where, at least, the first and the second derivatives of test functionals  need to be bounded). Moreover, the trade-off between the regularity of test functionals and the convergence order of the numerical discretization is also discussed. 
Similar results can be derived for the infinite dimensional SEE case, which is our second application. In particular, for the stochastic Allen--Cahn equation whose coefficient grows superlinearly, our results imply  the strong LLN and the CLT for such equation 
and its full discretization, 
which has not been reported before to our knowledge.  
Last but not least, we consider the SFDE case which is  one of the stochastic systems that motivates this study. By verifying the proposed conditions,
we obtain the strong LLN and the CLT of the  EM method of the SFDE for the first time.


The paper is organized as follows. In the next section, some preliminaries are introduced, and  the probabilistic limit behaviors including the strong LLN and the CLT of the time-homogeneous Markov processes are presented, whose proofs are given in Appendix \ref{appen_exact}. Section \ref{sec_main} presents our main results, 
including sufficient conditions ensuring the strong LLN and the CLT of numerical discretizations as well as the applications to stochastic systems. Some verifications of conditions in the application are given in Appendices \ref{spdeorder} and \ref{sfdeproof}. 
  Section \ref{sec_LLN} and Section \ref{sec_CLT} are devoted to presenting proofs of the strong LLN and the CLT of numerical discretizations,  respectively, 
based on the convergence of the numerical invariant measure and the decomposition of the normalized time-averaging estimator.
The proofs of properties of the conditional variance of the martingale difference sequence are given in Section \ref{proof_prop}.

\section{Preliminaries}
In this section, we give some preliminaries  for the study of the $E$-valued time-homogeneous Markov process $\{X^x_t\}_{t\ge 0}$.
Throughout this article, we use $K$ to denote a generic positive constant  independent of the initial datum and the step-size $\Delta$, which may take different values at  different appearances. Let $\lfloor a \rfloor$ denote the integer part of a real number $a$ and $\lceil a\rceil$ denote the minimal integer greater than or equal to $a$. Let $a\vee{}b:=\max\{a,b\}$ and $a\wedge{}b:=\min\{a,b\}$ for real numbers $a$ and $b$. Denote by 
 $\stackrel{d}{\longrightarrow}$, $\stackrel{\mathbb P}{\longrightarrow}$, and $\stackrel{a.s.}{\longrightarrow}$ the convergence in distribution,  in probability, and  in the almost sure sense, respectively. 

\subsection{Settings}

Let $(E, \|\cdot\|)$ be a real-valued  separable Banach space and $\mathfrak B(E)$ be the Borel $\sigma$-algebra. 
Denote by $\mathcal P(E)$ the family of probability measures  on $(E,\mathfrak B(E))$ and by $\nu(f):=\int_{E}f(u)\mathrm d\nu(u)$ the integral of the measurable functional $f$ with respect to $\nu\in\mathcal P(E).$  
For fixed positive constants $p\geq1$ and $\gamma\in(0,1]$, define the quasi-metric  $d_{p,\gamma}$  on $E$  by
\begin{align}\label{5.1}
d_{p,\gamma}(u_1,u_2):=(1\wedge\|u_1-u_2\|^{\gamma})(1+\|u_1\|^p+\|u_2\|^p)^{\frac{1}{2}}.
\end{align}
Then we introduce a test functional space related to this quasi-metric.  Define $\mathcal C_{p,\gamma}:=\mathcal C_{p,\gamma}(E;\mathbb R)$  the set of continuous functions  on $E$ endowed with the following norm
\begin{align}\label{5.2}
\|f\|_{p,\gamma}:=\sup_{u\in E}\frac{|f(u)|}{1+\|u\|^{\frac{p}{2}}}+\sup_{\substack{u_1,u_2\in E\\u_1\neq u_2}}\frac{|f(u_1)-f(u_2)|}{d_{p,\gamma}(u_1,u_2)}.
\end{align}
It is worthwhile pointing out that $\mathcal C_{p_1,\gamma}\subset \mathcal C_{p_2,\gamma}$ for $p_2\geq p_1\geq1$ and  
$\mathcal C_{p,\gamma_2}\subset \mathcal C_{p,\gamma_1}$ for $0<\gamma_1\leq \gamma_2\leq1$, see e.g. \cite{Shirikyan} for a more general form of the class of spaces. 
The Wasserstein quasi-distance  induced by the quasi-metric is defined by 
\begin{align}\label{5.3}
\mathbb W_{p,\gamma}(\nu_1,\nu_2):=\inf_{\pi\in\Pi(\nu_1,\nu_2) }\int_{E\times E}d_{p,\gamma}(u_1,u_2)\pi(\mathrm du_1,\mathrm du_2),
\end{align}
where  $\nu_1,\nu_2\in
 \mathcal P_{p,\gamma}(E):=\{\nu\in\mathcal P(E):\int_{E\times E}d_{p,\gamma}(u_1,u_2)\mathrm d\nu(u_1)\mathrm d\nu(u_2)<\infty\}$ and $\Pi(\nu_1,\nu_2)$ denotes  the collection of all probability measures on $E\times E$ with
marginal measures $\nu_1$ and $\nu_2$.

Note  that  
 when $\nu_1(\|\cdot\|^{p})<\infty $ and $\nu_2(\|\cdot\|^{p})<\infty $,
 \begin{align*}
\mathbb W_{p,\gamma}(\nu_1,\nu_2)\nn
\leq&{} \big(1+\nu_1(\|\cdot\|^{p})+\nu_2(\|\cdot\|^{p})\big)^{\frac{1}{2}} \Big(\inf_{\pi\in\Pi(\nu_1,\nu_2) }\int_{E\times E}1\wedge\|u_1-u_2\|^{2\gamma}\pi(\mathrm du_1,\mathrm du_2)\Big)^{\frac{1}{2}}\nn\\
\leq&{} \big(1+\nu_1(\|\cdot\|^{p})+\nu_2(\|\cdot\|^{p})\big)^{\frac{1}{2}} \Big(\inf_{\pi\in\Pi(\nu_1,\nu_2) }\int_{E\times E}1\wedge\|u_1-u_2\|^{2}\pi(\mathrm du_1,\mathrm du_2)\Big)^{\frac{\gamma}{2}}\nn\\
= &{}\big(1+\nu_1(\|\cdot\|^{p})+\nu_2(\|\cdot\|^{p})\big)^{\frac{1}{2}}(\mathbb W_{2}(\nu_1,\nu_2))^{\gamma},
\end{align*}
where  \eqref{5.1} and the H\"older inequality are used, and $\mathbb{W}_p$ is the bounded-Wasserstein distance defined by 
\begin{align*}
\mathbb{W}_p(\nu_1, \nu_2):=\Big(\inf_{\pi\in\Pi(\nu_1, \nu_2)}\int_{E\times E}1\wedge\|u_1-u_2\|^p\pi(\mathrm du_1,\mathrm du_2)\Big)^{\frac{1}{p}}.
\end{align*}
This leads to that for any $f\in\mathcal C_{p,\gamma}$,
\begin{align}\label{5.4}
|\nu_1(f)-\nu_2(f)|&=\inf_{\pi\in \Pi(\nu_1,\nu_2)}\Big|\int_{E\times E}(f(u_1)-f(u_2))\pi(\mathrm du_1,\mathrm du_2)\Big|\nn\\
&\leq
\|f\|_{p,\gamma}\mathbb W_{p,\gamma}(\nu_1,\nu_2)\leq \|f\|_{p,\gamma} \big(1+\nu_1(\|\cdot\|^{p})+\nu_2(\|\cdot\|^{p})\big)^{\frac{1}{2}}(\mathbb W_{2}(\nu_1,\nu_2))^{\gamma}.
\end{align}

Let $\{X_t^{x}\}_{t\geq0}$ be a family of $E$-valued  time-homogeneous  Markov processes with the deterministic initial value $X_0^{x}=x\in E$ on a filtered probability space $( \Omega,\mathcal{F},\{\mathcal F_t\}_{t\ge 0},\PP )$,
 in which the corresponding expectation is denoted by  $\E$. 
 Let $\Delta$ denote the uniform step-size,
 where
 $\Delta=\tau$ if the temporal semi-discretization is considered for the finite or infinite dimensional case,
 and $\Delta=(h,\tau)^{\top}$ if the spatial-temporal full discretization is considered for the infinite dimensional case. Here,  $h$ and $\tau$ represent step-sizes in spatial and temporal directions, respectively. To unify the notation, we always use 
 $\Delta=(h,\tau)^{\top}\in[0,1]\times(0,1]$,  where $h=0$ when only the temporal semi-discretization is considered.  
 Denote by $E^h(\subset E)$ the state space of  discretizations with norm $\|\cdot\|_{h}$ satisfying  $\|u^h\|_{h}=\|u^h\|$ for $u^h\in E^h$.
 We note that $E^h$ is of finite dimensional when the spatial-temporal full discretization is considered.
 When only the temporal semi-discretization is considered for the finite or the infinite dimensional space $E$, we let $E^0:=E$. When the initial datum $x\in E^h,$ we always take $x^h=x.$
Denote $t_{k}:=k\tau$ for $k\in\mathbb N$.
For each given $\Delta$, let $\{Y^{x^h,\Delta}_{t_k}\}_{k\in\mathbb N}$ denote the $E^h$-valued discretization  of $\{X_t^{x}\}_{ t\geq0}$ satisfying $Y^{x^h,\Delta}_{0}=x^{h}$,
and define the continuous version  by
$$Y^{x^h,\Delta}_{t}=\sum_{k=0}^{\infty}Y^{x^h,\Delta}_{t_k} \textbf 1_{[t_k, t_{k+1})}(t).$$

Denote by $\mu_t^{x}$ and $\mu_t^{x^h,\Delta}$ the probability measures generated by  $X_t^{x}$ and $Y_t^{x^h,\Delta}$, respectively, i.e., for any $A\in\mathfrak{B}(E)$,
$
\mu_t^{x}(A)={}\PP\big\{\omega\in\Omega: X_{t}^{x}\in A\big\},\;
\mu_t^{x^h,\Delta}(A)={}\PP\big\{\omega\in\Omega: Y_{t}^{x^h,\Delta}\in A\big\}.
$
Denote by $\mathcal B_b:=\mathcal B_b(E;\mathbb R)$ (resp. $\mathcal C_b:=\mathcal C_b(E;\mathbb R)$) the family of bounded Borel measurable  (resp.  bounded and continuous) functions on $E$. When there is no confusion, we also denote $\mathcal B_b(E^h;\mathbb R),\mathcal C_b(E^h;\mathbb R),\mathcal C_{p,\gamma}(E^h;\mathbb R)$ by $\mathcal B_b,\,\mathcal C_b,\,\mathcal C_{p,\gamma},$ respectively. 

Define the linear operators $P_{t}$  and  $P_{t}^{\Delta}$ generated by $X^{\cdot}_{t}$ and $Y^{\cdot,\Delta}_{t}$, respectively, as 
\begin{align*}
&P_{t}:  \mathcal B_b\rightarrow  \mathcal B_b,~~~  P_{t} f(x):=\E\big(f(X^{x}_t)\big)=\int_{E} f(u)\mathrm d\mu^{x}_t(u) \quad\forall\; x\in E,\nn\\ 
&P_{t}^{\Delta}: \mathcal B_b\rightarrow  \mathcal B_b,~~~  P_{t}^{\Delta} f(x^h):=\E\big(f(Y^{x^h,\Delta}_t)\big)=\int_{E} f(u)\mathrm d\mu^{x^h,\Delta}_t(u)\quad\forall\; x\in E^h.
\end{align*}
According to the Markov property of  $X^{x}_t$, one deduces that $P_t$ is a Markov semigroup. 
If $X^{x}_t$ (resp. $Y^{x^h,\Delta}_t$) admits a unique invariant measure $\mu$ (resp. $\mu^{\Delta}$) satisfying $\mu(\|\cdot\|^{l})<\infty$ (resp. $\mu^{\Delta}(\|\cdot\|^{l})<\infty$) for some constant $l>0$, it follows from \cite[Remark 3.2.3]{DaPrato1996} that $P_{t}$ (resp. $P^{\Delta}_t$) is uniquely extendible to a bounded linear operator on $L^l(E,\mu)$ which is still denoted by $P_{t}$ (resp. $P^{\Delta}_{t}$). 

\subsection{Probabilistic limit behaviors of time-homogeneous Markov processes}
The following assumption is introduced for the time-homogeneous Markov process to ensure  the existence and uniqueness of the invariant measure as well as the probabilistic limit behaviors including the strong LLN and the CLT. 

 \begin{assp}\label{a1}
 Assume that the time-homogeneous  Markov process $\{X^x_t\}_{t\ge 0}$ satisfies the following conditions.
\begin{itemize}
\item[\textup{(i)}] There exist  constants $r\ge 2,\tilde r\ge 1$, and $L_1>0$ such that for any $x\in E$,
\begin{align*}
&\sup_{t\geq0}\E [\|X^{x}_t\|^{r}]\leq L_1(1+\|x\|^{\tilde rr}).
\end{align*}
\item[\textup{(ii)}] There exist constants $\gamma_1\in(0,1]$, $\beta\in[0,r-1],\, L_2>0$,  and a  continuous function $\rho:[0,+\infty)\to[0,+\infty)$  with
$
\int_{0}^{\infty} \rho^{\gamma_1}(t)\mathrm dt<\infty
$
such that for any $x,y\in E,$
\begin{align*}
\big(\E[\|X^{x}_{t}-X^{y}_{t}\|^2]\big)^{\frac{1}{2}}\leq L_2\|x-y\|(1+\|x\|^{\beta}+\|y\|^{\beta})\rho(t).
\end{align*}
\end{itemize}
 \end{assp} 
The probabilistic limit behaviors including the LLN and the CLT of Markov processes have been well-studied in the monograph \cite{Markovbook}. 
 For our convenience, we list them in the following proposition  and postpone   
the  proofs to  Appendix \ref{appen_exact} to make the article self-contained.

 \begin{prop}\label{Limit_exact}
 Let Assumption {\textup{\ref{a1}}} hold,  $\gamma\in[\gamma_1,1]$, and $p$ satisfy $2\tilde r\big(p\tilde r+(2+3\beta)\gamma\big)\leq r.$ Then $\{X^x_t\}_{t\ge 0}$ admits a unique invariant measure $\mu\in\mathcal P(E),$ and fulfills the strong LLN and the CLT: for any $f\in\mathcal C_{p,\gamma},$
 \begin{align*}
&\frac1T\int_0^Tf(X^x_t)\mathrm dt\overset{a.s.}\longrightarrow\mu(f)\text{ as }T\to\infty,\\
 &\frac{1}{\sqrt{T}}\int_0^T(f(X^x_t)-\mu(f))\mathrm dt\overset{d}\longrightarrow\mathscr N(0,v^2) \quad\text{ as }T\to\infty,
 \end{align*}
 where $v^2:=2\mu\big((f-\mu(f))\int_{0}^{\infty}(P_{t}f-\mu(f))\mathrm dt\big)$. 
 \end{prop}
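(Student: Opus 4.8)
The plan is to follow the classical route for additive functionals of mixing Markov processes, but to express every step purely through the semigroup $P_t$ and the moment/mixing bounds of Assumption \ref{a1}, so that no differential (generator or Poisson-equation) structure is invoked. First I would settle the invariant measure. Coupling $X_t^x$ with $X_t^y$ turns Assumption \ref{a1}(ii) into the contraction $\mathbb W_2(\mu_t^x,\mu_t^y)\le L_2\|x-y\|(1+\|x\|^{\beta}+\|y\|^{\beta})\rho(t)$, and since $\int_0^\infty\rho^{\gamma_1}<\infty$ forces $\liminf_{t\to\infty}\rho(t)=0$, this contraction together with the uniform moment bound (i) makes $\{\mu_n^{x_0}\}$ Cauchy in $\mathbb W_2$ along a suitable subsequence; the limit $\mu$ is invariant (by continuity of $P_s$) and unique (by the same contraction), and inherits finite $r$-th moments by Fatou. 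The estimate \eqref{5.4} then upgrades the $\mathbb W_2$-contraction to the functional decay
\begin{align*}
|P_tf(x)-\mu(f)|\le K\|f\|_{p,\gamma}\big(1+\|x\|^{p\tilde r/2}\big)\big(1+\|x\|^{1+\beta}\big)^{\gamma}\rho(t)^{\gamma}
\end{align*}
for $f\in\mathcal C_{p,\gamma}$, where I use that $\int_0^\infty\rho^{\gamma}<\infty$ still holds for $\gamma\in[\gamma_1,1]$. Consequently $g:=\int_0^\infty(P_tf-\mu(f))\,\mathrm dt$ is well defined with polynomial growth (the exponent being governed precisely by $2\tilde r(p\tilde r+(2+3\beta)\gamma)\le r$), and integrating the $\mathbb W_{p,\gamma}$-modulus gives weighted H\"older continuity of $g$, placing it in a space $\mathcal C_{p',\gamma}$.

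For the strong LLN I would bound the autocovariance. Conditioning on $\mathcal F_s$ and using the Markov property,
\begin{align*}
\E\big[(f(X_s^x)-\mu(f))(f(X_t^x)-\mu(f))\big]=\E\big[(f(X_s^x)-\mu(f))(P_{t-s}f(X_s^x)-\mu(f))\big],
\end{align*}
whose modulus is $\le K\rho(t-s)^{\gamma}$ after applying the decay estimate and the moment bounds. Integrating over $[0,T]^2$ gives $\E[(\int_0^T(f(X_t^x)-\mu(f))\,\mathrm dt)^2]\le KT$, hence convergence of the time average to $\mu(f)$ in $L^2$. To reach the almost sure statement I would run Borel--Cantelli along $T_k=k^2$ (where the bound $K/T_k$ is summable) and control the fluctuation on each gap $[T_k,T_{k+1}]$ by a second-moment estimate of the same type, which is again summable.

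The CLT is the heart of the matter, and here I would replace the Poisson-equation martingale by the identity $P_sg-g=-\int_0^s(P_uf-\mu(f))\,\mathrm du$, which follows directly from the definition of $g$ and the semigroup property. A one-line conditional-expectation check then shows that
\begin{align*}
N_t:=g(X_t^x)+\int_0^t(f(X_u^x)-\mu(f))\,\mathrm du
\end{align*}
is an $\{\mathcal F_t\}$-martingale, yielding the exact decomposition $\int_0^T(f(X_t^x)-\mu(f))\,\mathrm dt=(N_T-N_0)+g(X_0^x)-g(X_T^x)$. Since $g$ has polynomial growth and the moments of $X_T^x$ are uniformly bounded, the boundary term is $O(1)$ and vanishes after dividing by $\sqrt T$. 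I would then apply the martingale CLT to the discrete skeleton $\xi_k:=N_k-N_{k-1}$: by time-homogeneity its conditional variance is $\E[\xi_k^2\mid\mathcal F_{k-1}]=\Phi(X_{k-1}^x)$ for a function $\Phi\in\mathcal C_{p'',\gamma}$, so the already-proven strong LLN gives $\frac1n\sum_{k=1}^n\Phi(X_{k-1}^x)\to\mu(\Phi)$ a.s., while the conditional Lindeberg condition follows from a $(2+\epsilon)$-moment bound on $\xi_k$. This produces $\frac{1}{\sqrt n}(N_n-N_0)\to\mathscr N(0,\mu(\Phi))$, and a gap estimate passes from integer to continuous $T$. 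Finally, computing $\frac1T\E_\mu[(\int_0^T(f-\mu(f))\,\mathrm dt)^2]$ in two ways --- through the orthogonal martingale increments and through the stationary autocovariance $2\int_0^\infty\mu((f-\mu(f))(P_tf-\mu(f)))\,\mathrm dt$ --- identifies the limit as $\mu(\Phi)=v^2=2\mu((f-\mu(f))g)$.

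The main obstacle I anticipate is the moment bookkeeping that makes this chain close: one must verify that $g$, and especially the conditional-variance function $\Phi$ built from $g$ and $f$, genuinely lie in spaces $\mathcal C_{p',\gamma}$ to which the LLN applies, and this is exactly where the balance condition $2\tilde r(p\tilde r+(2+3\beta)\gamma)\le r$ is consumed. A secondary difficulty is that the CLT must hold from a deterministic start $x$ rather than from stationarity; this is handled by observing that the conditional variances are driven by $X_{k-1}^x$ through the LLN, which already accommodates arbitrary initial data, rather than by assuming the skeleton is stationary.
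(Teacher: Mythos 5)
Your proposal is correct in substance and, at the structural level, it coincides with the paper's own proof: the corrector $g=\int_0^{\infty}(P_tf-\mu(f))\,\mathrm dt$, the martingale $N_t$ (the paper's $\mathscr M^{x}_{N}$ is exactly your $N_N-N_0$, and its remainder $\mathscr R^{x}_{N}$ is exactly your boundary term $g(x)-g(X^x_N)$), the conditional-variance function $\Phi$ (the paper's $H$), and the identification $\mu(\Phi)=v^2$ all play the same roles there. Where you genuinely diverge is in how the two limit theorems are concluded. For the strong LLN, the paper uses a higher-moment bound (Lemma \ref{exactle} with $\bar\varrho=2$, after Shirikyan) so that Borel--Cantelli runs along integer times, whereas you use only the second-moment/autocovariance bound and pass to the subsequence $T_k=k^2$; your version is more elementary and works equally well. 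For the CLT, the paper does not appeal to any off-the-shelf martingale CLT: it proves convergence of characteristic functions by hand, with a blocking argument (blocks of length $N^{1-\epsilon_0}$) in which the block-averaged conditional variances are compared to $\mu(H)$ through the mixing estimate \eqref{Xmix} applied to $H$. You instead invoke the standard martingale CLT (Brown/Hall--Heyde), feeding it (a) convergence of $\frac1n\sum_{k\le n}\Phi(X^x_{k-1})$ and (b) a conditional Lindeberg condition obtained from a $(2+\epsilon)$-moment bound on the increments --- which the standing condition on $p$ does supply, exactly as in the paper's choice of $c_1>2$ with $c_1(\frac{p\tilde r}{2}+(1+\beta)\gamma)\le r$. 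Your route is shorter and conceptually cleaner; the paper's hands-on argument has the advantage of serving as the template that is reused verbatim for the numerical discretization (Theorem \ref{CLTle}), where step-size-dependent parameters make a direct appeal to a fixed martingale CLT awkward. Two points need tightening. First, the LLN you prove is for time integrals $\frac1T\int_0^T\Phi(X^x_s)\,\mathrm ds$, not for the skeleton average $\frac1n\sum_{k=1}^n\Phi(X^x_{k-1})$; you need the discrete-time analogue, which follows by the same covariance/Borel--Cantelli argument (or simply note that the martingale CLT only needs convergence in probability, which your second-moment bound already delivers for the discrete average). Second, the membership $\Phi\in\mathcal C_{p'',\gamma}$ with controlled norm, which you correctly identify as the place where $2\tilde r\big(p\tilde r+(2+3\beta)\gamma\big)\le r$ is consumed, is the single longest verification in the paper's proof (its Claim 1 on $H$), so it cannot be waved through as routine bookkeeping, though the paper confirms it does close.
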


\section{Main results and applications}\label{sec_main}
 In this section, we propose verifiable general sufficient conditions to ensure the strong LLN and the CLT of numerical discretizations for time-homogeneous Markov processes. The applications to SODEs, infinite dimensional  SEEs, and SFDEs are also presented. 
 \subsection{Sufficient conditions and main results}
 Sufficient conditions ensuring the strong LLN and the CLT of numerical discretizations are proposed, which are related to the strong   mixing and strong convergence of numerical discretizations. 
 \begin{assp}\label{a2}
Assume that there exists $\tilde\Delta=(\tilde h,\tilde \tau)^{\top}\in[0,1]\times(0,1]$ such that for any  $h\in[0,\tilde h]$, $\tau\in(0,\tilde\tau]$, and  $x^h\in E^h$, $\{Y^{x^h,\Delta}_{t_k}\}_{k\in\mathbb N}$ is time-homogeneous  Markovian and  satisfies  the following conditions.
\begin{itemize}
\item[\textup{(i)}] There exist  constants $q\ge 2,\tilde q\ge 1$, and $L_3>0$ such that
\begin{align*}
&\sup_{k\geq0}\E [\|Y^{x^h,\Delta}_{t_{k}}\|^{q}]\leq L_3 (1+\|x^h\|^{\tilde qq}).
\end{align*}
\item[\textup{(ii)}] There exist constants $\gamma_2\in(0,1]$, $\kappa\in[0,q-1], L_4>0$, and a    function $\rho^{\Delta}:[0,+\infty)\to[0,+\infty)$ with
$$
\sup_{\{h\in[0,\tilde h],\tau\in(0,\tilde\tau]\}}\tau\sum_{k=0}^{\infty} \big(\rho^{\Delta}(t_k)\big)^{\gamma_2}<\infty
$$
such that for any $x^h, y^h\in E^h$,
\begin{align*}
\big(\E[\|Y^{x^h,\Delta}_{t_k}-Y^{y^h,\Delta}_{t_k}\|^2]\big)^{\frac{1}{2}}\leq L_4\|x^h-y^h\|(1+\|x^h\|^{\kappa}+\|y^h\|^{\kappa})\rho^{\Delta}(t_k).
\end{align*}
\end{itemize}
 \end{assp} 

Below, when there is no confusion, we always assume that $h\in [0,\tilde h]$ and $\tau\in(0,\tilde{\tau}].$ And constants $r$ and $q$ in Assumptions \ref{a1} (\romannumeral1) and \ref{a2} (\romannumeral1), respectively, are always assumed to be large enough to meet the need.

\begin{assp}\label{a3}
Assume that there exist $\a=(\a_1,\a_2)^{\top}\in\mathbb R_+^2 $ and  $L_5>0$ such that for any $x\in E$, 
 $$\sup_{t\geq 0}\big(\E[\|X^{x}_{t}-Y^{x^h,\Delta}_{t}\|^2]\big)^{\frac{1}{2}}\leq L_5(1+\|x\|^{\tilde r\vee \tilde q})|\Delta^{\a}|.$$
where
$\Delta^{\a}:=(h^{\a_1},\tau^{\a_2})^{\top}$ and $|\Delta^{\a}|:=h^{\a_1}+\tau^{\a_2}$.
\end{assp}

 Denote the time-averaging estimator  of the numerical discretization by 
$$
\frac 1kS_{k}^{x^h,\Delta}:=\frac 1k\sum_{i=0}^{k-1}f(Y^{x^h,\Delta}_{t_i})
$$ for $f\in\mathcal C_{p,\gamma}$ with suitable parameters $p\ge 1$ and $\gamma\in(0,1].$ Then we present the strong LLN of the time-averaging estimator.
\begin{theorem}\label{LLNle}
Let Assumptions \textup{\ref{a1}--\ref{a3}} hold, $\gamma\in[\gamma_2,1],$
and  $p\ge 1$ satisfy  $p(1+\tilde q)+2(1+\kappa)\gamma\leq q$.  
Then for any $f\in\mathcal C_{p,\gamma},$ we have 
$$\lim\limits_{|\Delta|\rightarrow 0}\lim\limits_{k\to\infty}\frac{1}{k}S_{k}^{x^h,\Delta}=\mu(f) \quad \mbox{a.s.}$$
\end{theorem}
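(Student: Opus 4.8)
The plan is to decouple the double limit into two independent pieces: an \emph{inner} limit in $k$, for each fixed $\Delta$, which is a strong law of large numbers for the Markov chain $\{Y^{x^h,\Delta}_{t_k}\}_{k\in\mathbb N}$ and produces the deterministic number $\mu^\Delta(f)$, where $\mu^\Delta$ is the invariant measure of the chain; and an \emph{outer}, purely deterministic, limit in $\Delta$, namely the convergence $\mu^\Delta(f)\to\mu(f)$ of the numerical invariant measures. Because the inner limit is a constant once the almost-sure event is fixed, the two limits do not interfere, and it suffices to establish separately
$$\lim_{k\to\infty}\tfrac1kS^{x^h,\Delta}_k=\mu^\Delta(f)\quad\text{a.s.}\qquad\text{and}\qquad\lim_{|\Delta|\to0}\mu^\Delta(f)=\mu(f).$$

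For the inner limit I would first record, in parallel with the exact case of Proposition \ref{Limit_exact}, that Assumption \ref{a2}(ii) forces $\rho^\Delta(t_k)\to0$, whence a Cauchy argument in $\mathbb W_{p,\gamma}$ yields a unique invariant measure $\mu^\Delta$ with finite moments. Coupling the initial data through $\mu^\Delta$ and feeding Assumption \ref{a2}(ii) into \eqref{5.4} then gives a contraction estimate of the form $|P^{\Delta}_{t_k}f(y)-\mu^\Delta(f)|\le K\|f\|_{p,\gamma}\big(\rho^\Delta(t_k)\big)^{\gamma}(1+\|y\|^{c})$ with $c=c(p,\tilde q,\kappa,\gamma)$, so that Ces\`aro averaging produces $\E[\tfrac1kS^{x^h,\Delta}_k]\to\mu^\Delta(f)$. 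The heart of this step is the variance bound: writing $g:=f-\mu^\Delta(f)$ and using the Markov property to replace $\E[g(Y^{x^h,\Delta}_{t_j})\mid\mathcal F_{t_i}]$ by $P^{\Delta}_{t_{j-i}}g(Y^{x^h,\Delta}_{t_i})$, the contraction estimate controls each covariance by $|\mathrm{Cov}(f(Y^{x^h,\Delta}_{t_i}),f(Y^{x^h,\Delta}_{t_j}))|\le K\big(\rho^\Delta(t_{j-i})\big)^{\gamma}$, the moment condition $p(1+\tilde q)+2(1+\kappa)\gamma\le q$ together with Assumption \ref{a2}(i) being exactly what renders the accumulated polynomial weights integrable uniformly in $i$. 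Since $\gamma\ge\gamma_2$ and $\rho^\Delta(t_k)\to0$ force $\sum_m(\rho^\Delta(t_m))^{\gamma}<\infty$, summing the diagonal and the geometric-type off-diagonal contributions gives $\mathrm{Var}(\tfrac1kS^{x^h,\Delta}_k)\le C_\Delta/k$. Chebyshev's inequality and the Borel--Cantelli lemma along the subsequence $k_n=n^2$ then yield $\tfrac1{k_n}S^{x^h,\Delta}_{k_n}\to\mu^\Delta(f)$ a.s., and an $L^2$-estimate for $\max_{n^2\le k<(n+1)^2}|S^{x^h,\Delta}_k-S^{x^h,\Delta}_{n^2}|$ fills in the remaining indices, giving the full a.s. convergence.

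For the outer limit I would insert an arbitrary time $t_k$ and use the triangle inequality
$$|\mu^\Delta(f)-\mu(f)|\le|\mu^\Delta(f)-P^{\Delta}_{t_k}f(x^h)|+|P^{\Delta}_{t_k}f(x^h)-P_{t_k}f(x)|+|P_{t_k}f(x)-\mu(f)|.$$
The first and third terms tend to $0$ as $k\to\infty$ by the contraction of the chain and of the exact semigroup (the latter following from Assumption \ref{a1}(ii) and \eqref{5.4}, with $\mu$ supplied by Proposition \ref{Limit_exact}). The middle term is controlled \emph{uniformly in $k$} by plugging Assumption \ref{a3} into \eqref{5.4}: Cauchy--Schwarz and $1\wedge\|\cdot\|^{2\gamma}\le\|\cdot\|^{2\gamma}$ give $|P^{\Delta}_{t_k}f(x^h)-P_{t_k}f(x)|\le K\|f\|_{p,\gamma}|\Delta^\a|^{\gamma}$, where the polynomial growth in $\|x\|$ produced by Assumption \ref{a3} is absorbed by the uniform moment bounds of Assumptions \ref{a1}(i) and \ref{a2}(i) and is a fixed constant for the fixed initial datum $x$. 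Letting $k\to\infty$ leaves $|\mu^\Delta(f)-\mu(f)|\le K|\Delta^\a|^{\gamma}$, and then $|\Delta|\to0$ concludes the proof.

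I expect the main obstacle to be the variance estimate in the inner limit: converting the qualitative strong mixing of Assumption \ref{a2}(ii) into a \emph{summable} covariance bound requires carefully propagating the polynomial weights through the Markov property and \eqref{5.4}, and it is precisely here that the constraint $p(1+\tilde q)+2(1+\kappa)\gamma\le q$ is forced in order to keep the relevant moments finite uniformly in $i$. The other essential ingredient is the uniform-in-time character of Assumption \ref{a3}, since it is what prevents the discretization error in the middle term above from growing with $t_k$ and thereby allows the interchange with the limit $k\to\infty$.
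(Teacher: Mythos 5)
Your proposal is correct, and it follows the same overall decomposition as the paper: an inner almost-sure LLN toward $\mu^{\Delta}(f)$ for each fixed $\Delta$, plus a deterministic outer limit $\mu^{\Delta}(f)\to\mu(f)$. The differences are in how each piece is executed. For the outer limit, you argue directly at the level of the test functional via the triangle inequality through $P^{\Delta}_{t_k}f(x^h)$ and $P_{t_k}f(x)$, absorbing Assumption \ref{a3} through \eqref{5.4}; the paper instead proves $\mathbb W_2(\mu,\mu^{\Delta})\leq K|\Delta^{\alpha}|$ (Proposition \ref{Worder}) and then applies \eqref{5.4} once, yielding the same bound $|\mu(f)-\mu^{\Delta}(f)|\leq K\|f\|_{p,\gamma}|\Delta^{\alpha}|^{\gamma}$ — these are essentially the same argument, and your functional-level version is literally what the paper uses in Remark \ref{rmk3.2}. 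The genuine divergence is in the inner limit: the paper invokes Lemma \ref{l5.1} with $\varrho=2$, i.e.\ a \emph{fourth}-moment bound $\E[|\frac1kS_k^{x^h,\Delta}-\mu^{\Delta}(f)|^{4}]\leq K t_k^{-2}$ (the hypothesis $p(1+\tilde q)+2(1+\kappa)\gamma\leq q$ is calibrated exactly so that this $\varrho=2$ case applies), so that Chebyshev gives tail probabilities summable over \emph{all} $k$ and Borel--Cantelli applies without any subsequence, with additional bookkeeping (the random times $\mathcal K^{x^h,\Delta}$, $T^{x^h,\Delta}$, $\mathfrak N^{x^h}$ and the $\tau^{-1/4}$ threshold) to keep quantitative track of the step size. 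You instead build a \emph{second}-moment bound $\E[|\frac1kS_k^{x^h,\Delta}-\mu^{\Delta}(f)|^{2}]\leq C_{\Delta}/k$ from the covariance estimate supplied by the Markov property and \eqref{trmix1}, and then run the classical subsequence argument ($k_n=n^2$, Borel--Cantelli, and a maximal estimate over $n^2\leq k<(n+1)^2$ to fill the gaps). Both routes are sound; yours is more elementary and, as you can check, actually needs only the weaker condition $\tfrac p2(1+\tilde q)+(1+\kappa)\gamma\leq q$ (the $\varrho=1$ case of Lemma \ref{l5.1}), at the cost of the extra interpolation step (which requires $\E[|f(Y^{x^h,\Delta}_{t_i})|^2]\leq K$, available here), while the paper's higher-moment route avoids subsequences entirely and produces the rate-type, $\tau$-explicit control that is reused in the CLT machinery.
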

The following CLT characterizes the probabilistic limit behavior  of  
$\frac{1}{\sqrt {k\tau}} \sum_{i=0}^{k-1}\big(f(Y^{x^h,\Delta}_{t_i})-\mu(f)\big)\tau$, 
for $f\in\mathcal C_{p,\gamma}$ with   suitable parameters $p\geq 1$ and $\gamma\in(0,1]$. Recall that $v^2=2\mu\big((f-\mu(f))\int_{0}^{\infty}(P_{t}f-\mu(f))\mathrm dt\big)$ is given  in Proposition \ref{Limit_exact}.

\begin{theorem}\label{CLTle}
Let Assumptions \textup{\ref{a1}--\ref{a3}} hold,  $\gamma\in[\gamma_1\vee\gamma_2,1]$,  $k:=\lceil\tau^{-1-2\lambda}\rceil$ with  $\lambda\in(0,\alpha_2\gamma)$, and  $h:=\tau^{\alpha_2/\alpha_1}$ when $h\neq0$. Let $p\ge 1$ satisfy 
\begin{align}\label{condip}
\tilde q^2\Big(3\vee(\frac{1}{\lambda}+1)\Big)\Big(p(\tilde q\vee\tilde r)+\big(3+4(\kappa\vee\beta)\big)\gamma\Big)\leq q\wedge r,
\end{align}
and  $\e\ll\l$  with 
$\sqrt \e-\frac{\e}{\l}-\frac{\e\sqrt \e}{\l}>0$ satisfy  
\begin{align}\label{rho_remainder}
\lim_{\e\to0}\lim_{\tau\to0}\tau^{\e}\sum_{i=\lceil\tau^{-1-\frac{\e}{4}}\rceil}^{k}(\rho^{\Delta}(t_i))^{\gamma}=0.
\end{align} 
Then for any $f\in\mathcal C_{p,\gamma},$ 
\begin{align*}
\frac{1}{\sqrt {k\tau}}\sum_{i=0}^{k-1}\big(f(Y^{x^h,\Delta}_{t_i})-\mu(f)\big)\tau\stackrel{d}{\longrightarrow}\mathscr N(0,v^2)\quad \text{as }\tau\rightarrow0.
\end{align*}
\end{theorem}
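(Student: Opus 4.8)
The plan is to prove the CLT by a Gordin-type martingale approximation tailored to the discretization, thereby bypassing the Poisson equation. First I reduce to a conveniently centered problem: writing $\bar f:=f-\mu^{\Delta}(f)$, I split
\begin{align*}
\frac{1}{\sqrt{k\tau}}\sum_{i=0}^{k-1}\big(f(Y^{x^h,\Delta}_{t_i})-\mu(f)\big)\tau=\sqrt{\tfrac{\tau}{k}}\sum_{i=0}^{k-1}\bar f(Y^{x^h,\Delta}_{t_i})+\sqrt{k\tau}\,\big(\mu^{\Delta}(f)-\mu(f)\big),
\end{align*}
where the deterministic bias term vanishes: coupling $X^x_t$ with $Y^{x^h,\Delta}_t$ in Assumption \ref{a3} and passing $t\to\infty$ gives $\mathbb W_2(\mu,\mu^{\Delta})\lesssim|\Delta^{\a}|$, so by \eqref{5.4} the bias is $O(\sqrt{k\tau}\,|\Delta^{\a}|^{\gamma})=O(\tau^{\a_2\gamma-\lambda})\to0$, which is exactly where $\lambda\in(0,\a_2\gamma)$ and $h=\tau^{\a_2/\a_1}$ enter. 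For the centered sum, fix a truncation level $m=\lceil\tau^{-1-\e/4}\rceil$, let $\mathcal G_i:=\sigma(Y^{x^h,\Delta}_{t_0},\dots,Y^{x^h,\Delta}_{t_i})$, and introduce the truncated resolvent $g_m:=\sum_{j=0}^{m-1}P^{\Delta}_{t_j}\bar f$ on $E^h$. The Markov property and $P^{\Delta}_\tau P^{\Delta}_{t_j}=P^{\Delta}_{t_{j+1}}$ yield the telescoping identity $g_m-P^{\Delta}_\tau g_m=\bar f-P^{\Delta}_{t_m}\bar f$, so that $D_{i+1}:=g_m(Y^{x^h,\Delta}_{t_{i+1}})-(P^{\Delta}_\tau g_m)(Y^{x^h,\Delta}_{t_i})$ is a martingale-difference sequence for $\{\mathcal G_i\}$ and
\begin{align*}
\sum_{i=0}^{k-1}\bar f(Y^{x^h,\Delta}_{t_i})=\sum_{i=0}^{k-1}D_{i+1}+\big(g_m(Y^{x^h,\Delta}_{t_0})-g_m(Y^{x^h,\Delta}_{t_k})\big)+\sum_{i=0}^{k-1}P^{\Delta}_{t_m}\bar f(Y^{x^h,\Delta}_{t_i}).
\end{align*}

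Next I dispose of the two non-martingale terms after multiplication by $\sqrt{\tau/k}$. The strong mixing of Assumption \ref{a2}(ii), via \eqref{5.4}, forces a decay $\|P^{\Delta}_{t_j}\bar f\|_{L^2(\mu^{\Delta})}\lesssim(\rho^{\Delta}(t_j))^{\gamma}$, whence $\|g_m\|_{L^2}\lesssim\tau^{-1}\sup_{\Delta}\tau\sum_j(\rho^{\Delta}(t_j))^{\gamma}=O(\tau^{-1})$; combined with the moment bound Assumption \ref{a2}(i) and $\sqrt{\tau/k}\sim\tau^{1+\lambda}$, the boundary term is $O(\tau^{\lambda})\to0$. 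For the remainder I estimate its second moment through the covariances $\mathrm{Cov}\big(P^{\Delta}_{t_m}\bar f(Y_{t_i}),P^{\Delta}_{t_m}\bar f(Y_{t_j})\big)$; the far-diagonal contributions collapse, after normalization, to the tail sum in \eqref{rho_remainder}, and the choice $m=\lceil\tau^{-1-\e/4}\rceil$ with $\e\ll\lambda$ makes it vanish. Throughout, the polynomial-growth condition \eqref{condip} is what guarantees that $g_m$, $g_m^2$ and the third-moment quantities below stay in some $\mathcal C_{p',\gamma}$ with the $q$-th (and $r$-th) moments of Assumption \ref{a2}(i) (resp. Assumption \ref{a1}(i)) finite; the factor $\tilde q^2$ and the term $3+4(\kappa\vee\beta)$ there track the composition of growth rates when squaring $g_m$ and when bounding $\E|D_{i+1}|^{3}$.

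The core is a triangular-array martingale CLT for $\sqrt{\tau/k}\sum_{i=0}^{k-1}D_{i+1}$. A conditional Lyapunov (Lindeberg) condition follows from the uniform moment bounds on $D_{i+1}$ together with \eqref{condip}. The decisive step is the convergence of the conditional variance: since $\E[D_{i+1}^2\mid\mathcal G_i]=\Phi_\tau(Y^{x^h,\Delta}_{t_i})$ with $\Phi_\tau:=P^{\Delta}_\tau(g_m^2)-(P^{\Delta}_\tau g_m)^2$, I must establish
\begin{align*}
\frac{\tau}{k}\sum_{i=0}^{k-1}\Phi_\tau(Y^{x^h,\Delta}_{t_i})\stackrel{\PP}{\longrightarrow}v^2.
\end{align*}
I do this in two stages. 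A quantitative discrete ergodic theorem (proved from the mixing of Assumption \ref{a2}, in the spirit of Theorem \ref{LLNle}) shows that the time average concentrates at $\tau\mu^{\Delta}(\Phi_\tau)$. Then, using $P^{\Delta}_\tau g_m=g_m-\bar f+P^{\Delta}_{t_m}\bar f$, the invariance $\mu^{\Delta}(P^{\Delta}_\tau\psi)=\mu^{\Delta}(\psi)$, and $g_m=\sum_{j<m}P^{\Delta}_{t_j}\bar f$, a direct computation gives $\tau\mu^{\Delta}(\Phi_\tau)=\tau\mu^{\Delta}(\bar f^2)+2\tau\sum_{j=1}^{m-1}\mu^{\Delta}\!\big(\bar f\,P^{\Delta}_{t_j}\bar f\big)+(\text{lower order})$. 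The first summand vanishes with $\tau$, while the second is a Riemann sum which, after replacing $\mu^{\Delta}$ and $P^{\Delta}_{t_j}$ by $\mu$ and $P_{t_j}$ through Assumption \ref{a3} and \eqref{5.4}, converges to $2\int_0^{\infty}\mu(\bar f\,P_t\bar f)\,\mathrm dt=v^2$, the variance of the underlying process from Proposition \ref{Limit_exact}.

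The hard part will be this last identification of the limiting variance \emph{inside} a triangular array: both the test functional $\Phi_\tau$ and its effective horizon $m$ degenerate as $\tau\to0$, so the discrete ergodic average must be controlled uniformly while the discretization error accumulated over $j\le m$ remains negligible. This is precisely what pins down the interlocking parameter constraints—$h=\tau^{\a_2/\a_1}$ balances spatial and temporal errors so that $|\Delta^{\a}|\sim\tau^{\a_2}$; $\lambda<\a_2\gamma$ makes $\sqrt{k\tau}\,|\Delta^{\a}|^{\gamma}\to0$ so the strong order beats the normalization; and $\e\ll\lambda$ with \eqref{rho_remainder} tames the accumulation over $j\le m=\lceil\tau^{-1-\e/4}\rceil$. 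Once the three contributions are in place, Slutsky's theorem combines the martingale CLT with the vanishing boundary, remainder, and bias terms to conclude that $\frac{1}{\sqrt{k\tau}}\sum_{i=0}^{k-1}\big(f(Y^{x^h,\Delta}_{t_i})-\mu(f)\big)\tau\stackrel{d}{\longrightarrow}\mathscr N(0,v^2)$.
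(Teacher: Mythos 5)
Your decomposition is essentially the paper's own: the paper's martingale \eqref{mar_eq} is exactly your construction with $m=\infty$, i.e.\ with the full resolvent $\sum_{j\ge 0}(P^{\Delta}_{t_j}f-\mu^{\Delta}(f))$ (which converges under the mixing assumption), and your treatment of the bias and boundary terms matches the paper's Step 2. The genuine gap is in your core step. You invoke a triangular-array martingale CLT, whose hypothesis is $\frac{\tau}{k}\sum_{i=0}^{k-1}\Phi_\tau(Y^{x^h,\Delta}_{t_i})\stackrel{\PP}{\longrightarrow}v^2$, and you propose to obtain this from ``a quantitative discrete ergodic theorem in the spirit of Theorem \ref{LLNle}.'' This cannot work at the required scaling. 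Writing $H^{\Delta}:=\tau^{2}\Phi_\tau$ (up to truncation this is the paper's conditional variance, with $\|H^{\Delta}\|_{2\tilde p_{\gamma},\gamma}\leq K\|f\|^{2}_{p,\gamma}$ uniformly in $\Delta$ by Proposition \ref{p6.1} (iii)), your requirement reads $\frac{1}{\tau}\big|\frac{1}{k}\sum_{i=0}^{k-1}H^{\Delta}(Y^{x^h,\Delta}_{t_i})-\mu^{\Delta}(H^{\Delta})\big|\stackrel{\PP}{\longrightarrow}0$, i.e.\ the ergodic average of $H^{\Delta}$ must concentrate at precision $o(\tau)$. But the concentration available from mixing (Lemma \ref{l5.1} with $\varrho=1$, or any covariance bound built on \eqref{trmix1}) is only $O((k\tau)^{-1/2})=O(\tau^{\lambda})$ in $L^{2}$, so the resulting bound on the quadratic-variation fluctuation is $O(\tau^{\lambda-1})$, which diverges whenever $\lambda<1$ — the case in every application of the theorem. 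Note also that Theorem \ref{LLNle} itself is an iterated limit ($k\to\infty$ at fixed $\Delta$, then $|\Delta|\to0$) and cannot be applied along the coupled regime $k=\lceil\tau^{-1-2\lambda}\rceil$. Since Assumptions \ref{a1}--\ref{a3} carry no one-step regularity of the chain, there is no mechanism to improve these bounds: the pathwise fluctuations of the quadratic variation are not controllable at scale $o(1)$, and the hypothesis of the standard martingale CLT cannot be verified.

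This is precisely the obstruction the paper's proof is engineered to avoid: it never establishes, nor uses, convergence in probability of the quadratic variation. It proves convergence of the characteristic function directly and, in the critical term $I_3(k)$ of Section \ref{sub5.2}, freezes the factor $\exp({\bf i}\iota\tau^{\lambda}\mathscr M^{x^h,\Delta}_{\cdot})$ at the start of blocks of length $\lceil\tau^{-1-\varepsilon}\rceil$; conditioning on the block start replaces the random quantity $H^{\Delta}(Y^{x^h,\Delta}_{t_j})$ by the deterministic one $P^{\Delta}_{t_j}H^{\Delta}(y)$, whose deviation from $\mu^{\Delta}(H^{\Delta})$ decays in $j$ by \eqref{trmix1}; the block sums are then closed by splitting at $\lceil\tau^{-1-\varepsilon/4}\rceil$ (early indices controlled through $\E[|\mathscr M^{y,\Delta}_{\cdot}|^{2}]$, late indices through \eqref{rho_remainder}); and the price is the frozen-exponential error $I_{3,2}(k)$, handled by high-moment Chebyshev estimates — this is exactly where the exponent $3\vee(\frac{1}{\lambda}+1)$ in \eqref{condip} and the constraint $\sqrt{\varepsilon}-\frac{\varepsilon}{\lambda}-\frac{\varepsilon\sqrt{\varepsilon}}{\lambda}>0$ are consumed. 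Your argument needs to import this block-conditioning mechanism (or an equivalent substitute); without it the central step fails. A secondary gap: truncating the resolvent at $m=\lceil\tau^{-1-\varepsilon/4}\rceil$ creates the extra remainder $\sqrt{\tau/k}\sum_{i=0}^{k-1}P^{\Delta}_{t_m}\bar f(Y^{x^h,\Delta}_{t_i})$, and every bound you sketch for it reduces to needing $(\rho^{\Delta}(t_m))^{\gamma}\to0$ uniformly in $\Delta$, which neither Assumption \ref{a2} (ii) nor \eqref{rho_remainder} supplies ($\rho^{\Delta}$ is not assumed monotone); the paper's untruncated resolvent avoids this entirely, since the corresponding remainder $\mathscr R^{x^h,\Delta}_{k,1}$ is pointwise $O(1)$ and hence trivially $o(\sqrt{k\tau})$.
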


\begin{rem}
\begin{itemize}
\item[(\romannumeral1)] If Assumption \ref{a1} \textup{(\romannumeral1)} is replaced by: for any $r\ge2,$ there exist $\tilde r\ge 1$ and $L_1>0$ such that $\sup_{t\ge 0}\mathbb E[\|X^x_t\|^r]\leq L_1(1+\|x\|^{\tilde rr}),$ and Assumption \ref{a2} \textup{(\romannumeral1)} is replaced by: for any $q\ge2,$ there exist $\tilde q\ge 1$ and $L_3>0$ such that $\sup_{k\ge 0}\mathbb E[\|Y^{x^h,\Delta}_{t_k}\|^q]\leq L_3(1+\|x^h\|^{\tilde qq}),$
then the restriction \eqref{condip} on $p$ can be  eliminated. 
\item[(\romannumeral2)] When $P^{\Delta}_{t_k}$ is exponential mixing, i.e., $\rho^{\Delta}(t_k)=e^{-ct_k}$ with some $c>0,$ then  \eqref{rho_remainder} is satisfied naturally.
\end{itemize}
\end{rem}

\begin{rem}\label{rmk3.2}
Our main results in Theorems \ref{LLNle} and  \ref{CLTle}   are proved  with a strong convergence condition but for  test functionals  with lower regularity, i.e.,  $f\in \mathcal C_{p,\gamma}$. They still hold with certain  trade-off between the  convergence condition of numerical discretizations (i.e., Assumption \ref{a3}) and the regularity of test functionals. 
Precisely,  if Assumption \ref{a3} is weaken as:   
 \begin{align}\label{weak_eq}
 \sup_{t\ge 0}|\mathbb E[f(X^{x}_{t})-f(Y^{x^h,\Delta}_{t})]|\leq K|\Delta^{\alpha}|
 \end{align} for  test functionals belonging to 
some space $\mathfrak C$,
then Theorems \ref{LLNle} and \ref{CLTle} still hold for  $f\in \mathcal C_{p,\gamma}\cap \mathfrak C$.

The main difference in  proofs of  this remark and Theorem \ref{CLTle}  lies in the estimation of terms  $|P_tf(u)-P^{\Delta}_tf(u)|,\,u\in E^h$  and $|\mu(f)-\mu^{\Delta}(f)|$ (see e.g.
terms \eqref{t3.1} and \eqref{add_eq2}), where   $\mu^{\Delta}$ is the  invariant measure of the numerical discretization 
(see Proposition \ref{prop_IM}). 
 Under the assumption \eqref{weak_eq}, we have that for $u\in E^h,$
\begin{align*}
|P_{t}f(u)-P^{\Delta}_{t}f(u)|\leq \sup_{t\ge 0}|\mathbb E[f(X^{u}_{t})-f(Y^{u,\Delta}_{t})]|\leq K|\Delta^{\alpha}|,
\end{align*}
and 
 $|\mu(f)-\mu^{\Delta}(f)|$ can be estimated as
\begin{align*}
&\quad |\mu(f)-\mu^{\Delta}(f)|\leq |P_{t_k}f(u)-\mu(f)|
+
|P^{\Delta}_{t_k}f(u)-\mu^{\Delta}(f)|+\sup_{k\ge 0}|P_{t_k}f(u)-P^{\Delta}_{t_k}f(u)|\\
&\leq K\|f\|_{p,\gamma}(1+\|u\|^{\frac{p\tilde r}{2}+(1+\beta)\gamma})\rho^{\gamma}(t_k)
+K\|f\|_{p,\gamma}(1+\|u\|^{\frac{p\tilde q}{2}+(1+\kappa)\gamma})(\rho^{\Delta}(t_k))^{\gamma}
 +K|\Delta^{\alpha}|\\
 &\leq K|\Delta^{\alpha}|,
\end{align*}
where we used  \eqref{Xmix} and  \eqref{trmix1}, and in the last step we
let $k\to\infty$.


\end{rem}

\subsection{Applications}\label{sec_ex}
In this subsection, we present applications of Theorems \ref{LLNle}  and \ref{CLTle} to numerical discretizations of stochastic systems, including  SODEs, infinite dimensional SEEs, and SFDEs. 
Denote by $\mathcal C^k_b:=\mathcal C^k_b(E;\mathbb R)$ (resp. $\widetilde {\mathcal C}^k_b:=\widetilde {\mathcal C}^k_b(E;\mathbb R)$) the bounded continuous functions (resp. continuous functions) that are 
continuously differentiable with bounded derivatives up to order $k.$
\subsubsection{SODEs}
In this subsection, we consider the case of  SODEs that has been extensively studied in the literature, see \cite{Brehier16, jdc2023, cltsde, siam10, AAP12} for instance.
Precisely, we focus on verifying that the BEM method satisfies Assumptions \ref{a2}--\ref{a3} of this paper, and exploring the trade-off between the 
regularity of test functionals and the convergence of the numerical discretization.   

The equation has the following form
\begin{align}\label{ex_sode}\mathrm d X^x_t=b(X^x_t)\mathrm dt+\sigma (X^x_t)\mathrm dW(t),\quad X^x_0=x,
\end{align} where $\{W(t),t\ge 0\}$ is a $D$-dimensional standard Brownian motion on a filtered probability space $(\Omega,\mathcal F,\{\mathcal F_t\}_{t\ge 0},\mathbb P)$, the superlinearly growing coefficient $b:\mathbb R^d\to\mathbb R^d$ satisfies the dissipative condition, and $\sigma:\mathbb R^d\to\mathbb R^{d\times D}$ is Lipschitz continuous.  
The BEM method of \eqref{ex_sode} reads as $$Y^{x,\Delta}_{t_{i+1}}=Y^{x,\Delta}_{t_i}+b(Y^{x,\Delta}_{t_{i+1}})\tau+\sigma(Y^{x,\Delta}_{t_{i}})\delta W_i,\quad \delta W_i=W(t_{i+1})-W(t_i).$$

We first consider \eqref{ex_sode} with coefficients  satisfying conditions in \cite{jdc2023}, from which we obtain that 
\begin{itemize}
\item[(Ea)]
 Assumption \ref{a1} (\romannumeral1) is satisfied with $\tilde r=1$ and any $r\ge 2;$ 
 \item[(Eb)] Assumption \ref{a1} (\romannumeral2) is satisfied with $\beta=0$,  $\rho(t)=e^{-ct}$ for some $c>0$, and any $\gamma_1\in (0,1]$; 
\end{itemize}
see  \cite[Proposition 2.2]{jdc2023} for details. Thus by virtue of Proposition \ref{Limit_exact}, the exact solution $\{X^x_t\}_{t\ge 0}$ admits a unique invariant measure $\mu$ and fulfills the strong LLN and the CLT. 

For the BEM method, 
\begin{itemize}
\item[(Na)] Assumption \ref{a2} (\romannumeral1) is satisfied with $\tilde q=1$ and any  $q\ge 2;$ 
\item[(Nb)] Assumption \ref{a2} (\romannumeral2) is satisfied with $\kappa=0,\rho^{\Delta}(t)=e^{-ct}$ for some $c>0,$ and any $\gamma_2\in(0,1];$
\item[(Nc)] Assumption \ref{a3} is satisfied with 
$|\Delta^{\a}|=\tau^{\frac{1}{2}}$; 
\end{itemize}
see  \cite[Propositions 2.2 and 3.1]{jdc2023} for details. Thus by Theorem \ref{LLNle}, for any $p\ge 1,$  $\gamma\in(0,1],$ we have
\begin{align*}
\lim_{\tau\to0}\lim_{k\to\infty}\frac 1kS^{x,\Delta}_k=\mu(f)\quad \text{a.s.}\quad \forall\; f\in\mathcal C_{p,\gamma}.
\end{align*}
By Theorem \ref{CLTle}, we have that for any $p\ge 1,\,\gamma\in(0,1]$, $k=\lceil \tau^{-1-2\lambda}\rceil$ with  $\lambda\in(0,\frac12\gamma)$, 
\begin{align}\label{clt_sode}
\frac{1}{\sqrt{k\tau}}\sum_{i=0}^{k-1}\big(f(Y^{x,\Delta}_{t_i})-\mu(f)\big)\tau\overset d\longrightarrow\mathscr{N}(0,v^2)\quad\text{ as }\tau\to0\quad \forall\; f\in\mathcal C_{p,\gamma}.
\end{align}  

As we stated in Remark \ref{rmk3.2}, when the weak convergence of the BEM method is used, 
Theorems \ref{LLNle} and \ref{CLTle} still hold for some class of test functionals. To simplify the presentation,  we use the weak error result given in \cite{Luyulan}, where  the time-independent weak error analysis for the BEM method for the SODE with piecewise continuous arguments is  investigated. By means of the results for the case without memory,  one derives that (Ea) is replaced by 
\begin{itemize}
\item[(Ea')]
 Assumption \ref{a1} (\romannumeral1) is satisfied with $\tilde r=1$ and some $r\ge 2;$ 
\end{itemize}
see  \cite[Lemma 2.4]{Luyulan} for details. Thus by virtue of Proposition \ref{Limit_exact}, the exact solution $\{X^x_t\}_{t\ge 0}$ admits a unique invariant measure $\mu$ and fulfills the strong LLN and the CLT. 

For the BEM method, (Na) and (Nc) are replaced respectively by
\begin{itemize}
\item[(Na')] Assumption \ref{a2} (\romannumeral1) is satisfied with $\tilde q=1$ and some  $q\ge 2;$ 
\item[(Nc')] Remark \ref{rmk3.2} is satisfied with $\sup_{t\ge 0}|\mathbb E[f(X^x_{t})-f(Y^{x,\Delta}_{t})]|\leq K\tau$ for $f\in\mathcal C^3_b$; 
\end{itemize}
see  \cite[Eq. (13) and Theorem 4.9]{Luyulan} for details. Thus by Theorem \ref{LLNle} and the fact that $\mathcal C^3_b\subset \mathcal C_{p,\gamma}$, we have 
\begin{align*}
\lim_{\tau\to0}\lim_{k\to\infty}\frac 1kS^{x,\Delta}_k=\mu(f)\quad \text{a.s.}\quad \forall\; f\in\mathcal C_{b}^3.
\end{align*}
By Theorem \ref{CLTle}, we have that 
for $k=\lceil \tau^{-1-2\lambda}\rceil$ with  $\lambda\in(0,1)$, 
\begin{align}\label{clt_sode}
\frac{1}{\sqrt{k\tau}}\sum_{i=0}^{k-1}\big(f(Y^{x,\Delta}_{t_i})-\mu(f)\big)\tau\overset d\longrightarrow\mathscr{N}(0,v^2)\quad\text{ as }\tau\to0\quad \forall\; f\in\mathcal C_b^3.
\end{align}  
Note that the solution of the Poisson equation $\mathcal L\varphi=f-\mu(f)$ is $\varphi=-\int_0^{\infty}(P_tf-\mu(f))\mathrm dt,$ where $\mathcal L$ is the generator of  \eqref{ex_sode}. When $f\in\mathcal C_b^3,$ properties of the Poisson equation are well-studied, see e.g. \cite{siam10}. 
It follows from  $\varphi\mathcal L\varphi=\frac12 \mathcal L\varphi^2-\frac12 \|\sigma^{\mathrm T}\nabla \varphi\|^2$ and $\mu(\mathcal L\varphi^2)=0$ that $v^2=-2\mu(\varphi\mathcal L\varphi)=\mu(\|\sigma^{\top}\nabla \varphi\|^2)$, which is consistent with that of \cite[Theorems 3.2 and 3.4]{jdc2023}.

\subsubsection{Infinite dimensional SEEs}
In this subsection, we consider the case of the infinite dimensional SEEs, which has been studied in \cite{AAPergodic, dang2023, cui2021, Kruse}. We focus on verifying that full discretizations  satisfy  Assumptions \ref{a2}--\ref{a3} 
and investigating the trade-off between the regularity of test functionals and the convergence of the numerical discretization.

Consider the following infinite dimensional SEE  on $E:=L^2((0,1);\mathbb R)$
\begin{align}\label{ex_spde}
\mathrm dX^x_t=AX^x_t\mathrm dt+F(X^x_t)\mathrm dt+\mathrm dW(t), \quad X^x_0=x,
\end{align}
where $A:\textrm{Dom}(A)\subset E\to E$ is the Dirichlet Laplacian with homogeneous Dirichlet boundary conditions, and  $\{W(t),t\ge 0\}$ is a generalized $Q$-Wiener process on a filtered probability space $(\Omega,\mathcal F,\{\mathcal F_t\}_{t\ge 0},\mathbb P)$ satisfying the usual regularity condition $\|(-A)^{\frac{\beta_1-1}{2}}Q^{\frac12}\|_{\mathcal L_2}<\infty$ for $\beta_1\in(0,1]$ with $\|\cdot\|_{\mathcal L_2}$ being the Hibert--Schmidt operator norm, and $F$ is globally Lipschitz and satisfies the dissipative condition.  
For this equation,
\begin{itemize}
\item[(Ea)] Assumption \ref{a1} (\romannumeral1) is satisfied with $\tilde r=1$ and any $r\ge 2;$
\item[(Eb)] Assumption \ref{a1} (\romannumeral2) is satisfied with $\beta=0,$  $\rho(t)=e^{-ct}$ for some $c>0$, and any $\gamma_1\in(0,1];$
\end{itemize}
see \cite[Proposition 2.1]{dang2023} for details. 
Thus, by Proposition \ref{Limit_exact}, the exact solution 
$\{X^x_t\}_{t\ge 0}$ admits a unique invariant measure $\mu$ and fulfills the strong LLN and the CLT.

Consider the full discretization $\{Y^{x^{h},\Delta}_{t_i}\}_{i\in\mathbb N}$, whose spatial direction is based on the spectral Galerkin method and temporal direction is the exponential Euler method,
\begin{align*}
Y^{x^h,\Delta}_{t_{k+1}}=S^h(\tau)(Y^{x^h,\Delta}_{t_k}+P^hF(Y^{x^h,\Delta}_{t_k})\tau+\delta W^h_k),\quad Y^{x^h,\Delta}_0=x^h,
\end{align*}
where $h:=\frac{1}{N}$ with $N\in\mathbb N_+$,  $P^h$ is the spectral Galerkin projection, $\delta W_k^h=P^h(W(t_{k+1})-W(t_k))$, and $ S^h(\tau):=e^{A^h\tau}$ with $A^h:=P^hA.$
For this numerical discretization, 
\begin{itemize}
\item[(Na)] Assumption \ref{a2} (\romannumeral1) is satisfied with $\tilde q=1$ and any $q\ge 2$;
\item[(Nb)] Assumption \ref{a2} (\romannumeral2) is satisfied with $\kappa=0$, $\rho^{\Delta}(t)=e^{-ct}$ for some $c>0$, and any $\gamma_2\in(0,1];$
\item[(Nc)] Assumption \ref{a3} holds with $|\Delta^{\alpha}|=\tau^{\frac{\beta_1}{2}}+h^{\beta_1};$
\end{itemize}
see  \cite[Propositions 2.3--2.5]{dang2023}.
It follows from Theorem \ref{LLNle} that the strong LLN holds: for any $p\ge1,\,\gamma\in(0,1],$
\begin{align}\label{lln_spde1}
\lim_{|\Delta|\to 0}\lim_{k\to\infty}\frac1k S^{x^h,\Delta}_k=\mu(f)\quad \text{a.s.}\quad \forall\; f\in\mathcal C_{p,\gamma}.
\end{align}
Combining Theorem \ref{CLTle}, we have that for $h=\tau^{\alpha_2/\alpha_1}=\tau^{\frac12}$, any $p\ge 1,\,\gamma\in(0,1],$ $k=\lceil \tau^{-1-2\lambda}\rceil$ with $\lambda\in(0,\frac{\beta_1}{2}\gamma )$,
 \begin{align}\label{clt_spde1}
\frac{1}{\sqrt{k\tau}}\sum_{i=0}^{k-1}\big(f(Y^{x^h,\Delta}_{t_i})-\mu(f)\big)\tau\overset d\longrightarrow\mathscr{N}(0,v^2)\quad\text{ as }\tau\to0\quad \forall\; f\in\mathcal C_{p,\gamma}.
\end{align}  

As we stated in Remark \ref{rmk3.2}, when we use the weak convergence of the full discretization, Theorems \ref{LLNle} and \ref{CLTle} hold for some class of test functionals.  
It follows  from \cite[Remark 2.6]{dang2023} that (Nc) can be replaced as
\begin{itemize}
\item[(Nc')]
 Remark \ref{rmk3.2} is satisfied with $\sup_{t\ge0}|\mathbb E[f(X^x_{t})-f(Y^{x^h,\Delta}_{t})]|\leq K(\tau^{\beta_1-}+h^{2\beta_1-})$ for $f\in\widetilde{\mathcal C}^2_b.$
\end{itemize}
Thus, we obtain that for $k=\lceil \tau^{-1-2\lambda}\rceil$ with $\lambda\in(0,\beta_1)$,
\begin{align}\label{clt_spde2}
\frac{1}{\sqrt{k\tau}}\sum_{i=0}^{k-1}\big(f(Y^{x^h,\Delta}_{t_i})-\mu(f)\big)\tau\overset d\longrightarrow\mathscr{N}(0,v^2)\quad\text{ as }\tau\to0\quad \forall\; f\in\widetilde {\mathcal C}_{b}^2\cap\mathcal C_{p,\gamma}.
\end{align}  
Here, $\widetilde {\mathcal C}_{b}^2\cap\mathcal C_{p,\gamma}=\widetilde{\mathcal C}_{b}^2$ when $p\ge 2$.
This weaken the condition $f\in\mathcal C^4_b$ in \cite[Theorem 3.1]{dang2023} to $f\in\widetilde {\mathcal C}^2_b.$

Moreover, for infinite dimensional SEEs with superlinearly growing coefficients, for instance the stochastic Allen--Cahn equation, results in \cite{dang2023} are not applicable due to that  coefficients do not satisfy the globally Lipschitz condition.  
By the use of our main results, 
 probabilistic limit behaviors including the strong LLN and the CLT of numerical discretizations of the stochastic Allen--Cahn equation 
can also be obtained. To be specific, 
consider \eqref{ex_spde} with $F$ satisfying  the dissipative condition $\langle X-Y,F(X)-F(Y)\rangle\leq \lambda_F\|X-Y\|^2$ with $\lambda_F<\lambda_1,$ where $\lambda_1$ is the smallest eigenvalue of $-A,$ e.g., $F(X)(\xi)=f(X(\xi))=X(\xi)-X(\xi)^3$. 
Similarly to the proof of \cite[Lemma 2]{cui2021}, we can obtain that $$\sup_{t\ge 0}\mathbb E[\|X^x_t\|^r]\leq K(1+\|x\|^{\tilde r  r})\quad \forall\; r\ge 2$$ with some fixed constant $\tilde r.$
And it follows from the It\^o formula that for any $p\ge 2,$
\begin{align*}
\mathbb E[\|X_t^{x_1}-X_t^{x_2}\|^p]\leq \mathbb E[\|x_1-x_2\|^p]e^{-p(\lambda_1-\lambda_F)t}.
\end{align*}
Hence, we derive that 
\begin{itemize}
\item[(SEa)] Assumption \ref{a1} is satisfied with any $r\ge 1$ and  some fixed constant $\tilde r\ge 1;$
\item[(SEb)] Assumption \ref{a1} is satisfied with $\beta=0,$ $\rho(t)=e^{-ct}$ with some $c>0$, and any $\gamma_1\in(0,1]$.
\end{itemize}
Thus by Proposition \ref{Limit_exact}, the exact solution 
$\{X^x_t\}_{t\ge 0}$ admits a unique invariant measure $\mu$ and fulfills the strong LLN and the CLT.

For the numerical method considered in \cite{cui2021}, i.e., the full discretization with spectral Galerkin method in space   and BEM method in time, 
\begin{align}\label{scheme_spde}
Y^{x^h,\Delta}_{t_{k+1}}=Y^{x^h,\Delta}_{t_k}+A^hY^{x^h,\Delta}_{t_{k+1}}\tau+P^hF(Y^{x^h,\Delta}_{t_{k+1}})\tau+\delta W^h_k,\quad Y^{x^h,\Delta}_0=x^h,
\end{align}
we can also derive $\mathbb E[\|Y^{x_1^{h},\Delta}_{t_{k+1}}-Y^{x_2^{h},\Delta}_{t_{k+1}}\|^p]\leq \mathbb E[\|x_1-x_2\|^p]e^{-p(\lambda_1-\lambda_F)t_{k+1}}$. We claim that the time-independent strong convergence of the full discretization is as follow 
\begin{align}\label{ex_spde_order}
\sup_{t\ge 0}\|X^x_{t}-Y^{x^h,\Delta}_{t}\|_{L^2(\Omega;E)}\leq K(\tau^{\frac{\beta_1}{2}}+h^{\beta_1});
\end{align}
see Appendix \ref{spdeorder} for the proof.  Hence,
\begin{itemize}
\item[(SNa)] Assumption \ref{a2} (\romannumeral1) is satisfied with any $q\ge 1$ and some fixed $\tilde q\ge 1$ (see \cite[Lemma 4]{cui2021} for details);
\item[(SNb)] Assumption \ref{a2} (\romannumeral2) is satisfied with $\kappa=0,$ $\rho^{\Delta}(t)=e^{-ct}$ with some $c>0$, and any $\gamma_2\in(0,1];$
\item[(SNc)] Assumption \ref{a3} is satisfied with $|\Delta^{\alpha}|=\tau^{\frac{\beta_1}{2}}+h^{\beta_1}.$
\end{itemize}
Applying Theorems \ref{LLNle} and \ref{CLTle} gives that the numerical discretization \eqref{scheme_spde} satisfies \eqref{lln_spde1} and \eqref{clt_spde1}, respectively. 

By using the weak error of the numerical discretization \eqref{scheme_spde}, (SNc) can be replaced by
\begin{itemize}
\item[(SNc')] Remark \ref{rmk3.2} is satisfied with $\sup_{t\ge 0}|\mathbb E[f(X^x_{t})-f(Y^{x^h,\Delta}_{t})]|\leq K(\tau^{\beta_1-}+h^{2\beta_1-})$ for $f\in\widetilde{\mathcal C}^2_b;$
\end{itemize}
see \cite[Corollary 1]{cui2021} for details. 
This implies that the numerical discretization \eqref{scheme_spde} satisfies 
 \eqref{clt_spde2} for $k=\lceil \tau^{-1-2\lambda}\rceil$ with $\lambda\in(0,\beta_1).$

\subsubsection{SFDEs}

The probabilistic limit behaviors of numerical discretizations for the functional solution of SFDEs is one of the stochastic systems that motivates this study. This type of equations has been well-studied in the literature; see e.g. \cite{BaoDCDS} and references therein. 
However, to the best of our knowledge, 
 less is known for probabilistic limit behaviors of numerical discretizations of SFDEs.
 Thus, we focus on the probabilistic limit behaviors of the EM method of the SFDE by verifying that  Assumptions \ref{a2}--\ref{a3} are satisfied. Denote by 
$\mathcal C([-\delta_0, 0];~\mathbb{R}^{d})$   the space of all continuous functions
 $\phi(\cdot)$ from $[-\delta_0,0]$ to $\RR^d$ equipped with the  norm $\|\phi\|
=\sup_{-\delta_0\leq\theta\leq0}|\phi(\theta)|$.

The  SFDE on $E=\mathcal C([-\delta_0,0]; \RR^d)$ has the following form
\begin{align}\label{sfde}
\mathrm dX^{x}(t)=b(X^{x}_t)\mathrm dt+\sigma (X^{x}_t)\mathrm dW(t), \quad X^{x}_0= x\in E,
\end{align}
where the delay $\delta_0>0$, $\{W(t),\,t\ge 0\}$ is a $D$-dimensional standard Brownian motion on a filtered probability space $(\Omega,\mathcal F,\{\mathcal F_t\}_{t\ge 0},\mathbb P)$, the initial datum $x(\cdot)$ is H\"older continuous,
the coefficient $b:E \to\mathbb R^d$ is Lipschitz continuous and satisfies the dissipative condition, and  $\sigma: E\to\mathbb R^{d\times D}$ is bounded and  Lipschitz continuous; see Appendix \ref{sfdeproof} for detailed assumptions. 
The functional solution $X^x_t:\theta\mapsto X^x(t+\theta)$ is $E$-valued random variable for $t\geq 0$.
For this equation, 
\begin{itemize}
\item[(Ea)] Assumption \ref{a1} (\romannumeral1) is satisfied with $\tilde r=1$ and any $r\ge 2$ (see \cite[Lemma 3.1]{BaoDCDS} for details);
\item[(Eb)] Assumption \ref{a1} (\romannumeral2) is satisfied with $\beta=0,$  $\rho(t)=e^{-ct}$ for some $c>0$, and any $\gamma_1\in(0,1].$
\end{itemize}
To simplify the representation, we use the result given in  
\cite{BaoNA}, where the approximation of the invariant measure of the EM method of the SFDE
with the Markov chain is investigated. By means of the result for the case without the Markov chain (see \cite[Lemma 2.1]{BaoNA}), we obtain (Eb), whose proof is omitted. 
 Thus, by Proposition \ref{Limit_exact}, the exact functional solution 
$\{X^x_t\}_{t\ge 0}$ admits a unique invariant measure $\mu$ and fulfills the strong LLN and the CLT.

Without loss of generality, we assume
that there exists an integer $N\geq\delta_0$  sufficiently large such that $\tau=\frac{\delta_0}{N}\in(0,1]$.  
Let $t_k=k\tau$ for $k=-N,-N+1,\ldots$ 
The $E$-valued numerical discretization $\{Y^{x^h,\Delta}_{t_k}\}_{k\in\mathbb N}$ based on the EM method associated with \eqref{sfde} is defined by 
\begin{align}
&Y^{x^{h},\Delta}_{t_k}(\theta)=\frac{t_{j+1}-\theta}{\tau}y^{x,\tau}(t_{k+j})
+\frac{\theta-t_{j}}{\tau}y^{x,\tau}(t_{k+j+1}),
~~~\theta\in[t_j,t_{j+1}],~~~j\in\{-N,\ldots,-1\},\label{thL}
\end{align}
where $x^h$ is the linear interpolation of $x(t_{-N}),x(t_{-N+1}),\ldots, x(0)$,
\begin{align}
&y^{x,\tau}(t_{k+1})=y^{x,\tau}(t_k)+b(Y^{x^{h},\Delta}_{t_k})\tau+\sigma(Y^{x^{h},\Delta}_{t_k}) \delta W_k,\quad k\in\mathbb N,\label{EMsfde}
\end{align}
 and $y^{x,\tau}(\theta)=x(\theta)$ for $\theta\in[-\delta_0,0]$.
For the  numerical discretization $\{Y^{x^{h},\Delta}_{t_k}\}_{k\in\mathbb N}$, we  derive that for any $\tau\in(0,\tilde\tau]$ and $q\geq 2$,
\begin{align}\label{sfdebound}
\sup_{k\geq 0}\E[\|Y^{x^{h},\Delta}_{t_{k}}\|^q]\leq K(1+\|x\|^q)
\end{align}
and 
\begin{align}\label{sfdecov}
\sup_{t\geq 0}\mathbb E[\|X^{x}_{t}-Y^{x^{h},\Delta}_{t}\|^2]
\leq K(1+\|x\|^2)\tau;
\end{align}
see Appendix \ref{sfdeproof} for the proofs.
Hence, 
\begin{itemize}
\item[(Na)] Assumption \ref{a2} (\romannumeral1) is satisfied with $\tilde q=1$ and any $q\ge 2$;
\item[(Nb)] Assumption \ref{a2} (\romannumeral2) is satisfied with $\kappa=0$, $\rho^{\Delta}(t)=e^{-ct}$ for some $c>0$, and any $\gamma_2\in(0,1]$ (see \cite[Lemma 4.2]{BaoNA} for the case without the Markov chain);
\item[(Nc)] Assumption \ref{a3} holds with $|\Delta^{\alpha}|=\tau^{\frac{1}{2}}$.
\end{itemize}
It follows from Theorem \ref{LLNle} that the strong LLN holds: for any $p\ge1,\,\gamma\in(0,1],$
\begin{align*}
\lim_{|\Delta|\to 0}\lim_{k\to\infty}\frac1k S^{x^h,\Delta}_k=\mu(f)\quad \text{a.s.}\quad \forall\; f\in\mathcal C_{p,\gamma}.
\end{align*}
Combining Theorem \ref{CLTle}, we have that for any $p\ge 1,\,\gamma\in(0,1],$ $k=\lceil \tau^{-1-2\lambda}\rceil$ with $\lambda\in(0,\frac{1}{2}\gamma)$,
 \begin{align*}
\frac{1}{\sqrt{k\tau}}\sum_{i=0}^{k-1}\big(f(Y^{x^h,\Delta}_{t_i})-\mu(f)\big)\tau\overset d\longrightarrow\mathscr{N}(0,v^2)\quad\text{ as }\tau\to0\quad \forall\; f\in\mathcal C_{p,\gamma}.
\end{align*}  

\section{Proof of the numerical strong LLN}\label{sec_LLN}
This section is devoted to giving the proof of the strong LLN based on the existence and uniqueness as well as the convergence of the numerical invariant measure.

\subsection{Numerical invariant measure}
In this subsection, we show the existence and uniqueness of the numerical invariant measure. Then it is shown that the numerical invariant measure  converges weakly to $\mu$ with a certain order.  
\begin{prop}\label{prop_IM}
 Under Assumption \textup{\ref{a2}}, for 
 each fixed $\Delta,$
$\{Y^{x^h,\Delta}_{t_k}\}_{k\in\mathbb N}$ admits a unique numerical invariant measure $\mu^{\Delta}\in\mathcal P(E^h)$. 
 \end{prop}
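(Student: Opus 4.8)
The plan is to avoid any compactness or Krylov--Bogoliubov tightness argument (which would be problematic when $E^h=E$ is infinite dimensional), and instead to extract the invariant measure directly as the limit of a Cauchy sequence, using the contractivity encoded in Assumption \ref{a2}\,(ii). I would work throughout in the metric space $(\mathcal P(E^h),\mathbb{W}_2)$, where $\mathbb{W}_2$ is the bounded-Wasserstein distance of the preliminaries. Two standing facts are used: first, since $1\wedge\|\cdot\|$ is a bounded metric on $E^h$ inducing the original topology and $1\wedge\|u_1-u_2\|^2=(1\wedge\|u_1-u_2\|)^2$, the Kantorovich--Rubinstein duality together with the elementary sandwich $\mathbb{W}_1\le\mathbb{W}_2\le\mathbb{W}_1^{1/2}$ shows that $\mathbb{W}_2$ metrizes weak convergence and makes $\mathcal P(E^h)$ a complete metric space; second, $E^h$ is closed in $E$ (finite dimensional in the full-discretization case, and $E^h=E$ otherwise), so any weak limit of measures carried by $E^h$ is again carried by $E^h$. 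I record at the outset that the summability $\tau\sum_{k}(\rho^\Delta(t_k))^{\gamma_2}<\infty$ forces $\rho^\Delta(t_k)\to 0$ as $k\to\infty$ for the fixed step-size $\tau$.

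For existence, I would fix $x^h$ and prove that $\{\mu^{x^h,\Delta}_{t_k}\}_{k}$ is $\mathbb{W}_2$-Cauchy. The Markov property and time-homogeneity give $\mu^{x^h,\Delta}_{t_{k+m}}=\int_{E^h}\mu^{u,\Delta}_{t_k}\,\mathrm d\mu^{x^h,\Delta}_{t_m}(u)$, while $\mu^{x^h,\Delta}_{t_k}$ is the same integrand held constant in $u$. By convexity of the transport cost under mixtures, $\mathbb{W}_2(\mu^{x^h,\Delta}_{t_k},\mu^{x^h,\Delta}_{t_{k+m}})^2\le\int_{E^h}\mathbb{W}_2(\mu^{x^h,\Delta}_{t_k},\mu^{u,\Delta}_{t_k})^2\,\mathrm d\mu^{x^h,\Delta}_{t_m}(u)$. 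Bounding each inner term by the synchronous coupling and Assumption \ref{a2}\,(ii), then integrating in $u$ and absorbing the factor $(1+\|x^h\|^{\kappa}+\|u\|^{\kappa})^2$ via the uniform moment bound of Assumption \ref{a2}\,(i) (with $q$ large enough, e.g. $q\ge 2(1+\kappa)$, which is harmless by the standing convention), yields $\mathbb{W}_2(\mu^{x^h,\Delta}_{t_k},\mu^{x^h,\Delta}_{t_{k+m}})\le K\rho^\Delta(t_k)$ with $K$ independent of $m$. Since $\rho^\Delta(t_k)\to 0$, the sequence is Cauchy and converges weakly to some $\mu^\Delta\in\mathcal P(E^h)$. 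Invariance then follows by passing to the limit in $\mu^{x^h,\Delta}_{t_{k+1}}=(P^\Delta_{t_1})^{\ast}\mu^{x^h,\Delta}_{t_k}$: Assumption \ref{a2}\,(ii) with $k=1$ shows $P^\Delta_{t_1}$ maps $\mathcal C_b$ into $\mathcal C_b$ (the Feller property), so for every $f\in\mathcal C_b$ one has $\mu^\Delta(P^\Delta_{t_1}f)=\lim_k\mu^{x^h,\Delta}_{t_k}(P^\Delta_{t_1}f)=\lim_k\mu^{x^h,\Delta}_{t_{k+1}}(f)=\mu^\Delta(f)$.

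For uniqueness, I would first note that the limit is independent of $x^h$: by the synchronous coupling and Assumption \ref{a2}\,(ii), $\mathbb{W}_2(\mu^{x^h,\Delta}_{t_k},\mu^{y^h,\Delta}_{t_k})\le L_4\|x^h-y^h\|(1+\|x^h\|^{\kappa}+\|y^h\|^{\kappa})\rho^\Delta(t_k)\to 0$, so all starting points yield the same $\mu^\Delta$. If $\nu$ is any invariant measure, then $\nu=\int_{E^h}\mu^{y,\Delta}_{t_k}\,\mathrm d\nu(y)$ for every $k$, and convexity of the transport cost gives $\mathbb{W}_2(\nu,\mu^\Delta)^2\le\int_{E^h}\mathbb{W}_2(\mu^{y,\Delta}_{t_k},\mu^\Delta)^2\,\mathrm d\nu(y)$. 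Each integrand tends to $0$ as $k\to\infty$ and is dominated by the constant $1$ (since $\mathbb{W}_2\le 1$), so dominated convergence forces $\mathbb{W}_2(\nu,\mu^\Delta)=0$, i.e. $\nu=\mu^\Delta$.

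The main obstacle I anticipate lies in the functional-analytic setting rather than the estimates: making existence work without compactness, which is precisely why the contractivity in Assumption \ref{a2}\,(ii) is essential, since it is what turns $\{\mu^{x^h,\Delta}_{t_k}\}$ into a Cauchy sequence and thereby replaces the usual tightness step. The remaining care is bookkeeping: ensuring the constant $K$ in the Cauchy estimate is uniform in $m$ (guaranteed by the $k$-uniform moment bound (i)) and that $q$ is chosen large enough to control the weight $(1+\|u\|^{\kappa})^2$ after integration. A minor point worth checking carefully is the convexity of the bounded transport cost under mixtures and the equivalence of $\mathbb{W}_2$ with weak convergence, both of which follow from the boundedness of the underlying metric.
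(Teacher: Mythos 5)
Your proof is correct, and its core estimate coincides with the paper's: both arguments convert the strong mixing of Assumption \ref{a2} (ii), combined with the Markov property and the moment bound (i), into the statement that $\{\mu^{x^h,\Delta}_{t_k}\}_{k}$ is Cauchy in a bounded-Wasserstein metric, with modulus $K(1+\|x^h\|^{\cdot})\rho^{\Delta}(t_k)\to 0$. The differences are in how the three steps are finished. First, the paper works in $(\mathcal P(E^h),\mathbb W_1)$ and obtains the Cauchy bound from the Kantorovich--Rubinstein duality with bounded Lipschitz test functions, which only integrates pointwise-in-$y$ coupling bounds and therefore needs no measurable selection; you work in $\mathbb W_2$ and invoke convexity of the transport cost under mixtures plus the synchronous coupling, which is fine but rests on the standard measurable-selection-of-couplings fact (e.g. \cite[Theorem 4.8]{Villani}) that you should cite explicitly. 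Second, for existence the paper, having tightness from its Step 1, invokes the Krylov--Bogoliubov theorem together with the Feller property of $P^{\Delta}_{t_k}$; you instead take the Cauchy limit $\mu^{\Delta}$ directly and verify invariance by passing to the limit in $\mu^{x^h,\Delta}_{t_{k+1}}=(P^{\Delta}_{t_1})^{*}\mu^{x^h,\Delta}_{t_k}$, again via Feller. Your route is more self-contained and, as you note, never touches compactness, though the paper's appeal to Krylov--Bogoliubov is equally legitimate once tightness is in hand. Third, on uniqueness your write-up is in fact more complete than the paper's: the paper shows the limit is independent of the initial datum (its Step 3) and refers uniqueness back to Step 1, which strictly speaking only identifies limits of the sequences $\mu^{x^h,\Delta}_{t_k}$; you additionally take an arbitrary invariant measure $\nu$, write $\nu=\int_{E^h}\mu^{y,\Delta}_{t_k}\,\mathrm d\nu(y)$, and conclude $\nu=\mu^{\Delta}$ by dominated convergence (the integrand being bounded by $1$ since the cost is bounded), which is exactly the argument needed to exclude invariant measures that do not arise as such limits. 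Your parameter bookkeeping, namely requiring $q\ge 2(1+\kappa)$ to absorb the weight after squaring the cost, is slightly more demanding than the paper's $q\ge 1+\kappa$ but is covered by the standing convention that $q$ is taken large enough.
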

\begin{proof}
We split the proof into the following three steps. 

\textit{Step 1: Prove that for each fixed $x^h\in E^h,$ the family $\{\mu_{t_k}^{x^h,\Delta}\}_{k=0}^{\infty}$ is tight, and the  limit is unique  in $(\mathcal P(E^h),\mathbb W_1)$. } 
By virtue of \cite[Lemma 6.14]{Villani}, it suffices to prove that $\{\mu_{t_k}^{x^h,\Delta}\}_{k=0}^{\infty}$ is a Cauchy sequence  in the Polish space $(\mathcal P(E^h), \mathbb{W}_1)$. 
In fact, for any $k,i\in\mathbb N$, applying \cite[Remark 6.5]{Villani} leads to
\begin{align}
&\mathbb{W}_1(\mu_{t_{k+i}}^{x^h,\Delta},\,\mu_{t_k}^{x^h,\Delta})
=\sup_{\|\Psi\|_{Lip}\vee\|\Psi\|_{\infty}\leq1}\Big|\int_{E^h}\Psi(u)\mathrm d\mu_{t_{k+i}}^{x^h,\Delta}(u)-\int_{E^h}\Psi(u)\mathrm d\mu_{t_k}^{x^h,\Delta}(u)\Big|,
\end{align}
where $\Psi:E^h\rightarrow\RR$ is bounded and Lipschitz continuous, and  norms $\|\cdot\|_{Lip}$  and $\|\cdot\|_{\infty}$ are defined by
$\|\Psi\|_{Lip}:=\sup_{\substack{u_1,u_2\in E^h\\u_1\neq u_2}}\frac{|\Psi(u_1)-\Psi(u_2)|}{\|u_1-u_2\|}$ and $\|\Psi\|_{\infty}:=\sup_{u\in E^h}|\Psi(u)|,$ respectively.
By the definition of $\mu_{\cdot}^{x^h,\Delta}$, the time-homogeneous Markov property of $\{Y^{x^h,\Delta}_{t_{k}}\}_{k=0}^{\infty}$, and Assumption \ref{a2},  we deduce that 
\begin{align}
\mathbb{W}_1(\mu_{t_{k+i}}^{x^h,\Delta},\mu_{t_k}^{x^h,\Delta})
=&\sup_{\|\Psi\|_{Lip}\vee\|\Psi\|_{\infty}\leq1}|\E[\Psi(Y^{x^h,\Delta}_{t_{k+i}})]-\E[\Psi(Y^{x^h,\Delta}_{t_{k}})]|\nn\\
=&\sup_{\|\Psi\|_{Lip}\vee\|\Psi\|_{\infty}\leq1}\Big|\E\Big[\E\big[\Psi(Y^{x^h,\Delta}_{t_{k+i}})|\mathcal F_{t_i}\big]\Big]-\E[\Psi(Y^{x^h,\Delta}_{t_{k}})]\Big|\nn\\
=&\sup_{\|\Psi\|_{Lip}\vee\|\Psi\|_{\infty}\leq1}\Big|\E\Big[\E\big[\Psi(Y^{y^h,\Delta}_{t_{k}})-\Psi(Y^{x^h,\Delta}_{t_{k}})\big]\Big|_{y^h=Y^{x^h,\Delta}_{t_{i}}}\Big]\Big|\nn\\
\leq&~K\E\Big[\|Y^{x^h,\Delta}_{t_{i}}-x^h\|\big(1+\|Y^{x^h,\Delta}_{t_{i}}\|^{\kappa}+\|x^h\|^{\kappa}\big)\Big]\rho^{\Delta}(t_k)\nn\\
\leq&~K(1+\E[\|Y^{x^h,\Delta}_{t_{i}}\|^{1+\kappa}]+\|x^h\|^{1+\kappa})\rho^{\Delta}(t_k)\nn\\
\leq&~K(1+\|x^h\|^{\tilde q(1+\kappa)})\rho^{\Delta}(t_k),
\end{align}
which together with $\lim_{k\rightarrow\infty}\rho^{\Delta}(t_k)=0$ finishes the proof of \textit{Step 1}.

\textit{Step 2:  Prove that for each $x^h\in E^h,$ $\{Y^{x^h,\Delta}_{t_k}\}_{k\in\mathbb N}$ admits a unique invariant measure which is denoted by $\mu^{x^h,\Delta}.$}  The uniqueness of the invariant measure  is proved in \textit{Step 1}. For the proof of the existence, based on the Krylov--Bogoliubov theorem 
\cite[Theorem 3.1.1]{DaPrato1996},
it suffices  to show that  $P_{t_k}^{\Delta}$ is Feller, i.e., the mapping
$x^h\rightarrow  P_{t_k}^{\Delta}\Phi(x^h)=\E[\Phi(Y^{x^h,\Delta}_{t_k})]$ 
is bounded and continuous for any $\Phi\in\mathcal C_b$. This can be derived by the use of 
Assumption \ref{a2} (\romannumeral2). 

\textit{Step 3: Prove that for any $x^h\neq y^h,$ $\mu^{x^h,\Delta}=\mu^{y^h,\Delta}(=:\mu^{\Delta}).$} This can be obtained by
\begin{align*}
&\mathbb{W}_1(\mu^{x^h,\Delta},\mu^{y^h,\Delta})\nn\\
\leq&{} \lim_{k\rightarrow\infty} \mathbb{W}_1(\mu^{x^h,\Delta},\mu_{t_{k}}^{x^h,\Delta})+\lim_{k\rightarrow\infty}\mathbb{W}_1(\mu_{t_{k}}^{x^h,\Delta},\mu^{y^h,\Delta}_{t_k})+\lim_{k\rightarrow\infty}\mathbb{W}_1(\mu_{t_{k}}^{y^h,\Delta},\mu^{y^h,\Delta})\nn\\
\leq&{} \lim_{k\rightarrow\infty}\|x^h-y^h\|(1+\|x^h\|^{\kappa}+\|y^h\|^{\kappa})\rho^{\Delta}(t_k)=0.
\end{align*}

Combining \textit{Steps 1--3} finishes the proof. 
\end{proof}

\begin{rem}\label{r2.1}
It follows from Assumption \textup{\ref{a2}}  that 
\begin{align}\mu^{\Delta}\big(\|\cdot\|^q\big)&\leq \lim_{t\rightarrow\infty}\int_{E^h} \|u\|^q\mathrm d\mu^{\textbf 0,\Delta}_t(u)
\leq \sup_{t\geq0}\E [\|Y^{\textbf 0,\Delta}_t\|^q]\leq L_{3}.\label{trmub}
\end{align}
Let $p\in[1,q]$ and $\gamma\in(0,1].$ By \eqref{5.4} and Assumption \textup{\ref{a2}} \textup{(i)}, we obtain that for any $u\in E^h$ and $f\in \mathcal C_{p,\gamma}$,
\begin{align*}
|P^{\Delta}_{t_k}f(u)-\mu^{\Delta}(f)|
\leq&\; K\|f\|_{p,\gamma} (1+\E[\|Y^{u,\Delta}_{t_k}\|^{p}]+\mu^{\Delta}(\|\cdot\|^p))^{\frac{1}{2}}(\mathbb W_{2}(\mu^{u,\Delta}_{t_k},\mu^{\Delta}))^{\gamma}\nn\\
\leq&\; K\|f\|_{p,\gamma} (1+\|u\|^{\frac{p\tilde q}{2}})(\mathbb W_{2}(\mu^{u,\Delta}_{t_k},\mu^{\Delta}))^{\gamma}.
\end{align*}
It follows from $(P^{\Delta}_{t_k})^*\mu^{\Delta}=\mu^{\Delta}$ and  Assumption \textup{\ref{a2}}  that
\begin{align}\label{Wtau}
\mathbb W_{2}(\mu^{u,\Delta}_{t_k},\mu^{\Delta})\leq&~{}\E\Big[\big(\E[\|Y^{u,\Delta}_{t_k}-Y^{y,\Delta}_{t_k}\|^2]\big)^{\frac{1}{2}}\big|_{y=\xi}\Big]\nn\\
\leq&~{}L_4\E\big[\|u-\xi\|(1+\|u\|^{\kappa}+\|\xi\|^{\kappa})\big]\rho^{\Delta}(t_k)\nn\\
\leq&~{}K(1+\|u\|^{1+\kappa})\rho^{\Delta}(t_k),
\end{align}
where $\xi$ is an $E^h$-valued random variable with $\mu^{\Delta}=\PP\circ \xi^{-1}$. Hence, we have 
\begin{align}
|P_{t_k}^{\Delta} f(u)-\mu^{\Delta}(f)|
&\leq K\|f\|_{p,\gamma}(1+\|u\|^{\frac{p\tilde q}{2}+(1+\kappa)\gamma})(\rho^{\Delta}(t_k))^{\gamma},\quad k\in\mathbb N.\label{trmix1}
\end{align}
Similar to the proof of \eqref{trmix1}, we   derive  that for any $u_1,u_2\in E^h,$
\begin{align}
&|P_{t_k}^{\Delta}f(u_1)-P_{t_k}^{\Delta}f(u_2)|\nn\\
&\leq{} K\|f\|_{p,\gamma}\|u_1-u_2\|^{\gamma}(1+\|u_1\|^{\frac{p\tilde q}{2}+\gamma\kappa}+\|u_2\|^{\frac{p\tilde q}{2}+\gamma\kappa})(\rho^{\Delta}(t_k))^{\gamma}.\label{Yattr1}
\end{align}
\end{rem}

\begin{prop}\label{Worder}
Under Assumptions \textup{\ref{a1}--\ref{a3}}, we have 
$
\mathbb W_2(\mu,\mu^{\Delta})\leq K|\Delta^{\a}|.
$
\end{prop}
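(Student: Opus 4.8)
The plan is to compare $\mu$ and $\mu^{\Delta}$ by inserting, at a large time $t_k$, the laws of the exact process and of the scheme started from a common fixed initial datum, and then to let $k\to\infty$. Fix $x=\mathbf 0\in E^h$ (so that $x^h=x$ and all the moment factors collapse), and use that $\mathbb W_2$ is a genuine metric: since $1\wedge\|u_1-u_2\|^2=(1\wedge\|u_1-u_2\|)^2$, $\mathbb W_2$ is exactly the $L^2$-Wasserstein distance for the bounded metric $\bar d(u_1,u_2)=1\wedge\|u_1-u_2\|$, so the triangle inequality gives
\begin{align*}
\mathbb W_2(\mu,\mu^{\Delta})\le \mathbb W_2(\mu,\mu_{t_k}^{x})+\mathbb W_2(\mu_{t_k}^{x},\mu_{t_k}^{x^h,\Delta})+\mathbb W_2(\mu_{t_k}^{x^h,\Delta},\mu^{\Delta}).
\end{align*}
The three terms are to be controlled by ergodicity of the exact flow, the strong convergence of Assumption \ref{a3}, and ergodicity of the scheme, respectively.

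For the middle term, which carries the convergence order, I would use the synchronous coupling obtained by running $X^x$ and $Y^{x^h,\Delta}$ driven by the same noise from the common starting point $x$. This is an admissible joint law in the definition of $\mathbb W_2$, so
\begin{align*}
\mathbb W_2(\mu_{t_k}^{x},\mu_{t_k}^{x^h,\Delta})^2\le \E\big[1\wedge\|X_{t_k}^{x}-Y_{t_k}^{x^h,\Delta}\|^2\big]\le \E\big[\|X_{t_k}^{x}-Y_{t_k}^{x^h,\Delta}\|^2\big]\le L_5^2(1+\|x\|^{\tilde r\vee\tilde q})^2|\Delta^{\a}|^2
\end{align*}
by Assumption \ref{a3} (applied at $t=t_k$, where $Y_{t_k}^{x^h,\Delta}$ is the grid value). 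With $x=\mathbf 0$ this yields $\mathbb W_2(\mu_{t_k}^{x},\mu_{t_k}^{x^h,\Delta})\le L_5|\Delta^{\a}|$, uniformly in $k$.

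For the two end terms I would show they vanish as $k\to\infty$. The last one is immediate from Remark \ref{r2.1}: by \eqref{Wtau}, $\mathbb W_2(\mu_{t_k}^{x^h,\Delta},\mu^{\Delta})\le K(1+\|x\|^{1+\kappa})\rho^{\Delta}(t_k)$, and for fixed $\Delta$ the summability $\tau\sum_k(\rho^{\Delta}(t_k))^{\gamma_2}<\infty$ in Assumption \ref{a2}(ii) forces $\rho^{\Delta}(t_k)\to0$. For the first term I would establish the exact-flow analogue: coupling $X^x$ with $X^{\eta}$ for $\eta\sim\mu$ and applying Assumption \ref{a1}(ii) conditionally gives $\mathbb W_2(\mu_{t_k}^{x},\mu)\le K(1+\|x\|^{1+\beta})\rho(t_k)$, which is finite because $\mu(\|\cdot\|^{2+2\beta})<\infty$: the uniform-in-time moment bound of Assumption \ref{a1}(i) passes to $\mu$ exactly as in \eqref{trmub}, and $2+2\beta\le r$ since $r$ is taken large enough.

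The hard part is that, unlike the scheme, the exact rate $\rho$ is only assumed integrable in the $\gamma_1$-power, not monotone, so $\rho(t_k)\to0$ along the full grid is not granted a priori. I would bypass this by noting that $\int_0^{\infty}\rho^{\gamma_1}(t)\,\mathrm dt<\infty$ with $\rho$ continuous and nonnegative forces $\liminf_{t\to\infty}\rho(t)=0$, so there is a subsequence $t_{k_j}\to\infty$ with $\rho(t_{k_j})\to0$. Because the left-hand side $\mathbb W_2(\mu,\mu^{\Delta})$ does not depend on $k$, I would pass to the limit along $k_j$ (the third term still tends to $0$ along any subsequence, being controlled for the full sequence); the two end terms then drop out and leave $\mathbb W_2(\mu,\mu^{\Delta})\le L_5|\Delta^{\a}|$, which is the claim with $K=L_5$.
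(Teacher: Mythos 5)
Your proof is correct and takes essentially the same route as the paper: the triangle inequality for $\mathbb W_2$ at time $t_k$ with initial datum $\mathbf 0$, the synchronous coupling together with Assumption \ref{a3} for the middle term, and the mixing estimates (the paper's \eqref{W} and \eqref{Wtau}) for the two end terms, followed by letting $k\to\infty$. Your additional step of passing to a subsequence $t_{k_j}$ with $\rho(t_{k_j})\to 0$ is a legitimate refinement rather than a different approach — the paper simply writes ``letting $t_k\to\infty$ finishes the proof,'' which tacitly uses $\rho(t_k)\to 0$, a fact not literally forced by continuity and integrability of $\rho^{\gamma_1}$ alone.
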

\begin{proof}
Similar to the proof of \eqref{Wtau}, one deduces  
\begin{align}\label{W}
\mathbb W_{2}(\mu^{x}_t,\mu)
\leq K(1+\|x\|^{1+\beta})\rho(t).
\end{align}
This, along with Assumptions \ref{a1}--\ref{a3} implies that 
\begin{align}\label{th4.2.1}
\mathbb W_2(\mu,\mu^{\Delta})
\leq&~ \mathbb W_2(\mu,\mu^{\bf 0}_{t_k})+\mathbb W_2(\mu^{\bf 0}_{t_k},\mu^{{\bf 0},\Delta}_{t_k})+\mathbb W_2(\mu^{{\bf 0},\Delta}_{t_k},\mu^{\Delta})\nn\\
\leq&~K\rho(t_k)+K\big(\E[\|X^{\bf 0}_{t_{k}}-Y^{{\bf 0},\Delta}_{t_{k}}\|^2]\big)^{\frac{1}{2}}+K\rho^{\Delta}(t_k)\nn\\
\leq&~ K(\rho(t_k)+|\Delta^{\alpha}|+\rho^{\Delta}(t_k)).
\end{align}
Letting $t_k\to\infty$ finishes the proof.
\end{proof}

\subsection{Proof of Theorem \ref{LLNle}} 
In this subsection, we give the proof of the strong LLN of numerical discretizations (i.e., Theorem \ref{LLNle}) based on the 
 following lemma which gives the estimate between  $\frac 1kS^{x^h,\Delta}_k$ and  $\mu^{\Delta}(f)$, whose proof 
is based on  \cite[Proposition 2.6 and Remark 2.7]{Shirikyan} and the strong mixing \eqref{trmix1}.

\begin{lemma}\label{l5.1}
Let Assumption \textup{\ref{a2}} hold, $\gamma\in[\gamma_2,1],$
 and $\varrho\in \mathbb N,\,p
\ge 1$ satisfy 
$\varrho\big(\frac{p}{2}(1+\tilde q)+(1+\kappa)\gamma\big)\leq q.$
Then for any $f\in\mathcal C_{p,\gamma}$, 
$x^h\in E^h$, 
we have
\begin{align}\label{l5.1_eq}
\E \big[\big|\frac{1}{k}S_{k}^{x^h,\Delta}-\mu^{\Delta}(f)\big|^{2\varrho}\big]\leq K\|f\|^{2\varrho}_{p,\gamma}(1+\|x^h\|^{\varrho\tilde q(\frac{p}{2}(1+\tilde q)+(1+\kappa)\gamma)})t_{k}^{-\varrho} \quad \forall \;k\in \mathbb N_+.
\end{align}
\end{lemma}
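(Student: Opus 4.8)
The plan is to center the summands and reduce the claim to a $2\varrho$-th moment bound for a partial sum of a strongly mixing sequence, and then to run the machinery behind \cite[Proposition 2.6 and Remark 2.7]{Shirikyan}. Write $g:=f-\mu^{\Delta}(f)$ and $\xi_i:=g(Y^{x^h,\Delta}_{t_i})$, so that $\frac1kS^{x^h,\Delta}_k-\mu^{\Delta}(f)=\frac1k\sum_{i=0}^{k-1}\xi_i$ and it suffices to estimate $\E[|\sum_{i=0}^{k-1}\xi_i|^{2\varrho}]$. There are two inputs. The first is the weighted mixing bound: since $P^{\Delta}_{t_m}$ maps constants to constants, one has $P^{\Delta}_{t_m}g=P^{\Delta}_{t_m}f-\mu^{\Delta}(f)$, so \eqref{trmix1} yields
$$|P^{\Delta}_{t_m}g(u)|\le K\|f\|_{p,\gamma}\big(1+\|u\|^{\frac{p\tilde q}{2}+(1+\kappa)\gamma}\big)(\rho^{\Delta}(t_m))^{\gamma},\quad u\in E^h.$$
The second is the uniform moment bound of Assumption \ref{a2} (\romannumeral1), which after interpolation gives $\sup_k\E[\|Y^{x^h,\Delta}_{t_k}\|^{s}]\le K(1+\|x^h\|^{\tilde q s})$ for every $s\le q$, together with the summability $\sum_{m\ge0}(\rho^{\Delta}(t_m))^{\gamma}\le K\tau^{-1}$ coming from Assumption \ref{a2} (\romannumeral2) and $\gamma\ge\gamma_2$.

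Next I would expand $\E[|\sum_{i=0}^{k-1}\xi_i|^{2\varrho}]=\sum\E[\xi_{i_1}\cdots\xi_{i_{2\varrho}}]$ and, after ordering $i_1\le\cdots\le i_{2\varrho}$ in each term, bound the mixed moment by conditioning successively on $\{\mathcal F_{t_i}\}$ and applying the mixing bound to the Markov increments. The contribution of a tuple is governed by the decay $(\rho^{\Delta})^{\gamma}$ accumulated over its well-separated gaps, while the polynomial weights produced at each conditioning step are collected and controlled by the generalized H\"older inequality together with the uniform moment bound. Here the hypothesis $\varrho\big(\frac{p}{2}(1+\tilde q)+(1+\kappa)\gamma\big)\le q$ is exactly what guarantees that the total power of $\|Y^{x^h,\Delta}_{t_{i}}\|$ appearing across all factors does not exceed $q$, so that the uniform moment bound applies and produces the factor $1+\|x^h\|^{\varrho\tilde q(\frac{p}{2}(1+\tilde q)+(1+\kappa)\gamma)}$; morally, each of the $\varrho$ effective pairings supplies one factor $\|Y\|^{\frac{p}{2}}$ from an observable and one weight $\|Y\|^{\frac{p\tilde q}{2}+(1+\kappa)\gamma}$ from a conditioning step.

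Finally I would carry out the summation over tuples. Granting the combinatorial estimate that the decay can be extracted over roughly $\varrho$ of the gaps simultaneously, the decay factors sum to $(\sum_m(\rho^{\Delta}(t_m))^{\gamma})^{\varrho}\le K\tau^{-\varrho}$ while the remaining free indices contribute $O(k^{\varrho})$, giving $\E[|\sum_{i=0}^{k-1}\xi_i|^{2\varrho}]\le K\|f\|^{2\varrho}_{p,\gamma}(1+\|x^h\|^{\varrho\tilde q(\frac{p}{2}(1+\tilde q)+(1+\kappa)\gamma)})(k\tau^{-1})^{\varrho}$. Dividing by $k^{2\varrho}$ and recalling $t_k=k\tau$ then yields the claimed $t_k^{-\varrho}$ decay.

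I expect the delicate part to be precisely this clustering/counting argument behind the power $\varrho$: naive single-gap peeling extracts decay over only one gap and therefore produces the power $1$, which is insufficient; one must show that non-clustered tuples are negligible and that the clustered ones number $O(k^{\varrho})$, all the while keeping the accumulated polynomial weight within the sharp budget dictated by the condition on $p$. This is exactly what the abstract estimate of \cite[Proposition 2.6 and Remark 2.7]{Shirikyan} is built to handle, so I would phrase the verification of the mixing and moment inputs above so as to match its hypotheses and invoke it directly.
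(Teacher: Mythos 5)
Your proposal is correct and takes essentially the same route as the paper: the paper's proof of this lemma consists precisely of verifying the strong-mixing estimate \eqref{trmix1} and the uniform moment bound of Assumption \ref{a2} (i), and then invoking the abstract result of \cite[Proposition 2.6 and Remark 2.7]{Shirikyan}, which is exactly your fallback. Your sketch of the underlying moment-expansion combinatorics is consistent with this and has the right power counting (the budget $\varrho\big(\tfrac{p}{2}(1+\tilde q)+(1+\kappa)\gamma\big)\leq q$ and the factor $(k\tau^{-1})^{\varrho}$ both match), and the paper, like you, delegates that clustering argument to Shirikyan's proposition rather than proving it.
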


To proceed, we present  the proof of the strong LLN, i.e., $\frac1k S^{x^h,\Delta}_k\overset{a.s.}\longrightarrow\mu(f)$ as $k\to\infty$ and $|\Delta|\to0.$ 

\begin{proof}[\textbf{Proof of Theorem \ref{LLNle}}.]
For $f\in\mathcal C_{p,\gamma}$, 
applying  \eqref{5.4} and Proposition  \ref{Worder} leads to
\begin{align}\label{t3.1}
 |\mu(f)-\mu^{\Delta}(f)|\leq K\|f\|_{p,\gamma}|\Delta^{\a}|^{\gamma}.
\end{align}
Hence it suffices to prove that   $$\lim_{k\to\infty}\Big|\frac{1}{k}S_{k}^{x^{h},\Delta}-\mu^{\Delta}(f)\Big|=0\quad \text{a.s.}$$  for each fixed step-size $\Delta$ and $f\in\mathcal C_{p,\gamma}$.

For any $\delta\in(0,\frac14)$ and $k\in\mathbb N$ with $k\geq \lceil1/\tau\rceil$, define
\begin{align*}
\mathcal A_{k}^{x^{h}, \Delta}:=\Big\{\omega\in\Omega:\big|\frac{1}{k}S_{k}^{x^{h},\Delta}-\mu^{\Delta}(f)\big|(\omega)>\|f\|_{p,\gamma}t_{k}^{-\delta}\tau^{-\frac{1}{4}}\Big\}.
\end{align*}
Note that the condition $p(1+\tilde q)+2(1+\kappa)\gamma\leq q$ coincides with the one in Lemma \ref{l5.1} with $\varrho=2$. By virtue of  \eqref{l5.1_eq} with $\varrho=2$ one has
$$\E\Big[\big|\frac{1}{k}S_{k}^{x^{h},\Delta}-\mu^{\Delta}(f)\big|^{4}\Big]\leq{}K\|f\|^4_{p,\gamma}(1+\|x^h\|^{\tilde q(p(1+\tilde q)+2(1+\kappa)\gamma)})t_{k}^{-2}.$$
Applying the Chebyshev inequality  yields
\begin{align*}
\PP(\mathcal A_{k}^{x^{h},\Delta})\leq&~\frac{t_{k}^{4\delta}\tau}{\|f\|^{4}_{p,\gamma}} \E\Big[|\frac{1}{k}S_{k}^{x^{h},\Delta}-\mu^{\Delta}(f)|^{4}\Big]\nn\\
\leq&~ K(1+\|x^h\|^{\tilde q(p(1+\tilde q)+2(1+\kappa)\gamma)})t_{k}^{2(2\delta-1)}\tau.
\end{align*}
Then it follows from $2(1-2\delta) >1$ that
\begin{align}\label{th5.1.2}
\sum_{k=\lceil\frac{1}{\tau}\rceil}^{\infty}\PP(\mathcal A_{k}^{x^{h},\Delta})\leq&{} K(1+\|x^h\|^{\tilde q(p(1+\tilde q)+2(1+\kappa)\gamma)})\sum_{k=\lceil\frac{1}{\tau}\rceil}^{\infty}t_{k}^{2(2\delta-1)}\tau\nn\\
\leq &{}K(1+\|x^h\|^{\tilde q(p(1+\tilde q)+2(1+\kappa)\gamma)} ).
\end{align}
Define the 
random variable $ \mathcal K^{x^h,\Delta}$ by
\begin{align*}
\mathcal K^{x^h,\Delta}(\omega):=\inf\Big\{j\in \mathbb N ~\mbox{with}~ j\geq \lceil\frac{1}{\tau}\rceil:  \big|\frac{1}{k}S_{k}^{x^h,\Delta}-\mu^{\Delta}(f)\big|(\omega)\leq \|f\|_{p,\gamma}t_{k}^{-\delta}\tau^{-\frac{1}{4}}~~\forall  k\geq j+1\Big\}.
\end{align*}
Inequality \eqref{th5.1.2}, along with the Borel--Cantelli lemma 
implies that $\mathcal K^{x^h,\Delta}<\infty$ a.s. and
\begin{align}\label{th5.1.5}
\PP\Big\{\omega\in \Omega: \big|\frac{1}{k}S_{k}^{x^h,\Delta}-\mu^{\Delta}(f)\big|(\omega)\leq \|f\|_{p,\gamma}t_{k}^{-\delta}\tau^{-\frac{1}{4}} 
~\mbox{for all} ~k\geq \mathcal K^{x^h,\Delta}(\omega)+1\Big\}=1.
\end{align}
Let $T^{x^h,\Delta}=\tau\mathcal K^{x^h,\Delta}$. For the constant $l>0$ with $l+2(2\delta-1)<-1$,  we derive 
\begin{align}\label{th5.1.3}
\E[(T^{x^h,\Delta})^{l}]
=&{}\sum_{j=\lceil\frac{1}{\tau}\rceil}^{\infty}\E\big[(T^{x^h,\Delta})^{l}\textbf 1_{\{\mathcal K^{x^h,\Delta}=j\}}\big]=\sum_{j=\lceil\frac{1}{\tau}\rceil}^{\infty}(j\tau)^{l}\PP\big(\mathcal K^{x^h,\Delta}=j\big)\nn\\
\leq&~\sum_{j=\lceil\frac{1}{\tau}\rceil}^{\infty}(t_{j})^{l}\PP(\mathcal A_{j}^{x^h,\Delta})
\leq~K(1+\|x^h\|^{\tilde q(p(1+\tilde q)+2(1+\kappa)\gamma)}).
\end{align}
For $\tilde{\delta}>0$ and $N\in\mathbb N_+,$  define
$
\mathcal D_{N}^{x^h}:=\big\{\omega\in\Omega: T^{x^h,\Delta}+1>N^{\tilde \delta}\big\}.
$
When $\tilde{\delta}$ is chosen so that 
$\tilde \delta l>1$, it follows from \eqref{th5.1.3} that 
\begin{align*}
\sum_{N=1}^{\infty}\PP(\mathcal D_{N}^{x^h})
\leq&{}\sum_{N=1}^{\infty}(\frac{1}{N})^{\tilde \delta l}\E[(T^{x^h,\Delta}+1)^{l}]
\leq{}K(1+\|x^h\|^{\tilde q(p(1+\tilde q)+2(1+\kappa)\gamma)})\sum_{N=1}^{\infty}\frac{1}{N^{\tilde \delta l}}\\
\leq&\; K(1+\|x^h\|^{\tilde q(p(1+\tilde q)+2(1+\kappa)\gamma)}).
\end{align*} 
Using the Borel--Cantelli lemma again yields that there exists a random variable $\mathfrak N^{x^h}<\infty$ a.s. such that
\begin{align}\label{th5.1.4}
\PP\Big\{\omega\in\Omega:T^{x^h,\Delta}(\omega)+1\leq N^{\tilde \delta} ~\mbox{for all} ~N\geq \mathfrak N^{x^h}(\omega)\Big\}=1.
\end{align}
Combining \eqref{th5.1.5} and  \eqref{th5.1.4} 
yields that for a.s. $\omega\in\Omega,$ when $k\ge \frac{(\mathfrak N^{x^h}(\omega))^{\tilde{\delta}}}{\tau^{1+\frac{1}{4\delta}}}\ge \frac{T^{x^h,\Delta}(\omega)+1}{\tau}\ge \mathcal K^{x^h,\Delta}(\omega)+1,$
 we have
\begin{align*}
\Big|\frac{1}{k}S_{k}^{x^h,\Delta}-\mu^{\Delta}(f)\Big|(\omega)\leq\|f\|_{p,\gamma} t_{k}^{-\delta}\tau^{-\frac{1}{4}} 
\leq \|f\|_{p,\gamma} (\mathfrak N^{x^h}(\omega))^{-\tilde \delta\delta},
\end{align*}
which  implies $\lim_{k\to\infty}\frac1kS_{k}^{x^h,\Delta}=\mu^{\Delta}(f)$ a.s. for fixed  $\Delta.$ 
 This finishes the proof.
\end{proof}
\section{Proof of the numerical CLT}\label{sec_CLT}
 In this section, we present the proof of the CLT  of  numerical discretizations (i.e., Theorem \ref{CLTle}), based on the decomposition of the normalized time-averaging estimator.
 

\subsection{Some properties of the discrete martingale}\label{property_mar}
In this subsection, we study properties of the discrete martingale $\{\mathscr M^{x^h,\Delta}_{k}\}_{k\in\mathbb N}$ extracted from the normalized time-averaging estimator. Namely, for $k\in\mathbb N,$ define 
\begin{align}\label{mar_eq}
\mathscr M^{x^h,\Delta}_{k}:=&\; \tau\sum_{i=0}^{\infty}\Big(\E\big[f(Y_{t_i}^{x^h,\Delta})|\mathcal F_{t_{k}}\big]-\mu^{\Delta}(f)\Big)-\tau\sum_{i=0}^{\infty}\Big(\E\big[f(Y_{t_i}^{x^h,\Delta})|\mathcal F_{0}\big]-\mu^{\Delta}(f)\Big).
\end{align}
By fully utilizing the Markov property and the strong mixing of numerical discretizations of Markov processes, it can be shown in the following  proposition that $\mathscr M^{x^h,\Delta}_{k}$ is well-defined and is a martingale for $k\in\mathbb N$. 
The proof is  presented in  Section \ref{proof_prop}.
\begin{prop}\label{Prop4.1}
Let Assumption \textup{\ref{a2}} hold, $\gamma\in[\gamma_2,1]$, and  
 $p\ge 1$ satisfy $\frac{p\tilde q}{2}+(1+\kappa)\gamma\leq q$.
Then for any $x^h\in E^h$, $f\in\mathcal C_{p,\gamma}$,  the sequence $\{\mathscr M^{x^h,\Delta}_{k}\}_{k\in\mathbb N}$ is an $\{\mathcal F_{t_k}\}_{k\in \mathbb N}$-adapted martingale with  $\mathscr M^{x^h,\Delta}_{0}=0$. Moreover, we have
\begin{align}\label{p6.1.4}
|\mathscr M^{x^h,\Delta}_{k}|&\leq\tau\Big|\sum_{i=0}^{k-1}\big(f(Y_{t_i}^{x^h,\Delta})-\mu^{\Delta}(f)\big)\Big|\nn\\
&\quad +K\|f\|_{p,\gamma}(1+\|Y_{t_k}^{x^h,\Delta}\|^{\frac{p\tilde q}{2}+(1+\kappa)\gamma}+\|x^h\|^{\frac{p\tilde q}{2}+(1+\kappa)\gamma}).
\end{align}
\end{prop}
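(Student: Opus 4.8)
The plan is to exploit the time-homogeneous Markov property of $\{Y^{x^h,\Delta}_{t_k}\}_{k\in\mathbb N}$ to identify the conditional expectations with semigroup iterates, and then to control every tail by the strong mixing estimate \eqref{trmix1} together with the summability in Assumption \ref{a2}\,(ii). First I would check that both series in \eqref{mar_eq} converge absolutely a.s.\ and that $\mathscr M^{x^h,\Delta}_k\in L^1$. Split the first series at $i=k$. For $0\le i\le k-1$ the variable $Y^{x^h,\Delta}_{t_i}$ is $\mathcal F_{t_k}$-measurable, so $\E[f(Y^{x^h,\Delta}_{t_i})\mid\mathcal F_{t_k}]=f(Y^{x^h,\Delta}_{t_i})$, which is a finite block. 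For $i\ge k$ the Markov property gives $\E[f(Y^{x^h,\Delta}_{t_i})\mid\mathcal F_{t_k}]=P^{\Delta}_{t_{i-k}}f(Y^{x^h,\Delta}_{t_k})$, and \eqref{trmix1} yields
\[
\big|P^{\Delta}_{t_{i-k}}f(Y^{x^h,\Delta}_{t_k})-\mu^{\Delta}(f)\big|\le K\|f\|_{p,\gamma}\big(1+\|Y^{x^h,\Delta}_{t_k}\|^{\frac{p\tilde q}{2}+(1+\kappa)\gamma}\big)\big(\rho^{\Delta}(t_{i-k})\big)^{\gamma}.
\]
Since $\gamma\ge\gamma_2$ and $\rho^{\Delta}(t_j)\to0$, the tail $\tau\sum_{j\ge0}(\rho^{\Delta}(t_j))^{\gamma}$ is finite (it is dominated by the $\gamma_2$-summability of Assumption \ref{a2}\,(ii) once $\rho^{\Delta}\le1$), so the $i\ge k$ tail converges absolutely; the second series is treated identically via $\E[f(Y^{x^h,\Delta}_{t_i})\mid\mathcal F_0]=P^{\Delta}_{t_i}f(x^h)$. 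The hypothesis $\frac{p\tilde q}{2}+(1+\kappa)\gamma\le q$ combined with Assumption \ref{a2}\,(i) makes $\E[\|Y^{x^h,\Delta}_{t_k}\|^{\frac{p\tilde q}{2}+(1+\kappa)\gamma}]$ finite, giving integrability of $\mathscr M^{x^h,\Delta}_k$ and legitimizing all the conditional expectations. Adaptedness is immediate, and $\mathscr M^{x^h,\Delta}_0=0$ because the two series coincide at $k=0$.

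For the martingale property I would write $\mathscr M^{x^h,\Delta}_{k+1}-\mathscr M^{x^h,\Delta}_{k}=\tau\sum_{i=0}^{\infty}\big(\E[f(Y^{x^h,\Delta}_{t_i})\mid\mathcal F_{t_{k+1}}]-\E[f(Y^{x^h,\Delta}_{t_i})\mid\mathcal F_{t_k}]\big)$ and condition on $\mathcal F_{t_k}$; the tower property annihilates each summand, the interchange of $\E[\,\cdot\mid\mathcal F_{t_k}]$ with the infinite sum being justified by the absolute-convergence bound above together with conditional dominated convergence. Finally the estimate \eqref{p6.1.4} comes from the same decomposition: retaining the $0\le i\le k-1$ block as $\tau\sum_{i=0}^{k-1}(f(Y^{x^h,\Delta}_{t_i})-\mu^{\Delta}(f))$ and bounding the $i\ge k$ tail of the first series and the whole second series by \eqref{trmix1} produces, after summing $\tau\sum_j(\rho^{\Delta}(t_j))^{\gamma}<\infty$, exactly the terms $\|Y^{x^h,\Delta}_{t_k}\|^{\frac{p\tilde q}{2}+(1+\kappa)\gamma}$ and $\|x^h\|^{\frac{p\tilde q}{2}+(1+\kappa)\gamma}$.

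I expect the main obstacle to be the careful treatment of the infinite series rather than any single estimate: one must establish absolute convergence strongly enough to exchange summation with conditional expectation, and must verify that the growth exponent $\frac{p\tilde q}{2}+(1+\kappa)\gamma$ does not exceed the available moment order $q$, so that $\mathscr M^{x^h,\Delta}_k$ is genuinely integrable and the martingale identity is meaningful. Once these points are secured, the remaining algebra is routine.
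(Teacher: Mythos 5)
Your proposal is correct, and for the well-definedness, adaptedness, $\mathscr M^{x^h,\Delta}_0=0$, and the bound \eqref{p6.1.4} it follows the paper's argument exactly: split the first series at $i=k$, use the Markov property to rewrite the tails as $\tau\sum_{i\ge 0}\big(P^{\Delta}_{t_i}f(Y^{x^h,\Delta}_{t_k})-\mu^{\Delta}(f)\big)$ and $\tau\sum_{i\ge 0}\big(P^{\Delta}_{t_i}f(x^h)-\mu^{\Delta}(f)\big)$, control them by \eqref{trmix1} and the summability of $(\rho^{\Delta})^{\gamma}$, and get integrability from Assumption \ref{a2} (i) under $\frac{p\tilde q}{2}+(1+\kappa)\gamma\le q$; this is precisely \eqref{p6.1.1} and the displays following it.

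Where you differ is the martingale verification. You condition the termwise differences $\E[f(Y^{x^h,\Delta}_{t_i})\mid\mathcal F_{t_{k+1}}]-\E[f(Y^{x^h,\Delta}_{t_i})\mid\mathcal F_{t_k}]$ on $\mathcal F_{t_k}$, annihilate each by the tower property, and justify exchanging $\E[\,\cdot\mid\mathcal F_{t_k}]$ with the infinite sum by conditional dominated convergence, the dominating variable coming from the mixing bound (note the differences vanish for $i\le k$, so the domination is exactly your tail estimate). The paper instead computes $\E[\mathscr M^{x^h,\Delta}_{k+j}-\mathscr M^{x^h,\Delta}_{k}\mid\mathcal F_{t_k}]$ explicitly from the semigroup representation (see \eqref{p6.1.2}) and cancels the three sums via the shift identity $P^{\Delta}_{t_j}\sum_{i\ge0}\big(P^{\Delta}_{t_i}f-\mu^{\Delta}(f)\big)=\sum_{i\ge j}\big(P^{\Delta}_{t_i}f-\mu^{\Delta}(f)\big)$, itself justified by dominated convergence. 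Both routes rest on the same domination estimate, so neither is harder; yours is slightly more economical, since the Markov property enters only through convergence while the martingale identity is pure tower property. The paper's concrete computation has the side benefit that the one-step difference formula \eqref{p6.1.3} it produces is reused directly in the proof of Proposition \ref{p6.1} (moments of $\mathcal Z^{x^h,\Delta}_k$ and the identification of $H^{\Delta}$), which your argument would still need to derive separately.
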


Define  the martingale difference sequence   of $\{\mathscr M^{x^h,\Delta}_{k}\}_{k\in\mathbb N}$ by
\begin{align*}
\mathcal Z^{x^h,\Delta}_{k}:=\mathscr M^{x^h,\Delta}_{k}-\mathscr M^{x^h,\Delta}_{k-1}~~~\forall\; k\in\mathbb N_+,
~~~~\mathcal Z^{x^h,\Delta}_{0}=0.
\end{align*}
We call $\E\big[|\mathcal Z^{x^h,\Delta}_{k}|^2|\mathcal F_{t_{k-1}}\big]$  the conditional variance of the martingale difference sequence, whose    properties are presented in 
 following propositions.
 The proofs are postponed to Section \ref{proof_prop}.
\begin{prop}\label{p6.1}
Let Assumption \textup{\ref{a2}} hold and $\gamma\in[\gamma_2,1]$. 
\begin{itemize}
\item[(\romannumeral1)] 
For any constant $c$ satisfying 
$c\big(\frac{p\tilde q}{2}+(1+\kappa)\gamma\big)\leq q$, it holds that
\begin{align}\label{Zbound}
\sup_{h\in[0,\tilde h], \tau\in(0,\tilde \tau]}\sup_{k\geq 0}\E[|\mathcal Z^{x^h,\Delta}_{k}|^{c}]
\leq&~{}K\|f\|^c_{p,\gamma}(1+\|x^h\|^{c \tilde q(\frac{p\tilde q}{2}+(1+\kappa)\gamma)}).
\end{align}
 \item[(\romannumeral2)] Let $p\ge 1$ satisfy $p\tilde q+2(1+\kappa)\gamma\leq q.$ 
Then $H^{\Delta}(Y^{x^h,\Delta}_{t_{k}})=\mathbb E[|\mathcal Z^{x^h,\Delta}_{k+1}|^2|\mathcal F_{t_{k}}]$ for all $k\in\mathbb N$, where  $H^{\Delta}: E^h\to\mathbb R$ defined as
\begin{align}\label{l6.15+}
H^{\Delta}(u)=&-\tau^2|f(u)-\mu^{\Delta}(f)|^2+\tau^2 \E\Big[\Big|\sum_{i=0}^{\infty}\big(P^{\Delta}_{t_i}f(Y^{u,\Delta}_{t_1})-\mu^{\Delta}(f)\big)\Big|^2\Big]\nn\\
&-\tau^2 |\sum_{i=0}^{\infty}\big(P^{\Delta}_{t_i}f(u)-\mu^{\Delta}(f)\big)|^2+2\tau^2\big(f(u)-\mu^{\Delta}(f)\big) \sum_{i=0}^{\infty}\big(P^{\Delta}_{t_i}f(u)-\mu^{\Delta}(f)\big).
\end{align}
\item[(\romannumeral3)] Let 
  $p\geq1 $ satisfy
$2p\tilde q+2(1+2\kappa)\gamma\leq q.$
 Then 
$
H^{\Delta}\in\mathcal C_{2\tilde p_{\gamma},\gamma}$ and $\|H^{\Delta}\|_{2\tilde p_{\gamma},\gamma}\leq K \|f\|_{p,\gamma}^2,
$
where  $\tilde p_{\gamma}:=\tilde q\big(p\tilde q+(2+3\kappa)\gamma\big).$
\end{itemize}
\end{prop}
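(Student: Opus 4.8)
The plan is to reduce the whole proposition to the analysis of a single auxiliary function. Writing $\bar f:=f-\mu^{\Delta}(f)$ and using the time-homogeneous Markov property in \eqref{mar_eq} (so that $\mathbb{E}[f(Y^{x^h,\Delta}_{t_i})\mid\mathcal{F}_{t_k}]=P^{\Delta}_{t_{i-k}}f(Y^{x^h,\Delta}_{t_k})$ for $i\ge k$, while the terms with $i<k$ are $\mathcal{F}_{t_k}$-measurable), I would first collapse the sum. Setting
\[ g^{\Delta}(u):=\sum_{i=0}^{\infty}\big(P^{\Delta}_{t_i}f(u)-\mu^{\Delta}(f)\big),\qquad G^{\Delta}:=\tau g^{\Delta}, \]
the series converging by the strong mixing \eqref{trmix1}, this yields $\mathscr{M}^{x^h,\Delta}_{k}=\tau\sum_{i=0}^{k-1}\bar f(Y^{x^h,\Delta}_{t_i})+G^{\Delta}(Y^{x^h,\Delta}_{t_k})-G^{\Delta}(x^h)$. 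The discrete Poisson identity $P^{\Delta}_{t_1}g^{\Delta}=g^{\Delta}-\bar f$, obtained by shifting the index, then gives the clean increment
\[ \mathcal{Z}^{x^h,\Delta}_{k+1}=G^{\Delta}(Y^{x^h,\Delta}_{t_{k+1}})-\mathbb{E}\big[G^{\Delta}(Y^{x^h,\Delta}_{t_{k+1}})\mid\mathcal{F}_{t_k}\big], \]
which is patently a martingale difference. The key preliminary fact, used everywhere below, is that $G^{\Delta}\in\mathcal{C}_{p',\gamma}$ with $p':=p\tilde q+2(1+\kappa)\gamma$ and $\|G^{\Delta}\|_{p',\gamma}\le K\|f\|_{p,\gamma}$; this follows by summing the growth bound \eqref{trmix1} and the H\"older bound \eqref{Yattr1} against the summable weight $\tau\sum_i(\rho^{\Delta}(t_i))^{\gamma}$ from Assumption \ref{a2}(ii).

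Part (i) is then immediate from $|\mathcal{Z}^{x^h,\Delta}_{k+1}|\le|G^{\Delta}(Y^{x^h,\Delta}_{t_{k+1}})|+|P^{\Delta}_{t_1}G^{\Delta}(Y^{x^h,\Delta}_{t_k})|$: each summand is dominated by $K\|f\|_{p,\gamma}(1+\|\cdot\|^{p'/2})$, so raising to the power $c$ and passing expectations through the moment bound of Assumption \ref{a2}(i) gives \eqref{Zbound} precisely when $c\,p'/2\le q$. For part (ii), the Markov property turns the conditional variance into a function of the current state,
\[ \mathbb{E}\big[|\mathcal{Z}^{x^h,\Delta}_{k+1}|^2\mid\mathcal{F}_{t_k}\big]=P^{\Delta}_{t_1}\big(|G^{\Delta}|^2\big)(Y^{x^h,\Delta}_{t_k})-\big|P^{\Delta}_{t_1}G^{\Delta}(Y^{x^h,\Delta}_{t_k})\big|^2=:H^{\Delta}(Y^{x^h,\Delta}_{t_k}), \]
and expanding $|P^{\Delta}_{t_1}G^{\Delta}|^2$ with the Poisson identity $P^{\Delta}_{t_1}G^{\Delta}=G^{\Delta}-\tau\bar f$ together with $G^{\Delta}=\tau g^{\Delta}$ reproduces \eqref{l6.15+} term by term; the hypothesis $p'\le q$ of part (ii) is exactly what makes these second moments finite.

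For part (iii) I would treat $H^{\Delta}=A-B$ with $A:=P^{\Delta}_{t_1}(|G^{\Delta}|^2)$ and $B:=|P^{\Delta}_{t_1}G^{\Delta}|^2$. The $B$-term is benign: $P^{\Delta}_{t_1}G^{\Delta}=G^{\Delta}-\tau\bar f$ again lies in $\mathcal{C}_{p',\gamma}$ with norm $\le K\|f\|_{p,\gamma}$, so writing $B(u_1)-B(u_2)$ as difference-times-sum and using $p'\le\tilde p_{\gamma}$ places $B$ in $\mathcal{C}_{2\tilde p_{\gamma},\gamma}$. For $A$ the growth is controlled by $A(u)=\mathbb{E}[|G^{\Delta}(Y^{u,\Delta}_{t_1})|^2]\le K\|f\|^2_{p,\gamma}(1+\|u\|^{\tilde q p'})$ via Assumption \ref{a2}(i), with $\tilde q p'\le\tilde p_{\gamma}$. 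For the H\"older seminorm, when $\|u_1-u_2\|>1$ I simply use $|A(u_1)-A(u_2)|\le|A(u_1)|+|A(u_2)|$ (the regime $1\wedge\|u_1-u_2\|^{\gamma}=1$), and when $\|u_1-u_2\|\le1$ I couple the chains started at $u_1,u_2$ and bound, through the product rule and the \emph{uncapped} H\"older estimate \eqref{Yattr1} summed into $G^{\Delta}$,
\[ \big||G^{\Delta}(Y^{u_1,\Delta}_{t_1})|^2-|G^{\Delta}(Y^{u_2,\Delta}_{t_1})|^2\big|\le K\|f\|^2_{p,\gamma}\,\|Y^{u_1,\Delta}_{t_1}-Y^{u_2,\Delta}_{t_1}\|^{\gamma}\big(1+\|Y^{u_1,\Delta}_{t_1}\|^{a_H}+\|Y^{u_2,\Delta}_{t_1}\|^{a_H}\big), \]
where $a_H:=p\tilde q+(1+2\kappa)\gamma$ is the sum of the $\gamma$-H\"older growth of $G^{\Delta}$ and its sup-growth. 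Taking expectations, a Cauchy--Schwarz split together with the one-step estimate of Assumption \ref{a2}(ii) and $\mathbb{E}[\|Y^{u_1,\Delta}_{t_1}-Y^{u_2,\Delta}_{t_1}\|^{2\gamma}]\le(\mathbb{E}[\|Y^{u_1,\Delta}_{t_1}-Y^{u_2,\Delta}_{t_1}\|^2])^{\gamma}$ produces the factor $\|u_1-u_2\|^{\gamma}=1\wedge\|u_1-u_2\|^{\gamma}$ and a polynomial weight of order $\tilde q a_H+\kappa\gamma$. The moment requirement is exactly $2a_H=2p\tilde q+2(1+2\kappa)\gamma\le q$, the hypothesis of part (iii), and the arithmetic $\tilde q a_H+\kappa\gamma\le\tilde p_{\gamma}$ and $\tilde q p'\le\tilde p_{\gamma}$ (both using $\tilde q\ge1$) shows that the H\"older and growth parts of $A$ sit inside $\mathcal{C}_{2\tilde p_{\gamma},\gamma}$. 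Combining with $B$ gives $\|H^{\Delta}\|_{2\tilde p_{\gamma},\gamma}\le K\|f\|^2_{p,\gamma}$.

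The main obstacle is precisely this H\"older estimate for $A$. Since $A$ is the Markov kernel $P^{\Delta}_{t_1}$ applied to the quadratic, non-smooth functional $|G^{\Delta}|^2$, there is no differential structure available and the quasi-metric $d_{p,\gamma}$ must be transported through the kernel purely by coupling, while simultaneously keeping the cap $1\wedge(\cdot)^{\gamma}$ and routing all polynomial weights through the moment bound of Assumption \ref{a2}(i). The decisive point is that one must use the \emph{uncapped} H\"older bound \eqref{Yattr1}, which carries the sharper growth exponent $a_H$ rather than the sup-growth $p'$, before applying Cauchy--Schwarz; a cruder argument based on the sup-growth would demand $2p'\le q$ instead of the assumed $2a_H\le q$. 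Verifying that, after passing through $P^{\Delta}_{t_1}$ (which inflates exponents by the factor $\tilde q$) and through the coupling (which contributes the $\kappa\gamma$ coming from the weight $(1+\|u_1\|^{\kappa}+\|u_2\|^{\kappa})^{\gamma}$), all exponents collapse to exactly $\tilde p_{\gamma}=\tilde q(p\tilde q+(2+3\kappa)\gamma)$ is the crux of the argument; these computations are carried out in Section \ref{proof_prop}.
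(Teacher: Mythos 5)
Your proposal is correct: the exponent arithmetic ($p'=p\tilde q+2(1+\kappa)\gamma$, $a_H=p\tilde q+(1+2\kappa)\gamma$, $\tilde q a_H+\kappa\gamma\le\tilde p_\gamma$, $\tilde q p'\le\tilde p_\gamma$) checks out against the stated hypotheses, and each step is justified by the same ingredients the paper uses. The difference from the paper is organizational rather than substantive, but it is a genuinely cleaner decomposition. The paper works directly with the three-term expression \eqref{p6.1.3} for $\mathcal Z^{x^h,\Delta}_{k+1}$: part (ii) is proved by expanding its conditional square into six terms (see \eqref{Vrewri}--\eqref{+l6.1.1}) and collapsing them with the shift identities \eqref{l6.1+1}--\eqref{l6.1+2}, and part (iii) by splitting $H^{\Delta}(u_1)-H^{\Delta}(u_2)$ into four differences $I_1,\dots,I_4$ estimated one by one. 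You instead introduce the Poisson-type function $G^{\Delta}=\tau\sum_i(P^{\Delta}_{t_i}f-\mu^{\Delta}(f))$, identify $\mathcal Z^{x^h,\Delta}_{k+1}$ as the martingale increment of $G^{\Delta}(Y^{x^h,\Delta}_{t_{k+1}})$, and obtain $H^{\Delta}=P^{\Delta}_{t_1}(|G^{\Delta}|^2)-|P^{\Delta}_{t_1}G^{\Delta}|^2$ from the conditional-variance identity; the relation $P^{\Delta}_{t_1}G^{\Delta}=G^{\Delta}-\tau(f-\mu^{\Delta}(f))$ (the same dominated-convergence shift as in the paper) then reproduces \eqref{l6.15+} and bundles the paper's $I_1,I_3,I_4$ into the single term $B=|P^{\Delta}_{t_1}G^{\Delta}|^2$, disposed of by the elementary fact that the square of a $\mathcal C_{p',\gamma}$ function lies in $\mathcal C_{2p',\gamma}$. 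The genuinely hard term is the same in both proofs: your $A=P^{\Delta}_{t_1}(|G^{\Delta}|^2)$ is exactly the paper's $I_2$, and your treatment of it --- the uncapped H\"older bound \eqref{Yattr1} summed into $G^{\Delta}$ so as to carry the sharper exponent $a_H$, then Cauchy--Schwarz against the one-step contraction of Assumption \ref{a2} (ii) and the moment bound of Assumption \ref{a2} (i), which is where the hypothesis $2p\tilde q+2(1+2\kappa)\gamma\le q$ enters --- coincides with the paper's identity $I_2(u_1,u_2)=-\E[I_3(Y^{u_1,\Delta}_{t_1},Y^{u_2,\Delta}_{t_1})]$ followed by \eqref{l6.1.10}. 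What your packaging buys is brevity and a transparent explanation of the martingale structure and of why $H^{\Delta}\ge0$; what the paper's buys is term-by-term explicitness, which it reuses later (e.g.\ in Proposition \ref{p6.2}). One cosmetic caveat: your symbol $G^{\Delta}$ collides with the paper's $G^{\Delta}$ in \eqref{Vrewri}, which there denotes the conditional second moment itself, so a different letter would be needed if your argument were merged into the text.
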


\begin{prop}\label{p6.2}
Let Assumptions \textup{\ref{a1}--\ref{a3}} hold, $\gamma\in[\gamma_1\vee \gamma_2,1]$,
and $p\ge 1$ satisfy 
$
p(1+\tilde q\vee\tilde r)+2(1+\kappa\vee\beta)\gamma\leq q\wedge r.
$
Then for any $f\in\mathcal C_{p,\gamma}$, 
\begin{align}\label{p6.2.1}
\mu^{\Delta}(H^{\Delta})=-\tau^{2}\mu^{\Delta}\big(|f-\mu^{\Delta}(f)|^2\big)+2\tau\mu^{\Delta}\Big(\big(f-\mu^{\Delta}(f)\big)\sum_{i=0}^{\infty}\big(P^{\Delta}_{t_i}f-\mu^{\Delta}(f)\big)\tau\Big).
\end{align}
Moreover, $|\mu^{\Delta}(H^{\Delta})|\leq K\tau \|f\|_{p,\gamma}^2$ and
$
\lim_{|\Delta|\rightarrow 0}\frac{\mu^{\Delta}(H^{\Delta})}{\tau}=v^2.
$
\end{prop}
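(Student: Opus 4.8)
\emph{The formula \eqref{p6.2.1}.} The plan is to integrate the four terms of $H^{\Delta}$ in \eqref{l6.15+} against $\mu^{\Delta}$ and to exhibit a cancellation of the two middle terms. Writing $g^{\Delta}(u):=\sum_{i=0}^{\infty}\big(P^{\Delta}_{t_i}f(u)-\mu^{\Delta}(f)\big)$, the first and the fourth terms of \eqref{l6.15+} integrate directly to the two summands on the right-hand side of \eqref{p6.2.1}. The crux is that the second and the third terms cancel: since the inner sum in the second term of \eqref{l6.15+} is $g^{\Delta}$ evaluated at the random point $Y^{u,\Delta}_{t_1}$, one has $\E\big[|g^{\Delta}(Y^{u,\Delta}_{t_1})|^2\big]=\big(P^{\Delta}_{t_1}|g^{\Delta}|^2\big)(u)$, and the invariance $\mu^{\Delta}(P^{\Delta}_{t_1}\psi)=\mu^{\Delta}(\psi)$ then shows that the second term integrates to $\tau^2\mu^{\Delta}(|g^{\Delta}|^2)$, which is exactly the negative of the integral of the third term. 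All integrals are finite by Proposition \ref{p6.1}(\romannumeral3). This gives \eqref{p6.2.1}.

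\emph{The bound $|\mu^{\Delta}(H^{\Delta})|\le K\tau\|f\|_{p,\gamma}^2$.} From \eqref{p6.2.1} I would factor a single $\tau$ out of the second summand, so that the surviving factor is $\Phi^{\Delta}:=\tau g^{\Delta}$. The first summand is $O(\tau^2)$ because $\mu^{\Delta}(|f-\mu^{\Delta}(f)|^2)\le K\|f\|_{p,\gamma}^2$ by the moment bound \eqref{trmub}. For $\Phi^{\Delta}$, the strong mixing \eqref{trmix1} yields $|\Phi^{\Delta}(u)|\le K\|f\|_{p,\gamma}(1+\|u\|^{\frac{p\tilde q}{2}+(1+\kappa)\gamma})\,\tau\sum_{i}(\rho^{\Delta}(t_i))^{\gamma}$, and Assumption \ref{a2}(\romannumeral2) together with $\gamma\ge\gamma_2$ bounds $\tau\sum_i(\rho^{\Delta}(t_i))^{\gamma}$ uniformly in $\Delta$; integrating against $\mu^{\Delta}$ using \eqref{trmub} then gives the claimed $O(\tau)$ estimate.

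\emph{The limit.} Dividing \eqref{p6.2.1} by $\tau$ gives $\mu^{\Delta}(H^{\Delta})/\tau=-\tau\mu^{\Delta}(|f-\mu^{\Delta}(f)|^2)+2\mu^{\Delta}\big((f-\mu^{\Delta}(f))\Phi^{\Delta}\big)$, and the first term vanishes as $\tau\to0$ by the previous step. The remaining task is to prove $\mu^{\Delta}\big((f-\mu^{\Delta}(f))\Phi^{\Delta}\big)\to\mu\big((f-\mu(f))\Phi\big)$ with $\Phi(u):=\int_0^{\infty}\big(P_tf(u)-\mu(f)\big)\mathrm dt$, for then the limit equals $v^2=2\mu\big((f-\mu(f))\Phi\big)$. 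I would split the difference into three pieces: replacing the outer measure $\mu^{\Delta}$ by $\mu$, handled by $\mathbb W_2(\mu^{\Delta},\mu)\le K|\Delta^{\a}|$ from Proposition \ref{Worder} together with \eqref{5.4} applied to the integrand viewed as an element of a suitable $\mathcal C_{p',\gamma}$; replacing $\mu^{\Delta}(f)$ by $\mu(f)$, handled by \eqref{t3.1}; and the pointwise weighted convergence $\Phi^{\Delta}(u)\to\Phi(u)$.

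\emph{The main obstacle.} The convergence $\Phi^{\Delta}\to\Phi$ is the heart of the matter, since it compares an infinite Riemann sum of the discrete semigroup with an integral of the exact one. I would fix a truncation time $T$ and split both $\Phi^{\Delta}$ and $\Phi$ at $t_i\le T$ versus $t_i>T$. On the head the discretization error $\tau\sum_{t_i\le T}\big|(P^{\Delta}_{t_i}f-\mu^{\Delta}(f))-(P_{t_i}f-\mu(f))\big|$ is controlled termwise by Assumption \ref{a3} via \eqref{5.4} (each term of order $(1+\|u\|^{c})|\Delta^{\a}|^{\gamma}$) and by \eqref{t3.1}; with the factor $\tau$ and the $O(T/\tau)$ indices this is at most $KT(1+\|u\|^{c})|\Delta^{\a}|^{\gamma}\to0$ for fixed $T$, while the Riemann-sum error $\tau\sum_{t_i\le T}(P_{t_i}f-\mu(f))-\int_0^T(P_tf-\mu(f))\mathrm dt\to0$ by continuity of $t\mapsto P_tf(u)$. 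The two tails are made uniformly small: $|\tau\sum_{t_i>T}(P^{\Delta}_{t_i}f-\mu^{\Delta}(f))|\le K(1+\|u\|^{c})\tau\sum_{t_i>T}(\rho^{\Delta}(t_i))^{\gamma}$ by \eqref{trmix1} and the uniform summability of Assumption \ref{a2}(\romannumeral2), while $|\int_T^{\infty}(P_tf-\mu(f))\mathrm dt|\le K(1+\|u\|^{c})\int_T^{\infty}\rho^{\gamma}(t)\mathrm dt$ by \eqref{5.4}, \eqref{W}, and Assumption \ref{a1}(\romannumeral2) (finite since $\gamma\ge\gamma_1$). Letting $\tau\to0$ first and then $T\to\infty$ gives $\Phi^{\Delta}\to\Phi$; finally the uniform moment bounds of Remark \ref{r2.1} and Assumption \ref{a1}(\romannumeral1) upgrade all three pieces from pointwise to integral convergence, identifying the limit with $v^2$.
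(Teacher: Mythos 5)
Your proof of \eqref{p6.2.1} and of the bound $|\mu^{\Delta}(H^{\Delta})|\le K\tau\|f\|_{p,\gamma}^2$ is essentially the paper's argument: the same cancellation of the second and third terms of \eqref{l6.15+} through $(P^{\Delta}_{t_1})^*\mu^{\Delta}=\mu^{\Delta}$, and the same mixing/moment bounds \eqref{trmix1}, \eqref{trmub}. The outer decomposition of your limit step (replace the outer measure $\mu^{\Delta}$ by $\mu$ via Proposition \ref{Worder} and a uniform $\mathcal C_{p',\gamma}$-bound on the integrand; replace $\mu^{\Delta}(f)$ by $\mu(f)$ via \eqref{t3.1}) also coincides with the paper's terms $II_1$--$II_3$. (A minor inaccuracy: you invoke Proposition \ref{p6.1}(iii) for integrability, but that part requires $2p\tilde q+2(1+2\kappa)\gamma\le q$, which the hypotheses of Proposition \ref{p6.2} do not grant in general; the paper justifies well-definedness through Proposition \ref{p6.1}(ii) and the growth bounds from \eqref{trmix1}, which your own Step 2 estimates already supply.)

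The genuine gap is in your treatment of the convergence $\Phi^{\Delta}\to\Phi$. First, your head term relies on the Riemann-sum convergence $\tau\sum_{t_i\le T}(P_{t_i}f-\mu(f))\to\int_0^T(P_tf-\mu(f))\mathrm dt$, which needs continuity of $t\mapsto P_tf(u)$; no stochastic continuity of $X_t$ in time appears anywhere in Assumptions \ref{a1}--\ref{a3}. Second, and more seriously, your tail estimate asserts that $\tau\sum_{t_i>T}(\rho^{\Delta}(t_i))^{\gamma}$ becomes small, uniformly in $\Delta$, as $T\to\infty$, ``by the uniform summability of Assumption \ref{a2}(ii)''. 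That assumption only bounds the \emph{full} sum $\sup_{\Delta}\tau\sum_{k\ge0}(\rho^{\Delta}(t_k))^{\gamma_2}$; uniform boundedness of a sum does not control its tails uniformly. The mixing mass is allowed to drift to late times as $\tau\to0$ (say $\rho^{\Delta}$ of unit size only for $t_k$ near $\tau^{-1}$), in which case $\lim_{T\to\infty}\limsup_{\tau\to0}\tau\sum_{t_i>T}(\rho^{\Delta}(t_i))^{\gamma}>0$ and your iterated limit does not close. Tail control of exactly this kind is what the paper imposes as the \emph{separate} hypothesis \eqref{rho_remainder} in Theorem \ref{CLTle}; it is not available inside Proposition \ref{p6.2}. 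Both gaps disappear if you argue as the paper does: since $Y^{x^h,\Delta}_t$ is defined for every real $t\ge0$ as a piecewise-constant interpolation, one has the exact identity $\tau\sum_{i=0}^{\infty}\big(P^{\Delta}_{t_i}f-\mu^{\Delta}(f)\big)=\int_0^{\infty}\big(P^{\Delta}_tf-\mu^{\Delta}(f)\big)\mathrm dt$, so there is no time-discretization error to estimate at all; moreover Assumption \ref{a3} is a supremum over all $t\ge0$, so \eqref{add_eq2} compares $P^{\Delta}_tf(u)$ with $P_tf(u)$ at every time, not only at grid points. The paper then concludes by Fubini and Fatou applied to $\mu\big(\mathbf 1_{\{E^h\}}\int_0^{\infty}\big|f\big((P^{\Delta}_tf-\mu^{\Delta}(f))-(P_tf-\mu(f))\big)\big|\mathrm dt\big)$, together with $\mu\big(f\int_0^{\infty}P_tf\,\mathrm dt\,\mathbf 1_{\{E\backslash E^h\}}\big)\to0$, with no truncation in time and hence no need for the uniform tail decay your argument presupposes.
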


\subsection{Proof of Theorem \ref{CLTle}}\label{sub5.2}
The proof of Theorem \ref{CLTle} is based on the decomposition of the normalized time-averaging estimator $\frac{1}{\sqrt {k\tau}} \sum_{i=0}^{k-1}\big(f(Y^{x^h,\Delta}_{t_i})-\mu(f)\big)\tau$ into a martingale term and a negligible term. Namely, 
\begin{align}\label{6.1}
\frac{1}{\sqrt {k\tau}}\sum_{i=0}^{k-1}\big(f(Y^{x^h,\Delta}_{t_i})-\mu(f)\big)\tau =\frac{1}{\sqrt {k\tau}}\mathscr M^{x^h,\Delta}_{k}+\frac{1}{\sqrt {k\tau}}\mathscr R^{x^h,\Delta}_{k},
\end{align}
where $\mathscr M^{x^h,\Delta}_{k}$ is given in \eqref{mar_eq} and 
\begin{align*}
\mathscr R^{x^h,\Delta}_{k}:=&-\tau\sum_{i=k}^{\infty}\Big(\E\big[f(Y_{t_i}^{x^h,\Delta})|\mathcal F_{t_{k}}\big]-\mu^{\Delta}(f)\Big)+\tau\sum_{i=0}^{\infty}\Big(\E\big[f(Y_{t_i}^{x^h,\Delta})|\mathcal F_{0}\big]-\mu^{\Delta}(f)\Big)\\
&+k\tau(\mu^{\Delta}(f)-\mu(f)).
\end{align*}
By virtue of the Slutsky theorem, the idea for proving Theorem \ref{CLTle} is to prove that $\frac{1}{\sqrt{k\tau}}\mathscr M^{x^h,\Delta}_{k}\overset{d}\longrightarrow\mathscr N(0,v^2)$ and $\frac{1}{\sqrt{k\tau}}\mathscr R^{x^h,\Delta}_{k}\overset{\mathbb P}\longrightarrow0.$ 

\begin{proof}
\textit{Step 1: Prove that
$\frac{1}{\sqrt{k\tau}}\mathscr M^{x^h,\Delta}_{k}$ converges in distribution to $\mathscr N(0,v^2) $ as $|\Delta|\rightarrow 0$.} Recalling  $k=\lceil\tau^{-2\l-1}\rceil$, 
 it is equivalent to show that $\tau^{\l}\mathscr M^{x^h,\Delta}_{\lceil\tau^{-2\l-1}\rceil}$ converges in distribution to $\mathscr N(0,v^2) $. To this end, we show that 
 the characteristic function of $\tau^{\l}\mathscr M^{x^h,\Delta}_{\lceil\tau^{-2\l-1}\rceil}$ satisfies
\begin{align}\label{claimM}
\lim_{|\Delta|\rightarrow 0}\E \big[\exp{\big({\bf i}\iota\tau^{\l}\mathscr M^{x^h,\Delta}_{\lceil\tau^{-2\l-1}\rceil}\big)}\big]=e^{-\frac{v^2\iota^2}{2}}\quad\forall\; \iota \in\RR,
\end{align}
where $\bf i$ is the imaginary unit. 
Without loss of generality, we assume that $\mu(f)=0$. Otherwise, we let $\tilde f=f-\mu(f)$ and consider $\tilde f$ instead of $f$.

A direct calculation gives
\begin{align}\label{th6.1.1}
&e^{\frac{v^2\iota^2}{2}}\E\big[\exp{({\bf i} \iota\tau^{\l}\mathscr {M}^{x^h,\Delta}_{k})}\big]-1\nn\\
=&{}\sum_{j=0}^{k-1}\Big(e^{\frac{v^2\iota^2(j+1)}{2k}}\E\big[\exp{({\bf i} \iota\tau^{\l}\mathscr {M}^{x^h,\Delta}_{j+1})}\big]-e^{\frac{v^2\iota^2j}{2k}}\E\big[\exp{({\bf i} \iota\tau^{\l}\mathscr {M}^{x^h,\Delta}_{j})}\big]\Big)\nn\\
=&{}\sum_{j=0}^{k-1}e^{\frac{v^2\iota^2(j+1)}{2k}}\E\Big[\exp{({\bf i} \iota\tau^{\l}\mathscr {M}^{x^h,\Delta}_{j})}\big(\exp{({\bf i} \iota\tau^{\l}\mathcal {Z}^{x^h,\Delta}_{j+1})}-1\big)\Big]\nn\\
&+\sum_{j=0}^{k-1}e^{\frac{v^2\iota^2(j+1)}{2k}}(1-e^{-\frac{v^2\iota^2}{2k}})\E\big[\exp{({\bf i} \iota\tau^{\l}\mathscr {M}^{x^h,\Delta}_{j})}\big]\nn\\
=&{}\sum_{j=0}^{k-1}e^{\frac{v^2\iota^2(j+1)}{2k}}\E\Big[\exp{({\bf i} \iota\tau^{\l}\mathscr {M}^{x^h,\Delta}_{j})}\big(\exp{({\bf i} \iota\tau^{\l}\mathcal {Z}^{x^h,\Delta}_{j+1})}-1+\frac{v^2\iota^2}{2k}\big)\Big]\nn\\
&+\sum_{j=0}^{k-1}e^{\frac{v^2\iota^2(j+1)}{2k}}(1-e^{-\frac{v^2\iota^2}{2k}}-\frac{v^2\iota^2}{2k})\E\big[\exp{({\bf i} \iota\tau^{\l}\mathscr {M}^{x^h,\Delta}_{j})}\big].
\end{align}
Note that for any $\zeta\in\RR\setminus\{0\}$,
$
e^{{\bf i}\zeta}=1+{\bf i}\zeta-\frac{\zeta^2}{2}-\zeta^2Q(\zeta),
$
where
$
Q(\zeta)=\zeta^{-2}\int_{0}^{\zeta}\int_{0}^{\zeta_2}(e^{{\bf i}\zeta_1}-1)\mathrm d\zeta_1\mathrm d\zeta_2
$
and it satisfies 
$
\sup_{\zeta\in\mathbb R}|Q(\zeta)|\leq 1,|Q(\zeta)|=\mathcal O(|\zeta|) \text{ for } \zeta\ll1.
$
This, along with $\E[\mathcal Z^{x^h,\Delta}_{j+1}|\mathcal F_{t_{j}}]=0$ implies that
\begin{align}\label{th6.1.2}
&\E\Big[\exp{({\bf i} \iota\tau^{\l}\mathscr {M}^{x^h,\Delta}_{j})}\big(\exp{({\bf i} \iota\tau^{\l}\mathcal {Z}^{x^h,\Delta}_{j+1})}-1+\frac{v^2\iota^2}{2k}\big)\Big]\nn\\
=&~\E\Big[\exp{({\bf i} \iota\tau^{\l}\mathscr {M}^{x^h,\Delta}_{j})}\E\Big[\Big(-\frac{\iota^2\tau^{2\l}(\mathcal Z^{x^h,\Delta}_{j+1})^2}{2}-\iota^2\tau^{2\l}(\mathcal Z^{x^h,\Delta}_{j+1})^2Q_{j+1}+\frac{v^2\iota^2}{2k}\Big)\Big|\mathcal F_{t_{j}}\Big]\Big]\nn\\
=&~\frac{\iota^2\tau^{2\l}}{2}\E\Big[\exp{({\bf i} \iota\tau^{\l}\mathscr {M}^{x^h,\Delta}_{j})}\big(\frac{v^2}{k\tau^{2\l}}-(\mathcal Z^{x^h,\Delta}_{j+1})^2\big)\Big]\nn\\
&-\iota^2\tau^{2\l}\E\Big[\exp{({\bf i} \iota\tau^{\l}\mathscr {M}^{x^h,\Delta}_{j})}(\mathcal Z^{x^h,\Delta}_{j+1})^2Q_{j+1}\Big],
\end{align}
where 
$Q_{j+1}:=Q(\iota\tau^{\l}\mathcal {Z}^{x^h,\Delta}_{j+1})$.  Plugging \eqref{th6.1.2} into \eqref{th6.1.1} yields
\begin{align}\label{th6.1.3}
e^{\frac{v^2\iota^2}{2}}\E[\exp{({\bf i} \iota\tau^{\l}\mathscr {M}^{x^h,\Delta}_{k})}]-1
= I_1(k)+I_2(k)+I_3(k),
\end{align}
where
\begin{align*}
I_1(k):=&{}\sum_{j=0}^{k-1}e^{\frac{v^2\iota^2(j+1)}{2k}}(1-e^{-\frac{v^2\iota^2}{2k}}-\frac{v^2\iota^2}{2k})\E[\exp{({\bf i} \iota\tau^{\l}\mathscr {M}^{x^h,\Delta}_{j})}],\nn\\
I_2(k):=&{}-\iota^2\tau^{2\l}\sum_{j=0}^{k-1}e^{\frac{v^2\iota^2(j+1)}{2k}}\E\Big[\exp{({\bf i} \iota\tau^{\l}\mathscr {M}^{x^h,\Delta}_{j})}(\mathcal Z^{x^h,\Delta}_{j+1})^2Q_{j+1}\Big],\nn\\
I_3(k):=&{}\frac{\iota^2\tau^{2\l}}{2}\sum_{j=0}^{k-1}e^{\frac{v^2\iota^2(j+1)}{2k}}\E\Big[\exp{({\bf i} \iota\tau^{\l}\mathscr {M}^{x^h,\Delta}_{j})}\big(\frac{v^2}{k\tau^{2\l}}-(\mathcal Z^{x^h,\Delta}_{j+1})^2\big)\Big].
\end{align*}
\textbf{Estimate of the term $I_{1}(k)$.} Applying  the inequality $|1-e^{-\zeta}-\zeta|\leq \frac12\zeta^2$ for $\zeta>0$ gives
\begin{align}\label{th6.1.4}
\lim_{\tau\rightarrow0}|I_1(k)|\leq&{}\lim_{\tau\rightarrow0} e^{\frac{v^2\iota^2}{2}}\sum_{j=0}^{k-1}|1-e^{-\frac{v^2\iota^2}{2k}}-\frac{v^2\iota^2}{2k}|\nn\\
\leq&{} \lim_{\tau\rightarrow0}e^{\frac{v^2\iota^2}{2}}\sum_{j=0}^{k-1}\frac{v^4\iota^4}{8k^2}
\leq \lim_{\tau\rightarrow0}\frac{K}{k}=0.
\end{align}
\textbf{Estimate of  the term $I_{2}(k)$.} It follows from $\sup_{\zeta\in\RR}|Q(\zeta)|\leq 1$ that for any $\epsilon>0$,
\begin{align}\label{th6.1.5}
I_2(k)\leq\iota^2e^{\frac{v^2\iota^2}{2}}\tau^{2\l} \sum_{j=0}^{k-1}\E\big[\mathcal (\mathcal Z^{x^h,\Delta}_{j+1})^2|Q_{j+1}|\big]
\leq I_{2,1}(k)+I_{2,2}(k),
\end{align}
where
\begin{align*}
I_{2,1}(k):=&~\iota^2 e^{\frac{v^2\iota^2}{2}}\tau^{2\l}\sum_{j=0}^{k-1}\E\big[\mathcal (\mathcal Z^{x^h,\Delta}_{j+1})^2\textbf{1}_{\{|\mathcal Z^{x^h,\Delta}_{j+1}|>\epsilon\tau^{-\l}\}}\big],\nn\\
I_{2,2}(k):=&~\iota^2 e^{\frac{v^2\iota^2}{2}}\tau^{2\l}\sum_{j=0}^{k-1}\E\big[\mathcal (\mathcal Z^{x^h,\Delta}_{j+1})^2|Q_{j+1}|\textbf{1}_{\{|\mathcal Z^{x^h,\Delta}_{j+1}|\leq\epsilon\tau^{-\l}\}}\big].
\end{align*}
Using 
 Propositions \ref{p6.1}--\ref{p6.2} leads to  that for any $\e\in(0,\l)$,
\begin{align}\label{th6.1.7+}
I_{2,2}(k)
\leq&~\iota^2 e^{\frac{v^2\iota^2}{2}}\tau^{2\l}\sup_{|\zeta|\leq\iota\epsilon}|Q(\zeta)|\sum_{j=0}^{k-1}\E[(\mathcal Z^{x^h,\Delta}_{j+1})^2]\nn\\
\leq &~\iota^2 e^{\frac{v^2\iota^2}{2}}\sup_{|\zeta|\leq\iota\epsilon}|Q(\zeta)| \tau^{2\l}\Big[\sum_{j=0}^{\lceil\tau^{-1-(\l-\e)}\rceil-1}\E[(\mathcal Z^{x^h,\Delta}_{j+1})^2]\nn\\
&+\sum_{j=\lceil\tau^{-1-(\l-\e)}\rceil}^{k-1}\Big(\E\big[H^{\Delta}(Y^{x^h,\Delta}_{t_j})\big]-\mu^{\Delta}(H^{\Delta})\Big)+k|\mu^{\Delta}(H^{\Delta})|\Big]\nn\\
\leq &~\iota^2 e^{\frac{v^2\iota^2}{2}}\sup_{|\zeta|\leq\iota\epsilon}|Q(\zeta)| \Big[\tau^{2\l}\sum_{j=0}^{\lceil\tau^{-1-(\l-\e)}\rceil-1}\E[(\mathcal Z^{x^h,\Delta}_{j+1})^2]\nn\\
&+\tau^{2\l}\sum_{j=\lceil\tau^{-1-(\l-\e)}\rceil}^{k-1}\big(P^{\Delta}_{t_j}H^{\Delta}(x^h)-\mu^{\Delta}(H^{\Delta})\big)+K\|f\|_{p,\gamma}^2\Big].
\end{align}
Combining $\E\big[|\mathscr M^{x^h,\Delta}_{\lceil\tau^{-1-(\l-\e)}\rceil}|^2\big]
=\sum_{j=0}^{\lceil\tau^{-1-(\l-\e)}\rceil-1}\E[(\mathcal {Z}^{x^h,\Delta}_{j+1})^2]$ 
and  \eqref{p6.1.4} implies that
\begin{align*}
&\sum_{j=0}^{\lceil\tau^{-1-(\l-\e)}\rceil-1}\E[(\mathcal {Z}^{x^h,\Delta}_{j+1})^2]\nn\\
\leq{}&K\tau^{1-(\l-\e)}\sum_{j=0}^{\lceil\tau^{-1-(\l-\e)}\rceil-1}\E\big[|f(Y_{t_j}^{x^h,\Delta})|^2+|\mu^{\Delta}(f)|^2\big]\nn\\
&+K\|f\|^2_{p,\gamma}\E\Big[1+\|Y^{x^h,\Delta}_{t_{\lceil\tau^{-1-(\l-\e)}\rceil}}\|^{p\tilde q+2(1+\kappa)\gamma}+\|x^h\|^{p\tilde q+2(1+\kappa)\gamma}\Big].
\end{align*}
Since the condition  \eqref{condip} implies that
$p\tilde q+2(1+\kappa)\gamma\leq r$, by Assumption \ref{a2} (i) we obtain
 \begin{align}\label{th6.1.14+}
\sum_{j=0}^{\lceil\tau^{-1-(\l-\e)}\rceil-1}\E[(\mathcal {Z}^{x^h,\Delta}_{j+1})^2]\leq K\|f\|^2_{p,\gamma}(1+\|x^h\|^{\tilde q(p\tilde q+2(1+\kappa)\gamma)})\tau^{-2(\l-\e)}.
\end{align}
It follows from \eqref{condip} and Proposition \ref{p6.1} (\romannumeral3) that $H^{\Delta}\in\mathcal C_{2\tilde p_{\gamma},\gamma}$ with $2\tilde p_{\gamma}\leq q.$
Making use of \eqref{trmix1} yields 
\begin{align}\label{th6.1.15+}
&\tau^{2\lambda}\sum_{j=\lceil\tau^{-1-(\l-\e)}\rceil}^{k-1}\big(P^{\Delta}_{t_j}H^{\Delta}(x^h)-\mu^{\Delta}(H^{\Delta})\big)\nn\\
\leq&~K \|H^{\Delta}\|_{2\tilde p_{\gamma},\gamma}(1+\|x^h\|^{\tilde q\tilde p_{\gamma}+(1+\kappa)\gamma})\tau^{2\l}\sum_{j=\lceil\tau^{-1-(\l-\e)}\rceil}^{k-1}\big(\rho^{\Delta}(t_j)\big)^{\gamma}\nn\\
\leq&~K \|f\|_{p,\gamma}^2(1+\|x^h\|^{\tilde q\tilde p_{\gamma}+(1+\kappa)\gamma})\tau^{2\l}\sum_{j=\lceil\tau^{-1-(\l-\e)}\rceil}^{k-1}\big(\rho^{\Delta}(t_j)\big)^{\gamma}.
\end{align}
Inserting \eqref{th6.1.14+} and \eqref{th6.1.15+} into \eqref{th6.1.7+}   leads to
\begin{align}\label{I22+}
I_{2,2}(k)\leq&~ K\iota^2 e^{\frac{v^2\iota^2}{2}}\sup_{|\zeta|\leq\iota\epsilon}|Q(\zeta)|\|f\|_{p,\gamma}^2\Big(1+{\|x^h\|^{\tilde q(p\tilde q+2(1+\kappa)\gamma)}+\|x^h\|^{\tilde q\tilde p_{\gamma}+(1+\kappa)\gamma}}\Big)\nn\\
&~\times\Big(\tau^{2\e}+\tau^{2\l}\sum_{j=\lceil\tau^{-1-(\l-\e)}\rceil}^{k-1}\big(\rho^{\Delta}(t_j)\big)^{\gamma}+1\Big).
\end{align}
Note that \eqref{rho_remainder} implies 
$
\lim_{\e\rightarrow 0}\lim_{\tau\rightarrow 0}\tau^{2\l}\sum_{j=\lceil\tau^{-1-(\l-\e)}\rceil}^{k-1}\big(\rho^{\Delta}(t_j)\big)^{\gamma}=0.
$
This, along with \eqref{I22+} leads to
\begin{align}\label{th6.1.9}
\lim_{\epsilon\rightarrow 0}\lim_{\e\rightarrow 0}\lim_{\tau\rightarrow 0}I_{2,2}(k)=0.
\end{align}
By virtue of the condition \eqref{condip} there exists a positive constant $c$ with $c>\frac{2}{\l}\vee4$ such that 
$c\big(\frac{p\tilde q}{2}+(1+\frac{\kappa}{2})\gamma\big)\leq q$. 
Applying the H\"older inequality, the Chebyshev inequality, and \eqref{Zbound},  we have 
\begin{align}
I_{2,1}(k)\leq&~ \iota^2 e^{\frac{v^2\iota^2}{2}}\tau^{2\l}\sum_{j=0}^{k-1}\big(\E[|\mathcal Z^{x^h,\Delta}_{j+1}|^4]\big)^{\frac{1}{2}}\big(\PP\{|\mathcal Z^{x^h,\Delta}_{j+1}|> \epsilon\tau^{-\l}\}\big)^{\frac{1}{2}}\nn\\
\leq&~\iota^2 e^{\frac{v^2\iota^2}{2}}\frac{1}{\epsilon^{c/2}}\tau^{2\l+\frac{c\l}{2}}\sum_{j=0}^{k-1}\big(\E[|\mathcal Z^{x^h,\Delta}_{j+1}|^{4}]\big)^{\frac{1}{2}}\big(\E[|\mathcal Z^{x^h,\Delta}_{j+1}|^{c}]\big)^{\frac{1}{2}}\nn\\
\leq&~K\iota^2 e^{\frac{v^2\iota^2}{2}}\frac{1}{\epsilon^{c/2}}\tau^{2\l+\frac{c\l}{2}}k\|f\|^{\frac{4+c}{2}}_{p,\gamma}(1+\|x^h\|^{\frac{(4+c)\tilde q}{2}\left(\frac{p\tilde q}{2}+(1+\kappa)\gamma\right)})\nn\\
\leq&~K\iota^2 e^{\frac{v^2\iota^2}{2}}\frac{1}{\epsilon^{c/2}}\tau^{\frac{c\l}{2}-1}\|f\|^{\frac{4+c}{2}}_{p,\gamma}(1+\|x^h\|^{\frac{(4+c)\tilde q}{2}\left(\frac{p\tilde q}{2}+(1+\kappa)\gamma\right)}).\label{th6.1.10}
\end{align}
Plugging \eqref{th6.1.9} and \eqref{th6.1.10} into \eqref{th6.1.5}  yields
\begin{align}\label{th6.1.11}
\lim_{\tau\rightarrow 0}|I_2(k)|\leq &~\lim_{\epsilon\rightarrow 0}\lim_{\tau\rightarrow 0}K\iota^2 e^{\frac{v^2\iota^2}{2}}\frac{1}{\epsilon^{c/2}}\tau^{\frac{c\l}{2}-1}\|f\|^{\frac{4+c}{2}}_{p,\gamma}(1+\|x^h\|^{\frac{(4+c)\tilde q}{2}\left(\frac{p\tilde q}{2}+(1+\kappa)\gamma\right)})\nn\\
&~+\lim_{\epsilon\rightarrow 0}\lim_{\varepsilon\rightarrow 0}\lim_{\tau\rightarrow 0}|I_{2,2}(k)|=0.
\end{align}
\textbf{Estimate of  the term $I_3(k)$.}
To simplify notations, we denote
$\tilde k=\lceil \tau^{-1-\e}\rceil$ and
$M=\lceil k/\tilde k\rceil$. Divide $\{0,1,\ldots, k-1\}$ into $M$ blocks, and denote 
\begin{align*}
\mathbb B_{i}=&~\{(i-1)\tilde k,(i-1)\tilde k +1,\ldots, i\tilde k-1\}~~~~\mbox{for all} ~i\in\{1,\ldots, M-1\},\nn\\
\mathbb B_{M}=&~\{(M-1)\tilde k,\ldots, k-1\}.
\end{align*}
We rewrite the term $I_3(k)$ as
\begin{align}\label{th6.1.12}
I_3(k)
={}&\frac{\iota^2\tau^{2\l}}{2}\sum_{i=1}^{M}\E\Big[\exp{({\bf i} \iota\tau^{\l}\mathscr {M}^{x^h,\Delta}_{(i-1)\tilde k})}\sum_{j\in\mathbb B_{i}}e^{\frac{v^2\iota^2(j+1)}{2k}}\big(\frac{v^2}{k\tau^{2\l}}-(\mathcal Z^{x^h,\Delta}_{j+1})^2\big)\Big]\nn\\
&+\frac{\iota^2\tau^{2\l}}{2}\sum_{i=1}^{M}\sum_{j\in\mathbb B_{i}}\E\Big[\big(\exp{({\bf i} \iota\tau^{\l}\mathscr {M}^{x^h,\Delta}_{j})}-\exp{({\bf i} \iota\tau^{\l}\mathscr {M}^{x^h,\Delta}_{(i-1)\tilde k})}\big)e^{\frac{v^2\iota^2(j+1)}{2k}}\nn\\
&\times\big(\frac{v^2}{k\tau^{2\l}}-(\mathcal Z^{x^h,\Delta}_{j+1})^2\big)\Big]\nn\\
\leq{}& I_{3,1}(k)+I_{3,2}(k),
\end{align}
where
\begin{align*}
I_{3,1}(k):=&~\frac{\iota^2\tau^{2\l}}{2}\sum_{i=1}^{M}\E\Big[\exp{({\bf i} \iota\tau^{\l}\mathscr {M}^{x^h,\Delta}_{(i-1)\tilde k})}\sum_{j\in\mathbb B_{i}}e^{\frac{v^2\iota^2(j+1)}{2k}}\big(\frac{v^2}{k\tau^{2\l}}-(\mathcal Z^{x^h,\Delta}_{j+1})^2\big)\Big],\nn\\
I_{3,2}(k):=&~\frac{\iota^2\tau^{2\l}}{2}e^{\frac{v^2\iota^2}{2}}\sum_{i=1}^{M}\sum_{j\in\mathbb B_{i}}\E\Big[\big|\exp{\big({\bf i} \iota\tau^{\l}(\mathscr {M}^{x^h,\Delta}_{j}-\mathscr {M}^{x^h,\Delta}_{(i-1)\tilde k})\big)}-1\big|\big(\frac{v^2}{k\tau^{2\l}}+(\mathcal Z^{x^h,\Delta}_{j+1})^2\big)\Big].
\end{align*}
It follows from the property of the conditional expectation and the time-homogeneous Markov property of $\{\mathcal Z^{x^h,\Delta}_{k}\}_{k\in\mathbb N}$ that
\begin{align}\label{th6.1.16}
I_{3,1}(k)\leq {}&\frac{\iota^2\tau^{2\l}
}{2}\sum_{i=1}^{M}\E\Big[\Big|\E\Big[\sum_{j\in\mathbb B_{i}}e^{\frac{v^2\iota^2(j+1)}{2k}}\big(\frac{v^2}{k\tau^{2\l}}-(\mathcal Z^{x^h,\Delta}_{j+1})^2\big)\Big|\mathcal F_{t_{(i-1)\tilde k}}\Big]\Big|\Big]\nn\\
={}&\frac{\iota^2\tau^{2\l}
}{2}\sum_{i=1}^{M}\E\Big[\Big|\sum_{j\in\mathbb B_{i}}e^{\frac{v^2\iota^2(j+1)}{2k}}\big(\frac{v^2}{k\tau^{2\l}}-\E[(\mathcal Z^{y^h,\Delta}_{j+1-(i-1)\tilde k})^2]\big)\Big|_{y^h=Y^{x^h,\Delta}_{t_{(i-1)\tilde k}}}\Big|\Big]\nn\\
\leq&~\frac{\iota^2\tau^{2\l-\e}}{2}e^{\frac{v^2\iota^2}{2}}\sum_{i=1}^{M}\E\Big[\Big|\tau^{\e}\sum_{j\in\mathbb B_{1}}\Big|\frac{v^2}{k\tau^{2\l}}-\E[(\mathcal Z^{y^h,\Delta}_{j+1})^2]\Big|\Big|_{y^h=Y^{x^h,\Delta}_{t_{(i-1)\tilde k}}}\Big|\Big].
\end{align}
It is straightforward to see from Proposition \ref{p6.1}  (ii) that
\begin{align}\label{th6.1.13}
&\tau^{\e}\sum_{j\in\mathbb B_{1}}\Big|\frac{v^2}{k\tau^{2\l}}-\E[(\mathcal Z^{y^h,\Delta}_{j+1})^2]\Big|\nn\\
\leq{}& \tau^{\e}\sum_{j=0}^{\tilde k-1}\Big|\frac{v^2}{k\tau^{2\l}}-\mu^{\Delta}(H^{\Delta})\Big|+\tau^{\e}\sum_{j=0}^{\tilde k-1}|\mu^{\Delta}(H^{\Delta})-\E[(\mathcal Z^{y^h,\Delta}_{j+1})^2]|\nn\\
\leq{}&\tau^{\e+1} \tilde k|\frac{v^2}{k\tau^{2\l+1}}-v^2|+\tau^{\e+1}\tilde k|v^2-\frac{\mu^{\Delta}(H^{\Delta})}{\tau}|+\tau^{\e}\sum_{j=0}^{\lceil\tau^{-1-\frac{\e}{4}}\rceil-1}|\mu^{\Delta}(H^{\Delta})-\E[(\mathcal Z^{y^h,\Delta}_{j+1})^2]|\nn\\
&+\tau^{\e}\sum_{j=\lceil\tau^{-1-\frac{\e}{4}}\rceil}^{\tilde k-1}|\mu^{\Delta}(H^{\Delta})-\E[(\mathcal Z^{y^h,\Delta}_{j+1})^2]|\nn\\
\leq{}&\tau^{\e+1}\tilde kv^2|\frac{1}{k\tau^{2\l+1}}-1|+\tau^{\e+1}\tilde k|v^2-\frac{\mu^{\Delta}(H^{\Delta})}{\tau}|+\tau^{\e-1-\frac{\e}{4}}|\mu^{\Delta}(H^{\Delta})|\nn\\
&+\tau^{\e}\sum_{j=0}^{\lceil\tau^{-1-\frac{\e}{4}}\rceil-1}\E[(\mathcal Z^{y^h,\Delta}_{j+1})^2]
+\tau^{\e}\sum_{j=\lceil\tau^{-1-\frac{\e}{4}}\rceil}^{\tilde k-1}|\mu^{\Delta}(H^{\Delta})-P^{\Delta}_{t_j}H^{\Delta}(y^h)|.
\end{align}
Using the same techniques as \eqref{th6.1.14+} and \eqref{th6.1.15+} we derive
\begin{align}\label{th6.1.14}
\sum_{j=0}^{\lceil\tau^{-1-\frac{\e}{4}}\rceil-1}\E[(\mathcal {Z}^{y^h,\Delta}_{j+1})^2]
\leq K\|f\|^2_{p,\gamma}(1+\|y^h\|^{\tilde q(p\tilde q+2(1+\kappa)\gamma)})\tau^{-\frac{\e}{2}} 
\end{align}
and
\begin{align}\label{th6.1.15}
&\tau^{\e}\sum_{j=\lceil\tau^{-1-\frac{\e}{4}}\rceil}^{\tilde k-1}|\mu^{\Delta}(H^{\Delta})-P^{\Delta}_{t_j}H^{\Delta}(y^h)|\nn\\
\leq&~K \|f\|_{p,\gamma}^2(1+\|y^h\|^{\tilde q\tilde p_{\gamma}+(1+\kappa)\gamma})\tau^{\e}\sum_{j=\lceil\tau^{-1-\frac{\e}{4}}\rceil}^{\tilde k-1}\big(\rho^{\Delta}(t_j)\big)^{\gamma}.
\end{align}
Inserting \eqref{th6.1.14} and \eqref{th6.1.15} into \eqref{th6.1.13} and using $|\mu^{\Delta}(H^{\Delta})|\leq K\tau\|f\|^{2}_{p,\gamma}$, we arrive at
\begin{align}\label{th6.1.17}
&\tau^{\e}\sum_{j\in\mathbb B_{1}}|\frac{v^2}{k\tau^{2\l}}-\E[(\mathcal Z^{y^h,\Delta}_{j+1})^2]|\nn\\
\leq&~\tau^{\e+1}\tilde kv^2|\frac{1}{k\tau^{2\l+1}}-1|+\tau^{\e+1}\tilde k|v^2-\frac{\mu^{\Delta}(H^{\Delta})}{\tau}|+\tau^{\frac{3\e}{4}}K\|f\|^{2}_{p,\gamma}\nn\\
&+K\|f\|^2_{p,\gamma}(1+\|y^h\|^{\tilde q(p\tilde q+2(1+\kappa)\gamma)})\tau^{\frac{\e}{2}}\nn\\
&+K \|f\|_{p,\gamma}^2(1+\|y^h\|^{\tilde q\tilde p_{\gamma}+(1+\kappa)\gamma})\tau^{\e}\sum_{j=\lceil\tau^{-1-\frac{\e}{4}}\rceil}^{\tilde k-1}\big(\rho^{\Delta}(t_j)\big)^{\gamma}.
\end{align}
Since $\tilde q(p\tilde q+2(1+\kappa)\gamma)\leq q$ and $\tilde q\tilde p_{\gamma}+(1+\kappa)\gamma\leq q$, plugging \eqref{th6.1.17} into \eqref{th6.1.16}  and applying Assumption \ref{a2} (\romannumeral1), we have
\begin{align*}
I_{3,1}(k)\leq{}&K\iota^2e^{\frac{v^2\iota^2}{2}}\Big(v^2|\frac{1}{k\tau^{2\l+1}}-1|+|v^2-\frac{\mu^{\Delta}(H^{\Delta})}{\tau}|+\tau^{\frac{3\e}{4}}\|f\|^{2}_{p,\gamma}\nn\\
&+\|f\|^2_{p,\gamma}(1+\|x^h\|^{\tilde q^2(p\tilde q+2(1+\kappa)\gamma)})\tau^{\frac{\e}{2}}\nn\\
&+\|f\|_{p,\gamma}^2(1+\|x^h\|^{\tilde q(q\tilde p_{\gamma}+(1+\kappa)\gamma)})\tau^{\e}\sum_{j=\lceil\tau^{-1-\frac{\e}{4}}\rceil}^{\tilde k-1}\big(\rho^{\Delta}(t_j)\big)^{\gamma}\Big),
\end{align*}
where we used
$k=\lceil \tau^{-2\l-1}\rceil$, $\tilde k=\lceil \tau^{-1-\e}\rceil$, and
$M=\lceil k/\tilde k\rceil=\mathcal O(\tau^{-2\l+\e})$. 
It follows from \eqref{rho_remainder} that
$
\lim_{\e\rightarrow 0}\lim_{\tau\rightarrow 0}\tau^{\e}\sum_{j=\lceil\tau^{-1-\frac{\e}{4}}\rceil}^{\tilde k-1}\big(\rho^{\Delta}(t_j)\big)^{\gamma}=0,
$
This, along with 
$
p(1+\tilde q\vee\tilde r)+2(1+\kappa\vee\beta)\gamma\leq q\wedge r
$ and Proposition \ref{p6.2} implies that
\begin{align}\label{th6.1.19}
\lim_{|\Delta|\rightarrow0}I_{3,1}(k)=0.
\end{align}
Furthermore, by $|e^{{\bf i}u}-1|\leq u$ and  the H\"older inequality, one has that for any $\e_1>0$,
\begin{align}\label{th6.1.20}
I_{3,2}(k)\leq
{}&\e_1\iota \frac{\iota^2\tau^{2\l}}{2}e^{\frac{v^2\iota^2}{2}}\sum_{i=1}^{M}\sum_{j\in\mathbb B_{i}}\big(\frac{v^2}{k\tau^{2\l}}+\E[(\mathcal Z^{x^h,\Delta}_{j+1})^2]\big)\nn\\
&+\iota^2\tau^{2\l}e^{\frac{v^2\iota^2}{2}}\sum_{i=1}^{M}\sum_{j\in\mathbb B_{i}}\E\Big[\big(\frac{v^2}{k\tau^{2\l}}+(\mathcal Z^{x^h,\Delta}_{j+1})^2\big)\textbf 1_{\{\tau^{\l}|\mathscr {M}^{x^h,\Delta}_{j}-\mathscr {M}^{x^h,\Delta}_{(i-1)\tilde k}|>\e_1\}}\Big]\nn\\
\leq{}&K\e_1 \iota^3e^{\frac{v^2\iota^2}{2}}+\e_1 \frac{\iota^3\tau^{2\l}}{2}e^{\frac{v^2\iota^2}{2}}\sum_{j=0}^{k-1}\E\big[H^{\Delta}(Y^{x^h,\Delta}_{t_j})\big]\nn\\
&+K\iota^2\tau^{2\l}e^{\frac{v^2\iota^2}{2}}\sum_{i=1}^{M}\sum_{j\in\mathbb B_{i}}\big(v^4+\E[(\mathcal Z^{x^h,\Delta}_{j+1})^4]\big)^{\frac{1}{2}}\big(\PP\{\tau^{\l}|\mathscr {M}^{x^h,\Delta}_{j}-\mathscr {M}^{x^h,\Delta}_{(i-1)\tilde k}|>\e_1\}\big)^{\frac{1}{2}}.
\end{align}
It follows from  \eqref{trmix1} and the definition of $\mathscr {M}^{x^h,\Delta}_{i}$  that for any $j\in\mathbb B_{i}\setminus\{(i-1)\tilde k\}$,
\begin{align*}
&|\mathscr {M}^{x^h,\Delta}_{j}-\mathscr {M}^{x^h,\Delta}_{(i-1)\tilde k}|\nn\\
\leq{}&\tau\Big|\sum_{l=(i-1)\tilde k}^{j-1}\big(f(Y_{t_l}^{x^h,\Delta})-\mu^{\Delta}(f)\big)\Big|+\tau\sum_{l=0}^{\infty}\big|P^{\Delta}_{t_l}f(Y_{t_j}^{x^h,\Delta})-\mu^{\Delta}(f)\big|\nn\\
&+\tau\sum_{l=0}^{\infty}\big|P^{\Delta}_{t_l}f(Y_{t_{(i-1)\tilde k}}^{x^h,\Delta})-\mu^{\Delta}(f)\big|\nn\\
\leq{}&\tau\Big|\!\!\sum_{l=(i-1)\tilde k}^{j-1}\!\!\big(f(Y_{t_l}^{x^h,\Delta})-\mu^{\Delta}(f)\big)\Big|\!+\!
K\|f\|_{p,\gamma}(1\!+\!\|Y^{x^h,\Delta}_{t_j}\|^{\frac{p\tilde q}{2}+(1+\kappa)\gamma}\!+\!\|Y^{x^h,\Delta}_{t_{(i-1)\tilde k}}\|^{\frac{p\tilde q}{2}+(1+\kappa)\gamma}).
\end{align*}
Let $\e\ll1$ such that $\lceil\frac{1+\sqrt\e}{\l}\rceil\leq \frac{1}{\l}+1$.
This, along with the condition \eqref{condip} implies that  $2\lceil 3\vee\frac{1+\sqrt\e}{\l}\rceil\left(\frac{p\tilde q}{2}+(1+\kappa)\gamma\right)\leq q$.
Combining the Chebyshev inequality and Assumption \ref{a2} (i) we yield  
\begin{align}\label{th6.1.21}
&\PP{\{\tau^{\l}|\mathscr {M}^{x^h,\Delta}_{j}-\mathscr {M}^{x^h,\Delta}_{(i-1)\tilde k}|>\e_1\}}\nn\\
\leq{}&(\frac{\tau^{\l}}{\e_1})^{2\lceil 3\vee\frac{1+\sqrt\e}{\l}\rceil}\E\Big[|\mathscr {M}^{x^h,\Delta}_{j}-\mathscr {M}^{x^h,\Delta}_{(i-1)\tilde k}|^{2\lceil 3\vee\frac{1+\sqrt\e}{\l}\rceil}\Big]\nn\\
\leq{}&K(\frac{\tau^{\l+1}}{\e_1})^{2\lceil 3\vee\frac{1+\sqrt\e}{\l}\rceil}\E\Big[\big|\sum_{l=(i-1)\tilde k}^{j-1}\big(f(Y_{t_l}^{x^h,\Delta})-\mu^{\Delta}(f)\big)\big|^{2\lceil 3\vee\frac{1+\sqrt\e}{\l}\rceil}\Big]\nn\\
&+K\|f\|_{p,\gamma}^{{2\lceil 3\vee\frac{1+\sqrt\e}{\l}\rceil}}(\frac{\tau^{\l}}{\e_1})^{2\lceil 3\vee\frac{1+\sqrt\e}{\l}\rceil}\Big(1+\sup_{n\geq0}\E\big[\|Y^{x^h,\Delta}_{t_n}\|^{2\lceil 3\vee\frac{1+\sqrt\e}{\l}\rceil\left(\frac{p\tilde q}{2}+(1+\kappa)\gamma\right)}\big]\Big)\nn\\
\leq{}&K(\frac{\tau^{\l+1}}{\e_1})^{2\lceil 3\vee\frac{1+\sqrt\e}{\l}\rceil}\E\Big[\E\Big[\big|\sum_{l=0}^{j-(i-1)\tilde k-1}\big(f(Y_{t_l}^{y^h,\Delta})-\mu^{\Delta}(f)\big)\big|^{2\lceil 3\vee\frac{1+\sqrt\e}{\l}\rceil}\Big]\Big|_{y^h=Y^{x^h,\Delta}_{t_{(i-1)\tilde k}}}\Big]\nn\\
&+K\|f\|_{p,\gamma}^{{2\lceil 3\vee\frac{1+\sqrt\e}{\l}\rceil}}(\frac{\tau^{\l}}{\e_1})^{2\lceil 3\vee\frac{1+\sqrt\e}{\l}\rceil}(1+\|x^h\|^{2\tilde q\lceil 3\vee\frac{1+\sqrt\e}{\l}\rceil\left(\frac{p\tilde q}{2}+(1+\kappa)\gamma\right)}).
\end{align}
Note that the condition of \eqref{condip} leads to  that  the one on $p$ in Lemma \ref{l5.1} holds with $\varrho=\lceil 3\vee\frac{1+\sqrt\e}{\l}\rceil$. By virtue of Lemma \ref{l5.1} with $\varrho=\lceil 3\vee\frac{1+\sqrt\e}{\l}\rceil$, we obtain
\begin{align*}
&\E\Big[\big|\sum_{l=0}^{j-(i-1)\tilde k-1}\big(f(Y_{t_l}^{y^h,\Delta})-\mu^{\Delta}(f)\big)\big|^{2\lceil 3\vee\frac{1+\sqrt\e}{\l}\rceil}\Big]\nn\\
={}&\big(j-(i-1)\tilde k\big)^{2\lceil 3\vee\frac{1+\sqrt\e}{\l}\rceil}\E\Big[\big|\frac{1}{j-(i-1)\tilde k}\sum_{l=0}^{j-(i-1)\tilde k-1}\big(f(Y_{t_l}^{y^h,\Delta})-\mu^{\Delta}(f)\big)\big|^{2\lceil 3\vee\frac{1+\sqrt\e}{\l}\rceil}\Big]\nn\\
\leq{}& K\|f\|^{2\lceil 3\vee\frac{1+\sqrt\e}{\l}\rceil}_{p,\gamma}\big(j-(i-1)\tilde k\big)^{2\lceil 3\vee\frac{1+\sqrt\e}{\l}\rceil} (1+\|y^h\|^{\lceil 3\vee\frac{1+\sqrt\e}{\l}\rceil\tilde q\left(\frac{p}{2}(1+\tilde q)+(1+\kappa)\gamma\right)})t_{j-(i-1)\tilde k}^{-\lceil 3\vee\frac{1+\sqrt\e}{\l}\rceil}.
\end{align*}
This, along with \eqref{th6.1.21}, $\tilde q\lceil 3\vee\frac{1+\sqrt\e}{\l}\rceil\left(\frac{p}{2}(1+\frac{\tilde q}{q})+(1+\kappa)\gamma\right)\leq q$, and Assumption \ref{a2} (\romannumeral1) implies that 
\begin{align*}
&\sum_{j\in\mathbb B_{i}}\big(\PP{\{\tau^{\l}|\mathscr {M}^{x^h,\Delta}_{j}-\mathscr {M}^{x^h,\Delta}_{(i-1)\tilde k}|>\e_1\}}\big)^{\frac{1}{2}}\nn\\
\leq{}&K\|f\|^{\lceil 3\vee\frac{1+\sqrt\e}{\l}\rceil}_{p,\gamma}\sum_{j\in\mathbb B_{i}}(\frac{\tau^{\l+1}(j-(i-1)\tilde k)}{\e_1})^{\lceil 3\vee\frac{1+\sqrt\e}{\l}\rceil}\big(\E[\|Y^{x^h,\Delta}_{t_{(i-1)\tilde k}}\|^{\tilde q\lceil 3\vee\frac{1+\sqrt\e}{\l}\rceil\left(\frac{p}{2}(1+\tilde q)+(1+\kappa)\gamma\right)}]\nn\\
&+1\big)^{\frac{1}{2}} t_{j-(i-1)\tilde k}^{-\lceil 3\vee\frac{1+\sqrt\e}{\l}\rceil/2}+K\|f\|_{p,\gamma}^{{\lceil 3\vee\frac{1+\sqrt\e}{\l}\rceil}}(\frac{\tau^{\l}}{\e_1})^{\lceil 3\vee\frac{1+\sqrt\e}{\l}\rceil}(1+\|x^h\|^{\tilde q\lceil 3\vee\frac{1+\sqrt\e}{\l}\rceil\left(\frac{p\tilde q}{2}+(1+\kappa)\gamma\right)})\nn\\
\leq{}&K\|f\|^{\lceil 3\vee\frac{1+\sqrt\e}{\l}\rceil}_{p,\gamma}(1+\|x^h\|^{\lceil 3\vee\frac{1+\sqrt\e}{\l}\rceil\frac{\tilde q^2}{2}\left(\frac{p}{2}(1+\tilde q)+(1+\kappa)\gamma\right)})\sum_{j\in\mathbb B_{1}}(\frac{\tau^{\l+1}j}{\e_1})^{\lceil 3\vee\frac{1+\sqrt\e}{\l}\rceil}t_{j}^{-\lceil 3\vee\frac{1+\sqrt\e}{\l}\rceil/2}\nn\\
&+K\|f\|_{p,\gamma}^{{\lceil 3\vee\frac{1+\sqrt\e}{\l}\rceil}}(\frac{\tau^{\l}}{\e_1})^{\lceil 3\vee\frac{1+\sqrt\e}{\l}\rceil}(1+\|x^h\|^{\tilde q\lceil 3\vee\frac{1+\sqrt\e}{\l}\rceil\left(\frac{p\tilde q}{2}+(1+\kappa)\gamma\right)})\nn\\
\leq{}&K\|f\|^{\lceil 3\vee\frac{1+\sqrt\e}{\l}\rceil}_{p,\gamma}(1+\|x^h\|^{\lceil 3\vee\frac{1+\sqrt\e}{\l}\rceil\tilde q^2\left(\frac{p\tilde q}{2}+(1+\kappa)\gamma\right)})\Big((\frac{\tau^{\l}}{\e_1})^{\lceil 3\vee\frac{1+\sqrt\e}{\l}\rceil}\sum_{j=0}^{\lceil\tau^{-1}\rceil-1}t_{j}^{\lceil 3\vee\frac{1+\sqrt\e}{\l}\rceil/2}\nn\\
&+(\frac{\tau^{\l-\e}}{\e_1})^{\lceil 3\vee\frac{1+\sqrt\e}{\l}\rceil}\sum_{j=\lceil\tau^{-1}\rceil}^{\tilde k-1}(\frac{j}{\tau^{-1-\e}})^{\lceil 3\vee\frac{1+\sqrt\e}{\l}\rceil}t_{j}^{-\lceil 3\vee\frac{1+\sqrt\e}{\l}\rceil/2}+(\frac{\tau^{\l}}{\e_1})^{\lceil 3\vee\frac{1+\sqrt\e}{\l}\rceil}\Big)\nn\\
\leq{}&\frac{1}{\e_1^{\lceil 3\vee\frac{1+\sqrt\e}{\l}\rceil}}K\|f\|^{\lceil 3\vee\frac{1+\sqrt\e}{\l}\rceil}_{p,\gamma}(1+\|x^h\|^{\lceil 3\vee\frac{1+\sqrt\e}{\l}\rceil\tilde q^2\left(\frac{p\tilde q}{2}+(1+\kappa)\gamma\right)})\Big(\int_{0}^{2}\zeta^{\lceil 3\vee\frac{1+\sqrt\e}{\l}\rceil/2}\mathrm d\zeta\nn\\
&+\tau^{(\l-\e)\lceil 3\vee\frac{1+\sqrt\e}{\l}\rceil-1}\int_{1}^{\infty}\zeta^{-\lceil 3\vee\frac{1+\sqrt\e}{\l}\rceil/2}\mathrm d\zeta+\tau\Big)\nn\\
\leq{}&\frac{1}{\e_1^{\lceil 3\vee\frac{1+\sqrt\e}{\l}\rceil}}K\|f\|^{\lceil 3\vee\frac{1+\sqrt\e}{\l}\rceil}_{p,\gamma}(1+\|x^h\|^{\lceil 3\vee\frac{1+\sqrt\e}{\l}\rceil\tilde q^2\left(\frac{p\tilde q}{2}+(1+\kappa)\gamma\right)})(1+\tau^{\frac{(\l-\e)(1+\sqrt \e)}{\l}-1}+\tau).
\end{align*}
Inserting the above inequality into \eqref{th6.1.20} leads to
\begin{align*}
I_{3,2}(k)
\leq{}& K\e_1 \iota^3e^{\frac{v^2\iota^2}{2}} +\e_1 \frac{\iota^3\tau^{2\l}}{2}e^{\frac{v^2\iota^2}{2}}\sum_{j=0}^{k-1}\E\big[H^{\Delta}(Y^{x^h,\Delta}_{t_j})\big]\nn\\
&+K\iota^2e^{\frac{v^2\iota^2}{2}}\big(v^4+\sup_{n\geq0}\E[(\mathcal Z^{x^h,\Delta}_{n})^4]\big)^{\frac{1}{2}}\frac{1}{\e_1^{\lceil 3\vee\frac{1+\sqrt\e}{\l}\rceil}}\|f\|^{\lceil 3\vee\frac{1+\sqrt\e}{\l}\rceil}_{p,\gamma}\nn\\
&\times(1+\|x^h\|^{\lceil 3\vee\frac{1+\sqrt\e}{\l}\rceil\tilde q^2\left(\frac{p\tilde q}{2}+(1+\kappa)\gamma\right)})(1+\tau^{\sqrt \e-\frac{\e}{\l}-\frac{\e\sqrt \e}{\l}}+\tau)M\tau^{2\l}.
\end{align*}
Recalling $M=\mathcal O(\tau^{-2\l+\e})$, letting $\e\in(0, 1)$ satisfy $\sqrt \e-\frac{\e}{\l}-\frac{\e\sqrt \e}{\l}>0$, and using \eqref{Zbound} and \eqref{th6.1.7+}--\eqref{th6.1.15+} yield
\begin{align}\label{th6.1.22}
\lim_{\e_1\rightarrow 0}\lim_{\tau\rightarrow 0}I_{3,2}(k)=0.
\end{align}
We conclude from \eqref{th6.1.12}, \eqref{th6.1.19}, and \eqref{th6.1.22}  that
\begin{align}\label{th6.1.23}
\lim_{|\Delta|\rightarrow 0}I_{3}(k)=0.
\end{align}
The desired argument \eqref{claimM} follows from \eqref{th6.1.3}, \eqref{th6.1.4}, \eqref{th6.1.9}, and \eqref{th6.1.23}.

\textit{Step 2: Prove that $\frac{1}{\sqrt{k\tau}}\mathscr R^{x^h,\Delta}_{k}$ converges in probability to $0$.} Recalling $k=\lceil\tau^{-2\l-1}\rceil,$ it is equivalent to show that $\tau^{\l}\mathscr R^{x^h,\Delta}_{\lceil\tau^{-2\l-1}\rceil}$ converges in probability to $0$. Let $\mathscr R^{x^h,\Delta}_{k}=\mathscr R^{x^h,\Delta}_{k,1}+\mathscr R^{x^h,\Delta}_{k,2},$ where 
 $$\mathscr R^{x^h,\Delta}_{k,1}:=-\tau\sum_{i=k}^{\infty}\Big(\E\big[f(Y_{t_i}^{x^h,\Delta})|\mathcal F_{t_{k}}\big]-\mu^{\Delta}(f)\Big)+\tau\sum_{i=0}^{\infty}\Big(\E\big[f(Y_{t_i}^{x^h,\Delta})|\mathcal F_{0}\big]-\mu^{\Delta}(f)\Big)$$ and $$\mathscr R^{x^h,\Delta}_{k,2}:=k\tau(\mu^{\Delta}(f)-\mu(f)).$$ 
By \eqref{trmix1} we deduce
\begin{align*}
|\mathscr R^{x^h,\Delta}_{k,1}|\leq&~\tau\sum_{i=0}^{\infty}\big|P_{t_{i}}^{\Delta}f(Y_{t_k}^{x^h,\Delta})-\mu^{\Delta}(f)\big|
+\tau\sum_{i=0}^{\infty}\big|P_{t_{i}}^{\Delta}f(x^h)-\mu^{\Delta}(f)\big|\nn\\
\leq&~K\|f\|_{p,\gamma}(1+\|Y_{t_k}^{x^h,\Delta}\|^{\frac{p\tilde q}{2}+(1+\kappa)\gamma})\tau\sum_{i=0}^{\infty}(\rho^{\Delta}(t_i))^{\gamma}\nn\\
&+K\|f\|_{p,\gamma}(1+\|x^h\|^{\frac{p\tilde q}{2}+(1+\kappa)\gamma})\tau\sum_{i=0}^{\infty}(\rho^{\Delta}(t_i))^{\gamma}\nn\\
\leq&~K\|f\|_{p,\gamma}(1+\|Y_{t_k}^{x^h,\Delta}\|^{\frac{p\tilde q}{2}+(1+\kappa)\gamma}+\|x^h\|^{\frac{p\tilde q}{2}+(1+\kappa)\gamma}).
\end{align*}
Taking expectation on both sides of $|\mathscr R^{x^h,\Delta}_{k,1}|$ and using Assumption \ref{a2} (\romannumeral1), we derive 
\begin{align*}
\E[|\mathscr R^{x^h,\Delta}_{k,1}|]\leq&~ K\|f\|_{p,\gamma}(1+\sup_{k\geq 0}\E[\|Y^{x^h,\Delta}_{t_{k}}\|^{\frac{p\tilde q}{2}+(1+\kappa)\gamma}]+\|x^h\|^{\frac{p\tilde q}{2}+(1+\kappa)\gamma})\nn\\
\leq&~ K\|f\|_{p,\gamma}(1+\|x^h\|^{\tilde q(\frac{p\tilde q}{2}+(1+\kappa)\gamma)}).
\end{align*}
This, along with $\frac{p\tilde q}{2}+(1+\kappa)\gamma\leq q$ and Assumption \ref{a2} (i) leads to that 
\begin{align}\label{R+}
\lim_{|\Delta|\rightarrow 0}\frac{1}{\sqrt{k\tau}}\E[|\mathscr R^{x^h,\Delta}_{k,1}|]\leq \lim_{|\Delta|\rightarrow 0}K\|f\|_{p,\gamma}(1+\|x^h\|^{\tilde q(\frac{p\tilde q}{2}+(1+\kappa)\gamma)})\tau^{\l}=0.
\end{align}
It follows from \eqref{t3.1} that
\begin{align*}
\frac{1}{\sqrt{k\tau}}|\mathscr R^{x^h,\Delta}_{k,2}|=\sqrt{k\tau}\big|\mu^{\Delta}(f)-\mu(f)\big|
\leq K\|f\|_{p,\gamma}\sqrt{k\tau}(h^{\a_1\gamma}+\tau^{\a_2\gamma}).
\end{align*}
Since 
 $h=\tau^{\a_2/\a_1}$ when  $h\neq0$,
 one has
$$
K\|f\|_{p,\gamma}\sqrt{k\tau}(h^{\a_1\gamma}+\tau^{\a_2\gamma})
=K\|f\|_{p,\gamma}\tau^{\a_2\gamma-\lambda}\stackrel{\tau\rightarrow0}{\longrightarrow}0
$$
by letting $\lambda\in(0,\a_2\gamma)$. 

Combining \textit{Steps 1--2} finishes the proof.
\end{proof}

\section{Proofs of Propositions in Section \ref{sec_CLT}}\label{proof_prop} 
This section gives proofs of Propositions \ref{Prop4.1}--\ref{p6.2}.
\begin{proof}[\textbf{Proof of Proposition \ref{Prop4.1}}.]
It follows from the definition of $\mathscr M^{x^h,\Delta}_{k}$ (see \eqref{mar_eq}) that
\begin{align}\label{p6.1.1}
\mathscr M^{x^h,\Delta}_{k}=&~\tau\sum_{i=0}^{k-1}\big(f(Y_{t_i}^{x^h,\Delta})-\mu^{\Delta}(f)\big) +\tau\sum_{i=k}^{\infty}\Big(\E\big[f(Y_{t_i}^{x^h,\Delta})|\mathcal F_{t_{k}}\big]-\mu^{\Delta}(f)\Big)\nn\\
&-\tau\sum_{i=0}^{\infty}\Big(\E\big[f(Y_{t_i}^{x^h,\Delta})|\mathcal F_{0}\big]-\mu^{\Delta}(f)\Big)\nn\\
=&~\tau\sum_{i=0}^{k-1}\big(f(Y_{t_i}^{x^h,\Delta})-\mu^{\Delta}(f)\big)+\tau\sum_{i=0}^{\infty}\big(P^{\Delta}_{t_i}f(Y_{t_k}^{x^h,\Delta})-\mu^{\Delta}(f)\big)\nn\\
&-\tau\sum_{i=0}^{\infty}\big(P^{\Delta}_{t_i}f(x^h)-\mu^{\Delta}(f)\big).
\end{align}
By  \eqref{trmix1},  we have 
\begin{align*}
&|\mathscr M^{x^h,\Delta}_{k}|\nn\\
\leq&~\tau\big|\sum_{i=0}^{k-1}\big(f(Y_{t_i}^{x^h,\Delta})-\mu^{\Delta}(f)\big)\big|+K\|f\|_{p,\gamma}(1+\|Y_{t_k}^{x^h,\Delta}\|^{\frac{p\tilde q}{2}+(1+\kappa)\gamma})\tau\sum_{i=0}^{\infty}(\rho^{\Delta}(t_i))^{\gamma}\nn\\
&+K\|f\|_{p,\gamma}(1+\|x^h\|^{\frac{p\tilde q}{2}+(1+\kappa)\gamma})\tau\sum_{i=0}^{\infty}(\rho^{\Delta}(t_i))^{\gamma}.
\end{align*}
Using the conditions $\gamma\in[\gamma_2,1]$ and $\frac{p\tilde q}{2}+(1+\kappa)\gamma\leq q$, and  Assumption \ref{a2} leads to
\begin{align*}
\E[|\mathscr M^{x^h,\Delta}_{k}|]
\leq&{} K\|f\|_{p,\gamma}k(1+\sup_{i\geq0}\E[\|Y_{t_i}^{x^h,\Delta}\|^{\frac{p}{2}}])+K\|f\|_{p,\gamma}(1+\E[\|Y_{t_k}^{x^h,\Delta}\|^{\frac{p\tilde q}{2}+(1+\kappa)\gamma}])\nn\\
&+K\|f\|_{p,\gamma}(1+\|x^h\|^{\frac{p\tilde q}{2}+(1+\kappa)\gamma})
\leq~K\|f\|_{p,\gamma}k(1+\|x^h\|^{\tilde q(\frac{p\tilde q}{2}+(1+\kappa)\gamma)})<\infty.
\end{align*}
In addition, by \eqref{p6.1.1}, we have
\begin{align}\label{p6.1.3}
\mathscr M^{x^h,\Delta}_{k}-\mathscr M^{x^h,\Delta}_{k-1}=&~\tau\big(f(Y_{t_{k-1}}^{x^h,\Delta})-\mu^{\Delta}(f)\big)+\tau\sum_{i=0}^{\infty}\big(P^{\Delta}_{t_i}f(Y_{t_k}^{x^h,\Delta})-\mu^{\Delta}(f)\big)\nn\\
&-\tau\sum_{i=0}^{\infty}\big(P^{\Delta}_{t_i}f(Y_{t_{k-1}}^{x^h,\Delta})-\mu^{\Delta}(f)\big),
\end{align}
which gives
\begin{align}\label{p6.1.2}
&\E\Big[(\mathscr M^{x^h,\Delta}_{k+j}-\mathscr M^{x^h,\Delta}_{k})|\mathcal F_{t_k}\Big]\nn\\
=&~\tau\sum_{i=k}^{k+j-1}\Big(\E\big[f(Y_{t_{i}}^{x^h,\Delta})|\mathcal F_{t_k}\big]-\mu^{\Delta}(f)\Big)+\tau\E\Big[\sum_{i=0}^{\infty}\big(P^{\Delta}_{t_i}f(Y_{t_{k+j}}^{x^h,\Delta})-\mu^{\Delta}(f)\big)\big|\mathcal F_{t_k}\Big]\nn\\
&-\tau\sum_{i=0}^{\infty}\big(P^{\Delta}_{t_i}f(Y_{t_{k}}^{x^h,\Delta})-\mu^{\Delta}(f)\big)\nn\\
=&~\tau\sum_{i=0}^{j-1}\big(P^{\Delta}_{t_i}f(Y_{t_{k}}^{x^h,\Delta})-\mu^{\Delta}(f)\big)+\tau P^{\Delta}_{t_j}\sum_{i=0}^{\infty}\big(P^{\Delta}_{t_i}f-\mu^{\Delta}(f)\big)(Y_{t_{k}}^{x^h,\Delta})\nn\\
&-\tau\sum_{i=0}^{\infty}\big(P^{\Delta}_{t_i}f(Y_{t_{k}}^{x^h,\Delta})-\mu^{\Delta}(f)\big).
\end{align}
Applying the dominated convergence theorem yields
\begin{align*}
\tau P^{\Delta}_{t_j}\sum_{i=0}^{\infty}\big(P^{\Delta}_{t_i}f-\mu^{\Delta}(f)\big)(Y_{t_{k}}^{x^h,\Delta})=&~\tau\sum_{i=0}^{\infty}\big(P^{\Delta}_{t_{i+j}}f-\mu^{\Delta}(f)\big)(Y_{t_{k}}^{x^h,\Delta})\nn\\
=&~\tau\sum_{i=j}^{\infty}\big(P^{\Delta}_{t_{i}}f(Y_{t_{k}}^{x^h,\Delta})-\mu^{\Delta}(f)\big).
\end{align*}
Inserting the above equality into \eqref{p6.1.2} leads to 
$\E\Big[(\mathscr M^{x^h,\Delta}_{k+j}-\mathscr M^{x^h,\Delta}_{k})|\mathcal F_{t_k}\Big]=0.$ This finishes the proof.
\end{proof}

\begin{proof}[\textbf{Proof of Proposition \ref{p6.1}}.]
(\romannumeral1) 
It follows from \eqref{trmub}, \eqref{trmix1},  \eqref{p6.1.3},  Assumption \ref{a2}, and the condition $\gamma\in[\gamma_2,1]$ that
\begin{align}\label{Z++}
|\mathcal Z^{x^h,\Delta}_{k}|\leq&~\tau\big|f(Y_{t_{k-1}}^{x^h,\Delta})-\mu^{\Delta}(f)\big|+\tau\sum_{i=0}^{\infty}\big|P^{\Delta}_{t_i}f(Y_{t_k}^{x^h,\Delta})-\mu^{\Delta}(f)\big|\nn\\
&+\tau\sum_{i=0}^{\infty}\big|P^{\Delta}_{t_i}f(Y_{t_{k-1}}^{x^h,\Delta})-\mu^{\Delta}(f)\big|\nn\\
\leq&~K\|f\|_{p,\gamma}\tau\big(1+\|Y_{t_k}^{x^h,\Delta}\|^{\frac{p}{2}}\big)+K\|f\|_{p,\gamma}\Big(1+\|Y_{t_k}^{x^h,\Delta}\|^{\frac{p\tilde q}{2}+(1+\kappa)\gamma}\nn\\
&+\|Y^{x^h,\Delta}_{t_{k-1}}\|^{\frac{p\tilde q}{2}+(1+\kappa)\gamma}\Big).
\end{align}
Taking any constant $c$ satisfying
$c\big(\frac{p\tilde q}{2}+(1+\kappa)\gamma\big)\leq q$ and using Assumption \ref{a2} (i)
finish the proof of (\romannumeral1).

(\romannumeral2) The condition $p\tilde q+2(1+\kappa)\gamma\leq q$ coincides with the one in (\romannumeral1) with $c=2,$ thus $\E\big[|\mathcal Z^{x^h,\Delta}_{l+1}|^2|\mathcal F_{t_{l}}\big]$ is well-defined. 
It follows from \eqref{p6.1.3} that
\begin{align}\label{Vrewri}
\E\big[|\mathcal Z^{x^h,\Delta}_{l+1}|^2|\mathcal F_{t_{l}}\big]
=&~\E\Big[\big|\tau\big(f(y^h)-\mu^{\Delta}(f)\big)+\tau\sum_{i=0}^{\infty}\big(P^{\Delta}_{t_i}f(Y_{t_1}^{y^h,\Delta})-\mu^{\Delta}(f)\big)\nn\\
&~-\tau\sum_{i=0}^{\infty}\big(P^{\Delta}_{t_i}f(y^h)-\mu^{\Delta}(f)\big)\big|^2\Big]\Big|_{y^h=Y^{x^h,\Delta}_{t_{l}}}=:G^{\Delta}(Y^{x^h,\Delta}_{t_{l}}).
\end{align}
For any $u\in E^h$,
\begin{align}
 G^{\Delta}(u)
=&~\tau^2|f(u)-\mu^{\Delta}(f)|^2+\tau^2 \Big|\sum_{i=0}^{\infty}\big(P^{\Delta}_{t_i}f(u)-\mu^{\Delta}(f)\big)\Big|^2\nn\\
&+\tau^2\E\Big[\Big|\sum_{i=0}^{\infty}\big(P^{\Delta}_{t_i}f(Y^{u,\Delta}_{t_1})-\mu^{\Delta}(f)\big)\Big|^2\Big]\nn\\
&+2\tau^2\big(f(u)-\mu^{\Delta}(f)\big) P^{\Delta}_{t_1}\Big(\sum_{i=0}^{\infty}\big(P^{\Delta}_{t_i}f-\mu^{\Delta}(f)\big)\Big)(u)\nn\\
&-2\tau^2\big( f(u)-\mu^{\Delta}(f)\big)\sum_{i=0}^{\infty}\big(P^{\Delta}_{t_i}f(u)-\mu^{\Delta}(f)\big)\nn\\
&-2\tau^2\sum_{i=0}^{\infty}\big(P^{\Delta}_{t_i}f(u)-\mu^{\Delta}(f)\big)P^{\Delta}_{t_1}\Big(\sum_{j=0}^{\infty}\big(P^{\Delta}_{t_j}f-\mu^{\Delta}(f)\big)\Big)(u).\label{+l6.1.1}
\end{align}
Applying  the dominated convergence theorem yields 
\begin{align}\label{l6.1+1}
&\big(f(u)-\mu^{\Delta}(f)\big) P^{\Delta}_{t_1}\Big(\sum_{i=0}^{\infty}\big(P^{\Delta}_{t_i}f-\mu^{\Delta}(f)\big)\Big)(u)\nn\\
=&~\big(f(u)-\mu^{\Delta}(f)\big)\sum_{i=0}^{\infty}\big(P^{\Delta}_{t_{i+1}}f(u)-\mu^{\Delta}(f)\big)\nn\\
=&~\big(f(u)-\mu^{\Delta}(f)\big)\sum_{i=0}^{\infty}\big(P^{\Delta}_{t_{i}}f(u)-\mu^{\Delta}(f)\big)-|f(u)-\mu^{\Delta}(f)|^2.
\end{align}
Similarly,
\begin{align}\label{l6.1+2}
&\sum_{i=0}^{\infty}\big(P^{\Delta}_{t_i}f(u)-\mu^{\Delta}(f)\big)P^{\Delta}_{t_1}\Big(\sum_{i=0}^{\infty}\big(P^{\Delta}_{t_i}f-\mu^{\Delta}(f)\big)\Big)(u)\nn\\
=&~\big|\sum_{i=0}^{\infty}\big(P^{\Delta}_{t_i}f(u)-\mu^{\Delta}(f)\big)\big|^2-\big(f(u)-\mu^{\Delta}(f)\big)\sum_{i=0}^{\infty}\big(P^{\Delta}_{t_i}f(u)-\mu^{\Delta}(f)\big).
\end{align}
Inserting \eqref{l6.1+1} and \eqref{l6.1+2} into \eqref{+l6.1.1} implies $G^{\Delta}=H^{\Delta}$, which finishes the proof of (\romannumeral2).

(\romannumeral3) For any $u\in E^h$, 
using \eqref{trmub},  \eqref{trmix1}, the conditions $\gamma\in[\gamma_2,1]$  and $p\tilde q+2(1+\kappa)\gamma\leq q$,  and Assumption \ref{a2},  we deduce
\begin{align*}
&|H^{\Delta}(u)|\nn\\
\leq &\;K\|f\|^2_{p,\gamma}\tau^2(1+\|u\|^{p})+K\|f\|^2_{p,\gamma}\big(1+\E[\|Y^{u,\Delta}_{t_1}\|^{p\tilde q+2(1+\kappa)\gamma}]\big)\Big(\tau\sum_{i=0}^{\infty}\big(\rho^{\Delta}(t_{i})\big)^{\gamma}\Big)^2\nn\\
&+K\|f\|^2_{p,\gamma}\big(1+\|u\|^{p\tilde q+2(1+\kappa)\gamma}\big)\Big(\tau\sum_{i=0}^{\infty}\big(\rho^{\Delta}(t_{i})\big)^{\gamma}\Big)^2\nn\\
&+K\|f\|^2_{p,\gamma}\tau(1+\|u\|^{\frac{p}{2}}) \big(1+\|u\|^{\frac{p}{2}(1+\tilde q)+(1+\kappa)\gamma}\big)\Big(\tau\sum_{i=0}^{\infty}\big(\rho^{\Delta}(t_{i})\big)^{\gamma}\Big)\nn\\
\leq&~K\|f\|^2_{p,\gamma}(1+\|u\|^{\tilde q(p\tilde q+2(1+\kappa)\gamma)})\leq K\|f\|^2_{p,\gamma}(1+\|u\|^{\tilde p_{\gamma}}),
\end{align*}
which implies
\begin{align}\label{l6.1.2}
\sup_{h\in[0,\tilde h], \tau\in(0,\tilde \tau]}\sup_{u\in E^h}\frac{|H^{\Delta}(u)|}{1+\|u\|^{\tilde p_{\gamma}}}\leq K\|f\|_{p,\gamma}^{2}.
\end{align}
By the definition of $H^{\Delta},$ we deduce that for any $u_1,u_2\in E^h$,
\begin{align}\label{l6.1.11}
H^{\Delta}(u_1)-H^{\Delta}(u_2)=\sum_{l=1}^{4}I_{l}(u_1,u_2),
\end{align} 
where
\begin{align*}
I_1(u_1,u_2):=&~\tau^2\big(|f(u_2)-\mu^{\Delta}(f)|^2-|f(u_1)-\mu^{\Delta}(f)|^2\big),\nn\\
I_2(u_1,u_2):=&~\tau^2 \E\Big[\Big|\sum_{i=0}^{\infty}\big(P^{\Delta}_{t_i}f(Y^{u_1,\Delta}_{t_1})-\mu^{\Delta}(f)\big)\Big|^2\Big]\nn\\
&-\tau^2 \E\Big[\Big|\sum_{i=0}^{\infty}\big(P^{\Delta}_{t_i}f(Y^{u_2,\Delta}_{t_1})-\mu^{\Delta}(f)\big)\Big|^2\Big],\nn\\
I_3(u_1,u_2):=&~-\tau^2 \Big|\sum_{i=0}^{\infty}\big(P^{\Delta}_{t_i}f(u_1)-\mu^{\Delta}(f)\big)\Big|^2+\tau^2 \Big|\sum_{i=0}^{\infty}\big(P^{\Delta}_{t_i}f(u_2)-\mu^{\Delta}(f)\big)\Big|^2,\nn\\
I_4(u_1,u_2):=&~2\tau^2\big(f(u_1)-\mu^{\Delta}(f)\big) \sum_{i=0}^{\infty}\big(P^{\Delta}_{t_i}f(u_1)-\mu^{\Delta}(f)\big)\nn\\
&-2\tau^2\big(f(u_2)-\mu^{\Delta}(f)\big) \sum_{i=0}^{\infty}\big(P^{\Delta}_{t_i}f(u_2)-\mu^{\Delta}(f)\big).
\end{align*}
By the definition of $\mathcal C_{p,\gamma}$ and \eqref{trmub}, we have
\begin{align}\label{l6.1.6}
|I_1(u_1,u_2)|
=&~\tau^2|f(u_1)+f(u_2)-2\mu^{\Delta}(f)||f(u_1)-f(u_2)|\nn\\
\leq&~K\|f\|^2_{p,\gamma}\tau^2(1+\|u_1\|^{\frac{p}{2}}+\|u_2\|^{\frac{p}{2}})(1\wedge\|u_1-u_2\|^{\gamma})(1+\|u_1\|^{p}+\|u_2\|^{p})^{\frac{1}{2}}\nn\\
\leq&~ K\|f\|^2_{p,\gamma}\tau^2(1\wedge\|u_1-u_2\|^{\gamma})(1+\|u_1\|^{2p}+\|u_2\|^{2p})^{\frac{1}{2}}.
\end{align}
It follows from \eqref{trmix1} and \eqref{Yattr1} that
\begin{align}\label{l6.1.4}
|I_3(u_1,u_2)|
=&~\tau^2\big|\sum_{i=0}^{\infty}\big(P^{\Delta}_{t_i}f(u_1)-\mu^{\Delta}(f)\big)\nn\\
&+\sum_{i=0}^{\infty}\big(P^{\Delta}_{t_i}f(u_2)-\mu^{\Delta}(f)\big)\big|\big|\sum_{i=0}^{\infty}\big(P^{\Delta}_{t_i}f(u_2)-P^{\Delta}_{t_i}f(u_1)\big)\big|\nn\\
\leq&~K\|f\|^2_{p,\gamma}(1+\|u_1\|^{\frac{p\tilde q}{2}+(1+\kappa)\gamma}+\|u_2\|^{\frac{p\tilde q}{2}+(1+\kappa)\gamma})(\tau\sum_{i=0}^{\infty}(\rho^{\Delta}(t_i))^{\gamma})^2\nn\\
&\times\|u_1-u_2\|^{\gamma}(1+\|u_1\|^{\frac{p\tilde q}{2}+\gamma\kappa}+\|u_2\|^{\frac{p\tilde q}{2}+\gamma\kappa})\nn\\
\leq&~K\|f\|^2_{p,\gamma}\|u_1-u_2\|^{\gamma}(1+\|u_1\|^{p\tilde q+(1+2\kappa)\gamma}+\|u_2\|^{p\tilde q+(1+2\kappa)\gamma}).
\end{align}
Noting
\begin{align*}
I_2(u_1,u_2)=-\E\big[I_3(Y^{u_1,\Delta}_{t_1},Y^{u_2,\Delta}_{t_1})\big],
\end{align*}
by \eqref{l6.1.4}, $2p\tilde q+2(1+2\kappa)\gamma\leq q$, the  H\"older inequality, and Assumption \ref{a2}, we obtain
\begin{align}\label{l6.1.10}
&|I_2(u_1,u_2)|\nn\\
\leq{}&K\|f\|^2_{p,\gamma}\E\Big[\|Y^{u_1,\Delta}_{t_1}-Y^{u_2,\Delta}_{t_1}\|^{\gamma}(1+\|Y^{u_1,\Delta}_{t_1}\|^{p\tilde q+(1+2\kappa)\gamma}+\|Y^{u_2,\Delta}_{t_1}\|^{p\tilde q+(1+2\kappa)\gamma})\Big]\nn\\
\leq{}&K\|f\|^2_{p,\gamma}\big(\E[\|Y^{u_1,\Delta}_{t_1}-Y^{u_2,\Delta}_{t_1}\|^2]\big)^{\frac{\gamma}{2}}
\nn\\
&\times \Big(\E\big[1+\|Y^{u_1,\Delta}_{t_1}\|^{2p\tilde q+2(1+2\kappa)\gamma}+\|Y^{u_2,\Delta}_{t_1}\|^{2p\tilde q+2(1+2\kappa)\gamma}\big]\Big)^{\frac{1}{2}}\nn\\
\leq{}&K\|f\|^2_{p,\gamma}\|u_1-u_2\|^{\gamma}\big(1+\|u_1\|^{\tilde q(p\tilde q+(1+2\kappa)\gamma)+\gamma\kappa}+\|u_2\|^{\tilde q(p\tilde q+(1+2\kappa)\gamma)+\gamma\kappa}\big).
\end{align}
The similar arguments as those of $I_1$ and $I_3$ lead to
\begin{align}\label{l6.1.9}
|I_4(u_1,u_2)|\leq &~2\tau^2|f(u_1)-f(u_2)||\sum_{i=0}^{\infty}\big(P^{\Delta}_{t_i}f(u_1)-\mu^{\Delta}(f)\big)|\nn\\
&+2\tau^2|f(u_2)-\mu^{\Delta}(f)||\sum_{i=0}^{\infty}\big(P^{\Delta}_{t_i}f(u_1)-P^{\Delta}_{t_i}f(u_2)\big)|\nn\\
\leq&~K\|f\|^{2}_{p,\gamma}\tau\|u_1-u_2\|^{\gamma}(1+\|u_1\|^{\frac{p}{2}(1+\tilde q)+(1+\kappa)\gamma}+\|u_2\|^{\frac{p}{2}(1+\tilde q)+(1+\kappa)\gamma}).
\end{align}
According to $\tilde q\geq 1$ and $p\geq1,$ one has
$p\tilde q^2\geq \frac{p}{2}(1+\tilde q),$
which implies
$$\tilde p_{\gamma}:=\tilde q\big(p\tilde q+(2+3\kappa)\gamma\big)\geq\frac{p}{2}(1+\tilde q)+(1+\kappa)\gamma.$$
Plugging \eqref{l6.1.6}--\eqref{l6.1.9} into \eqref{l6.1.11} deduces
\begin{align*}
\sup_{h\in[0,\tilde h], \tau\in(0,\tilde \tau]}\sup_{\substack{u_1,u_2\in E^h\\u_1\neq u_2}}\frac{|H^{\Delta}(u_1)-H^{\Delta}(u_2)|}{\|u_1-u_2\|^{\gamma}(1+\|u_1\|^{2\tilde p_{\gamma}}+\|u_2\|^{2\tilde p_{\gamma}})^{\frac{1}{2}}}\leq K\|f\|^2_{p,\gamma}.
\end{align*}
This, along with \eqref{l6.1.2}  implies that
\begin{align*}
&\sup_{h\in[0,\tilde h], \tau\in(0,\tilde \tau]}\sup_{\substack{u_1,u_2\in E^h\\u_1\neq u_2}}\frac{|H^{\Delta}(u_1)-H^{\Delta}(u_2)|}{(1\wedge\|u_1-u_2\|^{\gamma})(1+\|u_1\|^{2\tilde p_{\gamma}}+\|u_2\|^{2\tilde p_{\gamma}})^{\frac{1}{2}}}\nn\\
\leq&~ K\|f\|^2_{p,\gamma}+ \sup_{h\in[0,\tilde h], \tau\in(0,\tilde \tau]}\sup_{\substack{u_1,u_2\in E^h\\u_1\neq u_2}}\frac{|H^{\Delta}(u_1)-H^{\Delta}(u_2)|\textbf 1_{\{\|u_1-u_2\|>1\}}}{(1+\|u_1\|^{2\tilde p_{\gamma}}+\|u_2\|^{2\tilde p_{\gamma}})^{\frac{1}{2}}}\nn\\
\leq&~K\|f\|^2_{p,\gamma}+ \sup_{h\in[0,\tilde h], \tau\in(0,\tilde \tau]}\sup_{\substack{u_1,u_2\in E^h\\u_1\neq u_2}}\frac{|H^{\Delta}(u_1)|+|H^{\Delta}(u_2)|}{(1+\|u_1\|^{2\tilde p_{\gamma}}+\|u_2\|^{2\tilde p_{\gamma}})^{\frac{1}{2}}}\nn\\
\leq &~K\|f\|^2_{p,\gamma}.
\end{align*}
The proof is completed.
\end{proof}

\begin{proof}[\textbf{Proof of Proposition \ref{p6.2}}.]
\textit{Step 1: Proof of \eqref{p6.2.1}}. 
The condition of $p$ in Proposition \ref{p6.2} implies that the one of Proposition \ref{p6.1} (ii) holds and then $H^{\Delta}$ is well-defined.
Moreover,
$H^{\Delta}$ can be rewritten as 
\begin{align*}
H^{\Delta}(u)=&-\tau^2|f(u)-\mu^{\Delta}(f)|^2+\tau^2 P^{\Delta}_{t_1}\Big|\sum_{i=0}^{\infty}\big(P^{\Delta}_{t_i}f-\mu^{\Delta}(f)\big)\Big|^2(u)-\tau^2 \Big|\sum_{i=0}^{\infty}\big(P^{\Delta}_{t_i}f(u)\nn\\
&-\mu^{\Delta}(f)\big)\Big|^2+2\tau^2\big(f(u)-\mu^{\Delta}(f)\big) \sum_{i=0}^{\infty}\big(P^{\Delta}_{t_i}f(u)-\mu^{\Delta}(f)\big),
\end{align*}
which together with $(P^{\Delta}_{t_1})^*\mu^{\Delta}=\mu^{\Delta}$ gives 
\begin{align*}
\mu^{\Delta}(H^{\Delta})
=-\tau^{2}\mu^{\Delta}\big(|f-\mu^{\Delta}(f)|^2\big)+2\tau\mu^{\Delta}\Big(\big(f-\mu^{\Delta}(f)\big)\sum_{i=0}^{\infty}\big(P^{\Delta}_{t_i}f-\mu^{\Delta}(f)\big)\tau\Big).
\end{align*}

\textit{Step 2: Proof of $|\mu^{\Delta}(H^{\Delta})|\leq K\tau\|f\|_{p,\gamma}^2$}. According to the Young inequality, we arrive at
\begin{align}\label{p6.2.8}
|\mu^{\Delta}(H^{\Delta})|\leq 2\tau\mu^{\Delta}\big(|f-\mu^{\Delta}(f)|^2\big)+\tau\mu^{\Delta}\Big(\big(\sum_{i=0}^{\infty}\big(P^{\Delta}_{t_i}f-\mu^{\Delta}(f)\big)\big)^2\tau^2\Big).
\end{align}
The fact $f\in\mathcal C_{p,\gamma}$ implies that $f^2\in\mathcal C_{2p,\gamma}$ and 
$\|f^2\|_{2p,\gamma}\leq K\|f\|_{p,\gamma}^2$.
Together with \eqref{trmub}, we derive
\begin{align}\label{p6.2.3}
&\quad \mu^{\Delta}(|f-\mu^{\Delta}(f)|^2)
=\mu^{\Delta}(f^2)-|\mu^{\Delta}(f)|^2\nn\\
&\leq K\|f\|_{p,\gamma}^2\mu^{\Delta}(1+\|\cdot\|^{p})+K\big(\|f\|_{p,\gamma}\mu^{\Delta}(1+\|\cdot\|^{\frac{p}{2}})\big)^2\leq K\|f\|_{p,\gamma}^2.
\end{align}
It follows from \eqref{trmix1}, $p\tilde q+2(1+\kappa)\gamma\leq q$, and \eqref{trmub} that
\begin{align}\label{p6.2.9}
&\mu^{\Delta}\Big((\sum_{i=0}^{\infty}\big(P^{\Delta}_{t_i}f-\mu^{\Delta}(f)\big)\big)^2\tau^2\Big)\nn\\
\leq&~\mu^{\Delta}\Big(
K\|f\|^2_{p,\gamma}(1+\|\cdot\|^{p\tilde q+2(1+\kappa)\gamma})\big(\sum_{i=0}^{\infty}\big(\rho^{\Delta}(t_i)\big)^{\gamma}\tau\big)^{2}\Big)\nn\\
\leq&~K\|f\|^2_{p,\gamma}\mu^{\Delta}
(1+\|\cdot\|^{p\tilde q+2(1+\kappa)\gamma})\leq K\|f\|^2_{p,\gamma}.
\end{align}
Inserting \eqref{p6.2.3} and \eqref{p6.2.9} into \eqref{p6.2.8}
yields
$|\mu^{\Delta}(H^{\Delta})|\leq K\tau\|f\|^2_{p,\gamma}.$

\textit{Step 3: Proof of 
$
\lim_{|\Delta|\rightarrow 0}\frac{\mu^{\Delta}(H^{\Delta})}{\tau}=v^2.
$}
Without loss of generality, below we  assume that $\mu(f)=0$. Otherwise, we let $\tilde f:=f-\mu(f)$ and consider $\tilde f$ instead of $f$.
The condition on $p$ in Proposition \ref{p6.2} leads to
$\frac{p}{2}(1+\tilde q\vee\tilde r)+(1+\beta\vee\kappa)\gamma\leq r$, which along with $f\in\mathcal C_{p,\gamma}$, \eqref{trmix1}, and \eqref{Xmix} implies that
$$\mu\Big(f\sum_{i=0}^{\infty}\big(P^{\Delta}_{t_i}f-\mu^{\Delta}(f)\big)\tau\textbf 1_{\{E^h\}}\Big)<\infty\quad\text{and}\quad \mu\Big(f\int_{0}^{\infty}P^{}_{t}f\mathrm dt\Big)<\infty.$$
Together with \eqref{p6.2.1} we arrive at
\begin{align*}
&\Big|\frac{\mu^{\Delta}(H^{\Delta})}{\tau}-v^2\Big|\nn\\
\leq&~\tau\mu^{\Delta}\big(|f-\mu^{\Delta}(f)|^2\big)+2|\mu^{\Delta}(f)|\mu^{\Delta}\Big(\sum_{i=0}^{\infty}\big|P^{\Delta}_{t_i}f-\mu^{\Delta}(f)\big|\tau\Big)\nn\\
&+2\Big|\mu^{\Delta}\Big(f\sum_{i=0}^{\infty}\big(P^{\Delta}_{t_i}f-\mu^{\Delta}(f)\big)\tau\textbf 1_{\{E^h\}}\Big)-\mu\Big(f\sum_{i=0}^{\infty}\big(P^{\Delta}_{t_i}f-\mu^{\Delta}(f)\big)\tau\textbf 1_{\{E^h\}}\Big)\Big|\nn\\
&+2\Big|\mu\Big(f\sum_{i=0}^{\infty}\big(P^{\Delta}_{t_i}f-\mu^{\Delta}(f)\big)\tau\textbf 1_{\{E^h\}}\Big)-\mu\Big(f\int_{0}^{\infty}P^{}_{t}f\mathrm dt\Big)\Big|=:\sum_{i=1}^4II_i.
\end{align*}
Due to \eqref{p6.2.3}, we have $II_1\to0$ as $|\Delta|\to0.$
By \eqref{trmub}, \eqref{trmix1}, and \eqref{t3.1},  we arrive at
\begin{align}\label{p6.2.5}
II_2\leq&~K|\mu^{\Delta}(f)|\mu^{\Delta}\Big(\|f\|_{p,\gamma}(1+\|\cdot\|^{\frac{p\tilde q}{2}+(1+\kappa)\gamma})\sum_{i=0}^{\infty}\big(\rho^{\Delta}(t_i)\big)^{\gamma}\tau\Big)\nn\\
\leq&~K\|f\|_{p,\gamma}|\mu^{\Delta}(f)-\mu(f)|
\leq~K\|f\|_{p,\gamma}^2|\Delta^{\a}|^{\gamma},
\end{align}
where we used $\mu(f)=0$.
Denote
$$F^{\Delta}:=f\sum_{i=0}^{\infty}\big(P^{\Delta}_{t_i}f-\mu^{\Delta}(f)\big)\tau\textbf 1_{\{E^h\}}.$$
Using the similar techniques as the proof of Proposition \ref{p6.1}, we obtain 
$$F^{\Delta}\in \mathcal C_{p(1+\tilde q)+2(1+\kappa)\gamma,\gamma}~~~\mbox{and}~~~\sup_{h\in[0,\tilde h], \tau\in(0,\tilde \tau]}\|F^{\Delta}\|_{p(1+\tilde q)+2(1+\kappa)\gamma,\gamma}\leq K \|f\|_{p,\gamma}^2.$$
Then by $p(1+\tilde q)+2(1+\kappa)\gamma\leq r\wedge q$ and \eqref{t3.1}, we deduce 
\begin{align}\label{p6.2.6}
II_3\leq&~K\|F^{\Delta}\|_{p(1+\tilde q)+2(1+\kappa)\gamma,\gamma}|\Delta^{\a}|^{\gamma}
\leq K\|f\|_{p,\gamma}^2|\Delta^{\a}|^{\gamma}.
\end{align}
For the term $II_4,$ we have
\begin{align*}
II_4\leq&~2\mu\Big(\textbf 1_{\{E^h\}}\int_{0}^{\infty}\big|f\big(P^{\Delta}_{t}f-P_{t}f\big)+f\big(\mu(f)-\mu^{\Delta}(f)\big)\big|\mathrm dt\Big)\nn\\
&+2\mu\Big(f\int_{0}^{\infty}P^{}_{t}f\mathrm dt\textbf 1_{\{E\backslash E^h\}}\Big),
\end{align*}
where $E\backslash E^h$ represents the complementary set of $E^h$ and we used $\mu(f)=0$.
It follows from \eqref{5.4} and Assumptions \ref{a2}--\ref{a3} that for any $u\in E^h$, 
\begin{align}\label{add_eq2}
|P_{t}f(u)-P^{\Delta}_{t}f(u)|
\leq&\;\|f\|_{p,\gamma}\big(1+\E[\|X^{u}_{t}\|^{p}]+\E[\|Y^{u}_{t}\|^{p}]\big)^{\frac{1}{2}}\big(\mathbb W_{2}(\mu^{u}_{t},\mu^{u,\Delta}_{t})\big)^{\gamma}\nn\\
\leq&\;\|f\|_{p,\gamma}\big(1+\E[\|X^{u}_{t}\|^{p}]+\E[\|Y^{u}_{t}\|^{p}]\big)^{\frac{1}{2}}\Big(\E\big[\|X^{u}_{t}-Y^{u}_{t}\|^2\big]\Big)^{\frac{\gamma}{2}}\nn\\
\leq&\;K\|f\|_{p,\gamma}(1+\|u\|^{(\frac{p}{2}+\gamma)(\tilde r\vee\tilde q)})|\Delta^{\a}|^{\gamma}.
\end{align}
This, along with \eqref{t3.1} implies that
\begin{align*}
&\textbf 1_{\{u\in E^h\}}\big|f(x)\big(P^{\Delta}_{t}f(u)-P_{t}f(u)\big)+f(u)\big(\mu(f)-\mu^{\Delta}(f)\big)\big|\nn\\
\leq&~\textbf 1_{\{u\in E^h\}}|f(u)|\Big(\big|P^{\Delta}_{t}f(u)-P_{t}f(u)\big|+K\|f\|_{p,\gamma}|\Delta^{\a}|^{\gamma}\Big)\nn\\
\leq &~K\|f\|^2_{p,\gamma}(1+\|u\|^{(\frac{p}{2}+\gamma)(\tilde r\vee\tilde q)+\frac{p}{2}}\textbf 1_{\{u\in E^h\}})|\Delta^{\a}|^{\gamma}.
\end{align*}
Applying the Fubini theorem and  the Fatou lemma leads to
\begin{align*}
&\varlimsup_{|\Delta|\rightarrow 0}\mu\Big(\textbf 1_{\{E^h\}}\int_{0}^{\infty}\big|f\big(P^{\Delta}_{t}f-P_{t}f\big)-f\big(\mu^{\Delta}(f)-\mu(f)\big)\big|\mathrm dt\Big)\nn\\
\leq&\int_{0}^{\infty}\mu\Big(\varlimsup_{|\Delta|\rightarrow 0}\textbf 1_{\{E^h\}}\big|f\big(P^{\Delta}_{t}f-P_{t}f\big)+f\big(\mu(f)-\mu^{\Delta}(f)\big)\big|\Big)\mathrm dt=0,
\end{align*}
which together with $\lim_{|\Delta|\rightarrow0}\mu\big(f\int_{0}^{\infty}P^{}_{t}f\mathrm dt\textbf 1_{\{E\backslash E^h\}}\big)=0$
gives $II_4\to0$ as $|\Delta|\to0.$
Combining terms $II_i,i=1,2,3,4$ completes the proof.
\end{proof}

\begin{appendix}\label{sec5}
\section{Proof of Proposition \ref{Limit_exact}}\label{appen_exact}
Let Assumption \textup{\ref{a1}} hold. Similar to the proof of Proposition \ref{prop_IM}, it can be shown that the time-homogeneous Markov process $\{X^x_t\}_{t\ge 0}$ admits a unique invariant measure $\mu,$ whose proof is omitted. First, we give some properties of $\mu$.
  It is straightforward to see that
\begin{align}\label{mubound}
\mu\big(\|\cdot\|^r\big)
=&{}\lim_{t\rightarrow\infty}\int_{E} \|u\|^r\mathrm d\mu^{\textbf 0}_t(u)
\leq \sup_{t\geq0}\E [\|X^{\textbf 0}_t\|^r]\leq L_{1}.
\end{align}
Let  $1\leq p\leq r$ and $\gamma\in(0,1]$.
By \eqref{5.4}, \eqref{W}, \eqref{mubound}, and Assumption \textup{\ref{a1}} \textup{(i)}, we obtain that for any $x\in E$ and $g\in \mathcal C_{p,\gamma}$,
\begin{align}\label{Xmix}
|P_{t}g(x)-\mu(g)|
\leq&~ K\|g\|_{p,\gamma} (1+\E[\|X^{x}_{t}\|^{p}]+\mu(\|\cdot\|^p))^{\frac{1}{2}}\big(\mathbb W_{2}(\mu^{x}_t,\mu)\big)^{\gamma}\nn\\
\leq&~ K\|g\|_{p,\gamma} (1+\|x\|^{\frac{p\tilde r}{2}})\big(\mathbb W_{2}(\mu^{x}_t,\mu)\big)^{\gamma}\nn\\
\leq&~ K\|g\|_{p,\gamma}(1+\|x\|^{\frac{p\tilde r}{2}+(1+\beta)\gamma})\rho^{\gamma}(t),
\end{align}
which implies that $P_t$ has strong mixing for $g\in\mathcal C_{p,\gamma}$. 
Moreover, using the same technique as \eqref{Xmix},  we obtain that for any $x,y\in E$,
\begin{align}\label{Xattr}
|P_{t}g(x)-P_{t}g(y)|
\leq&{} K\|g\|_{p,\gamma}\|x-y\|^{\gamma}(1+\|x\|^{\frac{p\tilde r}{2}+\gamma\beta}+\|y\|^{\frac{p\tilde r}{2}+\gamma\beta})\rho^{\gamma}(t).
\end{align}

The following lemma plays an important role in investigating the strong LLN and the CLT of the Markov process, whose proof is based on \cite[Proposition 2.6]{Shirikyan} and the strong mixing \eqref{Xmix}.
\begin{lemma}\label{exactle}
Let Assumption \textup{\ref{a1}} hold, $\gamma\in[\gamma_1,1],$
 and $\bar\varrho\in \mathbb N,\,p
\ge 1$ satisfy 
$\bar\varrho\big(\frac{p}{2}(1+\tilde r)+(1+\beta)\gamma\big)\leq r.$
Then for any $f\in\mathcal C_{p,\gamma}$, 
$x\in E$, 
we have 
\begin{align*}
\E \Big[\Big|\frac{1}{T}\int_0^Tf(X^x_s)\mathrm ds-\mu(f)\Big|^{2\bar\varrho}\Big]\leq K\|f\|^{2\varrho}_{p,\gamma}(1+\|x\|^{\bar\varrho\tilde r(\frac{p}{2}(1+\tilde r)+(1+\beta)\gamma)})T^{-\bar\varrho} ~~~\forall~T>0.
\end{align*}
\end{lemma}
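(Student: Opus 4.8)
The plan is to reduce the moment estimate for the time-average $\frac1T\int_0^T f(X^x_s)\,\mathrm ds - \mu(f)$ to an application of the abstract moment bound of \cite[Proposition 2.6]{Shirikyan}, using the strong mixing property \eqref{Xmix} as the crucial input. The strategy mirrors that of Lemma \ref{l5.1} for the discrete case, transposed to the continuous-time setting. First I would normalize by assuming without loss of generality that $\mu(f)=0$, replacing $f$ by $\tilde f = f-\mu(f)$ if necessary; note that $\tilde f \in \mathcal C_{p,\gamma}$ with $\|\tilde f\|_{p,\gamma}\le K\|f\|_{p,\gamma}$, so no regularity is lost.

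The core mechanism is the exponential-type decay of correlations encoded in \eqref{Xmix}, namely $|P_t g(x)-\mu(g)|\le K\|g\|_{p,\gamma}(1+\|x\|^{\frac{p\tilde r}{2}+(1+\beta)\gamma})\rho^\gamma(t)$ together with the integrability $\int_0^\infty \rho^{\gamma_1}(t)\,\mathrm dt<\infty$ from Assumption \ref{a1}(ii) and the condition $\gamma\in[\gamma_1,1]$. The next step is to expand the $2\bar\varrho$-th moment as a multiple integral $\E\big[\prod_{j=1}^{2\bar\varrho} f(X^x_{s_j})\big]$ over the simplex $0\le s_1\le\cdots\le s_{2\bar\varrho}\le T$, and to iteratively condition on the innermost filtration $\mathcal F_{s_{2\bar\varrho-1}}$, using the Markov property to replace $\E[f(X^x_{s_{2\bar\varrho}})\mid \mathcal F_{s_{2\bar\varrho-1}}]$ by $P_{s_{2\bar\varrho}-s_{2\bar\varrho-1}}f(X^x_{s_{2\bar\varrho-1}})$, which by \eqref{Xmix} is controlled by $\rho^\gamma(s_{2\bar\varrho}-s_{2\bar\varrho-1})$ times a polynomial weight. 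Each successive conditioning peels off one factor and produces a decay factor in the consecutive time-gap; the moment bound $\sup_{t\ge 0}\E[\|X^x_t\|^r]\le L_1(1+\|x\|^{\tilde r r})$ from Assumption \ref{a1}(i), combined repeatedly with the Hölder inequality, absorbs the polynomial weights. This is precisely the abstract combinatorial estimate packaged in \cite[Proposition 2.6]{Shirikyan}, so in practice I would verify that its hypotheses hold here—the strong mixing bound \eqref{Xmix} and the moment bound \eqref{mubound}—and then invoke it directly rather than reproducing the simplex bookkeeping.

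The condition on the admissible growth exponent, $\bar\varrho\big(\frac{p}{2}(1+\tilde r)+(1+\beta)\gamma\big)\le r$, is exactly what is needed so that the polynomial weight $\|x\|^{\frac{p\tilde r}{2}+(1+\beta)\gamma}$ appearing each time we apply \eqref{Xmix}, raised to the relevant power and controlled via Hölder against the $r$-th moment bound, stays integrable; tracking this exponent carefully yields the stated power $\bar\varrho\tilde r\big(\frac{p}{2}(1+\tilde r)+(1+\beta)\gamma\big)$ on $\|x\|$ in the final bound, and the decay integral $\int_0^\infty\rho^\gamma(t)\,\mathrm dt<\infty$ yields the $T^{-\bar\varrho}$ factor after dividing by $T^{2\bar\varrho}$.

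The main obstacle I anticipate is the careful bookkeeping of the polynomial weights under the iterated conditioning: each application of the strong mixing bound introduces a factor $(1+\|X^x_{s_j}\|^{\frac{p\tilde r}{2}+(1+\beta)\gamma})$ whose expectation must be controlled uniformly in time, and ensuring that the product of $2\bar\varrho$ such weights remains integrable against the available $r$-th moment bound is exactly where the hypothesis on $\bar\varrho$ and $p$ is consumed. Since this is the same difficulty already resolved abstractly in \cite{Shirikyan}, the cleanest route is to phrase the argument as a verification of that proposition's assumptions, so that the weight-tracking is delegated, and then simply read off the resulting exponents and the $T^{-\bar\varrho}$ rate.
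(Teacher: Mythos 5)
Your proposal matches the paper's own proof: the paper does not spell out the argument but states exactly that the lemma ``is based on \cite[Proposition 2.6]{Shirikyan} and the strong mixing \eqref{Xmix}'', which is precisely your plan of verifying the hypotheses of that abstract proposition (mixing bound \eqref{Xmix} plus the moment bound from Assumption \ref{a1}(i)) and invoking it to obtain the $T^{-\bar\varrho}$ rate and the stated polynomial weight in $\|x\|$. Your additional sketch of the simplex expansion and iterated conditioning correctly describes the mechanism delegated to Shirikyan's result, so the approaches coincide.
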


Next, with this lemma in hand, we give the proof of the strong LLN and the CLT in Proposition \ref{Limit_exact}.
\begin{proof}
\textbf{Proof of the strong LLN.}
For any $T\geq 1$, one has
\begin{align}\label{10072}
\Big|\frac1T\int_0^Tf(X^x_s)\mathrm ds-\mu(f)\Big|&\leq\Big|\frac1T\int_{\lfloor T\rfloor}^Tf(X^x_s)\mathrm ds\Big|
 +\Big|\frac{1}{\lfloor T\rfloor}\int_0^{\lfloor T\rfloor}f(X^x_s)\mathrm ds-\mu(f)\Big|\nn\\
 &\quad +\Big|\Big(\frac 1T-\frac{1}{\lfloor T\rfloor}\Big)\int_0^{\lfloor T\rfloor}f(X^x_s)\mathrm ds\Big|.
\end{align}
We first estimate the term $\big|\frac1T\int_{\lfloor T\rfloor}^Tf(X^x_s)\mathrm ds\big|$.
For any $k\in\mathbb N_+$, denote $$\mathscr D_k^{x}:=\Big\{\omega\in\Omega:\sup_{T\in[k,k+1)}\Big|\int_k^Tf(X^x_s)\mathrm ds\Big|>\|f\|_{p,\gamma}k^{\frac{3}{4}}\Big\}.$$
It follows from the Chebyshev inequality, the H\"older inequality, and Assumption \ref{a1} (i) that
\begin{align*}
\sum_{k=1}^{\infty}\mathbb P(\mathscr D^{x}_k)
&\leq \sum_{k=1}^{\infty}\frac{\E\Big[\sup_{T\in[k, k+1)}\big|\int_{k}^{T}f(X^{x}_s)\mathrm ds\big|^2\Big]}{\|f\|_{p,\gamma}^2k^{\frac{3}{2}}}\nn\\
&\leq \sum_{k=1}^{\infty}\frac{\E\Big[\sup_{T\in[k, k+1)}\int_{k}^{T}|f(X^{x}_s)|^2\mathrm ds(T-k)\Big]}{\|f\|_{p,\gamma}^2k^{\frac{3}{2}}}\nn\\
&\leq K\sum_{k=1}^{\infty}  k^{-\frac{3}{2}} \int_{k}^{k+1}(1+\E[\|X^x_s\|^{p}])\mathrm ds\nn\\
&\leq K(1+\|x\|^{\tilde rp})\sum_{k=1}^{\infty}k^{-\frac{3}{2}}\leq K(1+\|x\|^{\tilde rp}).
\end{align*}
Define $\tilde N^x(\omega):=\inf\{j\in\mathbb N_+:\sup_{T\in[k,k+1)}|\int_k^Tf(X^x_s)\mathrm ds|\leq\|f\|_{p,\gamma}k^{\frac{3}{4}}\text{ for all }k\ge j\}$. Applying the Borel--Cantelli lemma leads to  that for a.s. $\omega\in\Omega,$ $\tilde N^x(\omega)<\infty$, and   when $k\ge \tilde N^x(\omega),$
\begin{align*}
\sup_{T\in[k,k+1)}\Big|\int_k^Tf(X^x_s)\mathrm ds\Big|\leq\|f\|_{p,\gamma}k^{\frac{3}{4}},
\end{align*}
which implies 
\begin{align}\label{10073}
\Big|\frac1 T\int_{\lfloor T\rfloor}^T f(X_{s}^x)\mathrm ds\Big|\overset{a.s.}\longrightarrow0\quad\text{ as }T\to\infty.
\end{align}
Next, we estimate the second  term $\Big|\frac{1}{\lfloor T\rfloor}\int_0^{\lfloor T\rfloor}f(X^x_s)\mathrm ds-\mu(f)\Big|$.
For any $N\in\mathbb N_+$ and $\delta\in(0, \frac14)$, denote $\mathcal A^x_N:=\{\omega\in\Omega:|\frac1N\int_0^Nf(X^x_s)\mathrm ds-\mu(f)|>\|f\|_{p,\gamma}N^{-\delta}\}.$ This, together with Lemma \ref{exactle} for $\bar \varrho=2$ gives 
\begin{align*}
\sum_{N=1}^{\infty}\mathbb P(\mathcal A^x_N)\leq K(1+\|x\|^{\tilde r(p(1+\tilde r)+2(1+\beta)\gamma)})\sum_{N=1}^{\infty}N^{4\delta-2}\leq K(1+\|x\|^{\tilde r(p(1+\tilde r)+2(1+\beta)\gamma)}).
\end{align*}
Define $\mathcal K^x(\omega):=\inf\{j\in\mathbb N_+:|\frac1N\int_0^Nf(X^x_s)\mathrm ds-\mu(f)|(\omega)\leq \|f\|_{p,\gamma}N^{-\delta}\text{ for all }N\ge j\}$. Applying the Borel--Cantelli lemma yields that for a.s. $\omega\in\Omega,$ when $N\ge \mathcal K^x(\omega),$
\begin{align}\label{10071}
\Big|\frac1N\int_0^Nf(X^N_s)\mathrm ds-\mu(f)\Big|\leq \|f\|_{p,\gamma}N^{-\delta},
\end{align}
which implies 
\begin{align}\label{10075}
\Big|\frac{1}{\lfloor T\rfloor}\int_0^{\lfloor T\rfloor}f(X^x_s)\mathrm ds-\mu(f)\Big|\overset{a.s.}\longrightarrow0\text{ as }T\to\infty.
\end{align}
For the third term $\Big|\Big(\frac 1T-\frac{1}{\lfloor T\rfloor}\Big)\int_0^{\lfloor T\rfloor}f(X^x_s)\mathrm ds\Big|$, by \eqref{10071} we deduce that 
\begin{align}\label{10074}
\Big|\Big(\frac 1T-\frac{1}{\lfloor T\rfloor}\Big)\int_0^{\lfloor T\rfloor}f(X^x_s)\mathrm ds\Big|
&\leq\frac{1}{T\lfloor T\rfloor}\Big|\int_0^{\lfloor T\rfloor}f(X^x_s)\mathrm ds\Big|\nn\\
&\leq \frac{1}{T}\big(|\mu(f)|+\|f\|_{p,\gamma}\lfloor T\rfloor^{-\delta}\big)
\overset{a.s.}\longrightarrow0\text{ as }T\to\infty.
\end{align}
Therefore, plugging \eqref{10073}, \eqref{10075}, and \eqref{10074} into \eqref{10072} we derive  the strong LLN of $\{X^x_t\}_{t\ge 0}$.

\textbf{Proof of the CLT.}
For any $T\geq 1$, note that
\begin{align*}
&\frac{1}{\sqrt T}\int_0^T\big(f(X^x_s)-\mu(f)\big)\mathrm ds\nn\\
=&~\frac{1}{\sqrt T}\int_{\lfloor T\rfloor}^T\big(f(X^x_s)-\mu(f)\big)\mathrm ds
 +\Big(\frac {1}{\sqrt{T}}-\frac{1}{\sqrt{\lfloor T\rfloor}}\Big)\int_0^{\lfloor T\rfloor}\big(f(X^x_s)-\mu(f)\big)\mathrm ds\nn\\
 &~+\frac{1}{\sqrt {\lfloor T\rfloor}}\int_0^{\lfloor T\rfloor}\big(f(X^x_s)-\mu(f)\big)\mathrm ds.
\end{align*}
It follows from \eqref{mubound} and Assumption \ref{a1} (i) that
\begin{align*}
\E\Big[\Big|\frac{1}{\sqrt T}\int_{\lfloor T\rfloor}^T\big(f(X^x_s)-\mu(f)\big)\mathrm ds\Big|\Big]
\leq &~\frac{1}{\sqrt T}\int_{\lfloor T\rfloor}^T K \|f\|_{p,\gamma}\big(1+\sup_{t\geq 0}\E[\|X^x_t\|^{\frac{p}{2}}]\big)\mathrm ds\nn\\
\leq&~ K(1+\|x\|^{\frac{\tilde rp}{2}})\frac{1}{\sqrt T}\rightarrow 0\quad\text{as }T\rightarrow \infty.
\end{align*}
Making use of  Lemma  \ref{exactle} and $(\sqrt T-\sqrt{\lfloor T\rfloor})^2\leq T-\lfloor T\rfloor\leq 1$, we deduce  
 \begin{align*}
 &\E\Big[\Big|\big(\frac {1}{\sqrt{T}}-\frac{1}{\sqrt{\lfloor T\rfloor}}\big)\int_0^{\lfloor T\rfloor}\big(f(X^x_s)-\mu(f)\big)\mathrm ds\Big|^2\Big]\nn\\
 \leq&\frac{\lfloor T\rfloor}{T}\E\Big[\Big|\frac{1}{\lfloor T\rfloor}\int_0^{\lfloor T\rfloor}\big(f(X^x_s)-\mu(f)\big)\mathrm ds\Big|^2\Big]\nn\\
 \leq &~K\|f\|^2_{p,\gamma} (1+\|x\|^{\tilde r(\frac{p}{2}(1+\tilde r)+(1+\beta)\gamma)})T^{-1}  \rightarrow 0\quad\text{as } T\rightarrow \infty.
 \end{align*}
Those imply that it suffices to prove the CLT for the case of  $T=N\in\mathbb N_+$.   We have the decomposition
\begin{align*}
\frac{1}{\sqrt {N}}\int_{0}^{N}\big(f(X^{x}_{t})-\mu(f)\big)\mathrm dt =\frac{1}{\sqrt N}\mathscr M^{x}_{N}+\frac{1}{\sqrt {N}}\mathscr R^{x}_{N},
\end{align*}
where 
\begin{align*}
\mathscr M^{x}_{N}:=&\int_{0}^{\infty}\Big(\E\big[f(X_{t}^{x})|\mathcal F_{N}\big]-\mu(f)\Big)\mathrm dt-\int_{0}^{\infty}\Big(\E\big[f(X_{t}^{x})|\mathcal F_{0}\big]-\mu(f)\Big)\mathrm dt,\nn\\
\mathscr R^{x}_{N}:=&-\int_{N}^{\infty}\Big(\E\big[f(X_{t}^{x})|\mathcal F_{N}\big]-\mu(f)\Big)\mathrm dt+\int_{0}^{\infty}\Big(\E\big[f(X_{t}^{x})|\mathcal F_{0}\big]-\mu(f)\Big)\mathrm dt.
\end{align*}
By a similar proof to Proposition \ref{Prop4.1}, it can be shown that  $\{\mathscr M^{x}_{N}\}_{N\in\mathbb N}$ is an $\{\mathcal F_{N}\}_{N\in\mathbb N}$-adapted  martingale with $\mathscr M^x_0=0.$ Define the martingale difference sequence of $\{\mathscr M^{x}_{N}\}_{N\in\mathbb N}$ by
\begin{align*}
\mathcal Z^{x}_{N}:=\mathscr M^{x}_{N}-\mathscr M^{x}_{N-1}~~~\forall N\in\mathbb N_+,
~~~~\mathcal Z^{x}_{0}=0.
\end{align*}
Since
\begin{align}
\mathcal Z^{x}_{N+1}
&=\int_{N}^{N+1}\Big(f(X^{x}_t)-\mu(f)\Big)\mathrm dt+\int_{N+1}^{\infty}\Big(\E[f(X^{x}_t)|\mathcal F_{N+1}\big]-\mu(f)\Big)\mathrm dt\nn\\
&-\int_{N}^{\infty}\Big(\E[f(X^{x}_t)|\mathcal F_{N}\big]-\mu(f)\Big)\mathrm dt\nn\\
&=\int_{N}^{N+1}\Big(f(X^{x}_t)-\mu(f)\Big)\mathrm dt+\int_{0}^{\infty}\Big(P_{t}f(X^{x}_{N+1})-\mu(f)\Big)\mathrm dt\nn\\
&-\int_{0}^{\infty}\Big(P_{t}f(X^{x}_N)-\mu(f)\Big)\mathrm dt,
\end{align}
we deduce that $\sup_{N\in\mathbb N}\mathbb E[|\mathcal Z^x_{N}|^{c}]\leq K$ when $c(\frac{p\tilde r}{2}+(1+\beta)\gamma)\leq r$, whose proof is similar to Proposition \ref{p6.1} (\romannumeral1). Moreover,  
\begin{align}
\E\big[|\mathcal Z^{x}_{N+1}|^2|\mathcal F_{N}\big]
={}&\E\Big[\big|\int_{0}^{1}\big(f(X^{y}_t)-\mu(f)\big)\mathrm dt+\int_{0}^{\infty}\big(P_{t}f(X_{1}^{y})-\mu(f)\big)\mathrm dt\nn\\
&-\int_{0}^{\infty}\big(P_{t}f(y)-\mu(f)\big)\mathrm dt\big|^2\Big]\Big|_{y=X^{x}_{N}}
=: H(X^{x}_{N}).
\end{align}
Without loss of generality, below we  assume that $\mu(f)=0$. Otherwise, we let $\tilde f:=f-\mu(f)$ and consider $\tilde f$ instead of $f$. 

\textit{Claim 1: $H\in\mathcal C_{2\bar p_{\gamma},\gamma}$ and $\|H\|_{2\tilde p_{\gamma},\gamma}\leq K\|f\|^2_{p,\gamma}$ with $\bar p_{\gamma}:=\tilde r(p\tilde r+(2+3\beta)\gamma)$.} 
It can be calculated that 
\begin{align}\label{eqH}
H(u)=&~\E\Big[\big|\int_{0}^{1}f(X^{u}_t)\mathrm dt\big|^2\Big]+\E\Big[\big|\int_{0}^{\infty}P_{t}f(X_{1}^{u})\mathrm dt\big|^2\Big]+\big|\int_{0}^{\infty}P_{t}f(u)\mathrm dt\big|^2\nn\\
&+2\E\Big[\int_{0}^{1}f(X^{u}_t)\mathrm dt \int_{0}^{\infty}P_{t}f(X_{1}^{u})\mathrm dt\Big]-2\E\Big[\int_{0}^{1}f(X^{u}_t)\mathrm dt\Big] \int_{0}^{\infty}P_{t}f(u)\mathrm dt\nn\\
&-2\E\Big[\int_{0}^{\infty}P_{t}f(X_{1}^{u})\mathrm dt\Big]\int_{0}^{\infty}P_{t}f(u)\mathrm dt.
\end{align}
By the property of the semigroup $P_t$, we have  
\begin{align}
&\E\Big[\int_{0}^{1}f(X^{u}_t)\mathrm dt\Big] \int_{0}^{\infty}P_{t}f(u)\mathrm dt
=\int_{0}^{1}P_{t}f(u)\mathrm dt \int_{0}^{\infty}P_{t}f(u)\mathrm dt\label{eq6.36}
\end{align}
and 
\begin{align}\label{eq6.37}
&\E\Big[\int_{0}^{\infty}P_{t}f(X_{1}^{u})\mathrm dt\Big]\int_{0}^{\infty}P_{t}f(u)\mathrm dt
=\int_{0}^{\infty}P_{t+1}f(u)\mathrm dt\int_{0}^{\infty}P_{t}f(u)\mathrm dt\nn\\
=&~\big(\int_{0}^{\infty}P_{t}f(u)\mathrm dt\big)^2-\int_{0}^{1}P_{t}f(u)\mathrm dt\int_{0}^{\infty}P_{t}f(u)\mathrm dt.
\end{align}
Combining  the property of the conditional expectation  gives 
\begin{align}
&\quad \E\Big[\big|\int_{0}^{1}f(X^{u}_t)\mathrm dt\big|^2\Big]
=2\E\Big[\int_{0}^{1}f(X^{u}_t)\int_{t}^{1}f(X^{u}_s)\mathrm ds\mathrm dt\Big]\nn\\
&=2\int_{0}^{1}\E\Big[f(X^{u}_t)\E\Big[\int_{t}^{1}f(X^{u}_s)\mathrm ds\Big|\mathcal F_{t}\Big]\Big]\mathrm dt\nn\\
&=2\int_{0}^{1}\int_{0}^{1-t}P_{t}\big(fP_{s} f\big)(u)\mathrm ds\mathrm dt\label{eq6.34}
\end{align}
and 
\begin{align}
&\E\Big[\int_{0}^{1}f(X^{u}_t)\mathrm dt \int_{0}^{\infty}P_{t}f(X_{1}^{u})\mathrm dt\Big]
=\;\int_{0}^{1}\E\Big[f(X^{u}_t)\E\big[\int_{0}^{\infty}P_{s}f(X_{1}^{u})\mathrm ds|\mathcal F_{t}\big]\Big]\mathrm dt\nn\\
=&~\int_{0}^{1}\E\Big[f(X^{u}_t)\int_{0}^{\infty}P_{1-t+s}f(X_{t}^{u})\mathrm ds\Big]\mathrm dt\nn\\
=&~\int_{0}^{1}\E\Big[f(X^{u}_t)\int_{0}^{\infty}P_{s}f(X_{t}^{u})\mathrm ds\Big]\mathrm dt-\int_{0}^{1}\E\Big[f(X^{u}_t)\int_{0}^{1-t}P_{s}f(X_{t}^{u})\mathrm ds\Big]\mathrm dt\nn\\
=&~\int_{0}^{1}\E\Big[f(X^{u}_t)\int_{0}^{\infty}P_{s}f(X_{t}^{u})\mathrm ds\Big]\mathrm dt-\int_{0}^{1}\int_{0}^{1-t}P_{t}\big(fP_{s}f\big)(u)\mathrm ds\mathrm dt.\label{eq6.35}
\end{align}
Plugging \eqref{eq6.36}--\eqref{eq6.35} into  \eqref{eqH} yields 
\begin{align*}
H(u)=&~
\E\Big[\big|\int_{0}^{\infty}P_{t}f(X_{1}^{u})\mathrm dt\big|^2\Big]-\big|\int_{0}^{\infty}P_{t}f(u)\mathrm dt\big|^2+2\E\Big[\int_{0}^{1}f(X^{u}_t) \int_{0}^{\infty}P_{s}f(X_{t}^{u})\mathrm ds\mathrm dt\Big].
\end{align*}
We derive from $\tilde r\geq 1$, $p\tilde r+2(1+\beta)\gamma\leq r$, Assumption \ref{a1} (i), and \eqref{Xmix} that 
\begin{align}
|H(u)|&\leq K\E\Big[\Big|\int_{0}^{\infty}\|f\|_{p,\gamma}(1+\|X^{u}_1\|^{\frac{p\tilde r}{2}+(1+\beta)\gamma})(\rho(t))^{\gamma}\mathrm dt\Big|^2\Big]\nn\\
&+K\Big|\int_{0}^{\infty}\|f\|_{p,\gamma}(1+\|u\|^{\frac{p\tilde r}{2}+(1+\beta)\gamma})(\rho(t))^{\gamma}\mathrm dt\Big|^2\nn\\
&+K\E\Big[\Big|\int_{0}^{1}\int_{0}^{\infty}\|f\|^2_{p,\gamma}(1+\|X^{u}_t\|^{\frac{p}{2}(1+\tilde r)+(1+\beta)\gamma})(\rho(s))^{\gamma}\mathrm ds\mathrm dt\Big]\nn\\
&\leq K\|f\|^2_{p,\gamma}(1+\|u\|^{\tilde r(p\tilde r+2(1+\beta)\gamma)}),
\end{align}
and hence 
\begin{align*}
\sup_{u\in E}\frac{|H(u)|}{1+\|u\|^{\bar p_{\gamma}}}\leq K\|f\|^2_{p,\gamma}.
\end{align*}
For any $u_1,u_2\in E$,
\begin{align*}
H(u_1)-H(u_2)=\sum_{i=1}^{3}\tilde I_{i}(u_1,u_2),
\end{align*}
where
\begin{align*}
 \tilde I_{1}(u_1,u_2)&=\E\Big[\big|\int_{0}^{\infty}P_{t}f(X_{1}^{u_1})\mathrm dt\big|^2\Big]-\E\Big[\big|\int_{0}^{\infty}P_{t}f(X_{1}^{u_2})\mathrm dt\big|^2\Big],\nn\\
 \tilde I_{2}(u_1,u_2)&=-\big|\int_{0}^{\infty}P_{t}f(u_1)\mathrm dt\big|^2+\big|\int_{0}^{\infty}P_{t}f(u_2)\mathrm dt\big|^2,\nn\\
  \tilde I_{3}(u_1,u_2)&=2\E\Big[\int_{0}^{1}f(X^{u_1}_t) \int_{0}^{\infty}P_{s}f(X_{t}^{u_1})\mathrm ds\mathrm dt\Big]-2\E\Big[\int_{0}^{1}f(X^{u_2}_t) \int_{0}^{\infty}P_{s}f(X_{t}^{u_2})\mathrm ds\mathrm dt\Big].
\end{align*}
Similarly to the proof of \eqref{l6.1.4},  using \eqref{Xmix}--\eqref{Xattr} and the  H\"older inequality leads to 
\begin{align*}
\tilde I_2(u_1,u_2)\leq K\|f\|^2_{p,\gamma}\|u_1-u_2\|^{\gamma}(1+\|u_1\|^{p\tilde r+(1+2\beta)\gamma}+\|u_2\|^{p\tilde r+(1+2\beta)\gamma}). 
\end{align*}
Similarly to the proof of \eqref{l6.1.10}, by the relation $\tilde I_1(u_1,u_2)=-\mathbb E[\tilde I_2(X^{u_1}_1,X^{u_2}_1)]$, $2p\tilde r+2(1+2\beta)\gamma\leq r$, and Assumption \ref{a1}, we obtain 
\begin{align*}
\tilde I_1(u_1,u_2)&\leq K\|f\|^2_{p,\gamma}\mathbb E\Big[\|X^{u_1}_1-X^{u_2}_1\|^{\gamma}(1+\|X^{u_1}_1\|^{p\tilde r+(1+2\beta)\gamma}+\|X^{u_2}_1\|^{p\tilde r+(1+2\beta)\gamma})\Big]\\
& \leq K\|f\|^2_{p,\gamma}\|u_1-u_2\|^{\gamma}(1+\|u_1\|^{\tilde r(p\tilde r+(1+2\beta)\gamma)+\gamma\beta}+\|u_2\|^{\tilde r(p\tilde r+(1+2\beta)\gamma)+\gamma\beta}).
 \end{align*}
Similarly,  $\tilde I_3$ can be estimated as follows:
\begin{align*}
\tilde I_3(u_1,u_2)&\leq K\|f\|^2_{p,\gamma}\!\int_0^1\mathbb E\Big[\|X^{u_1}_t\!\!-\!\!X^{u_2}_t\|^{\gamma}(1+\|X^{u_1}_t\|^p+\|X^{u_2}_t\|^p)^{\frac12}(1+\|X^{u_1}_t\|^{\frac{p\tilde r}{2}+\gamma(1+\beta)})\\
&\quad +\|X^{u_1}_t-X^{u_2}_t\|^{\gamma}(1+\|X^{u_1}_t\|^{\frac{p\tilde r}{2}+\gamma\beta}+\|X^{u_2}_t\|^{\frac{p\tilde r}{2}+\gamma\beta})(1+\|X^{u_2}_t\|^{\frac p2})\Big]\mathrm dt\\
&\leq K\|f\|^2_{p,\gamma}\|u_1-u_2\|^{\gamma}(1+\|u_1\|^{\frac {\tilde rp}{2}(1+\tilde r)+\tilde r(1+\beta)\gamma+\beta\gamma}+\|u_2\|^{\frac {\tilde rp}{2}(1+\tilde r)+\tilde r(1+\beta)\gamma+\beta\gamma}),
\end{align*}
where we used \eqref{Xmix}--\eqref{Xattr},  the H\"older inequality, $p(1+\tilde r)+2(1+\beta)\gamma\leq r$, and Assumption \ref{a1}. This, combining the definition of $\bar p_{\gamma}$ gives that
$H\in\mathcal C_{2\bar p_{\gamma},\gamma}$ and $\|H\|_{2\bar p_{\gamma},\gamma}\leq K\|f\|^2_{p,\gamma},$ which finishes the proof of \textit{Claim 1}.

\textit{Claim 2: $\mu(H)=v^2.$} In fact, $H$ can be rewritten as
\begin{align*}
H=P_1\Big|\int_0^{\infty}P_tf\mathrm dt\Big|^2-\Big|\int_0^{\infty}P_tf\mathrm dt\Big|^2+2\int_0^1P_t\Big(\int_0^{\infty}fP_sf\mathrm ds\Big)\mathrm dt,
\end{align*}
which together with the fact that $P_t^*\mu=\mu$ for $t>0$ finishes the proof of \textit{Claim 2}.

\textit{Claim 3: $\frac{1}{\sqrt{N}}\mathscr R^x_N\overset{\mathbb P}\longrightarrow 0$ as $N\to\infty$.} 
Combining Assumption \ref{a1} and  \eqref{Xmix} leads to
\begin{align*}
\mathbb E\Big[\Big|\frac{1}{\sqrt N}\mathscr R^x_N\Big|\Big]\leq\frac{1}{\sqrt N}\mathbb E\Big[\Big|\int_0^{\infty}P_tf(X^x_N)\mathrm dt\Big|+\Big|\int_0^{\infty}P_tf(x)\mathrm dt\Big|\Big]\to0\quad\text{ as }N\to\infty.
\end{align*}
 
\textit{Claim 4: $\frac{1}{\sqrt{N}}\mathscr M^x_N\overset{d}\longrightarrow\mathscr N(0,v^2)$ as $N\to\infty$.}
The proof is similar to that of  Theorem \ref{CLTle}. The aim is to show that
\begin{align*}
\lim_{N\to \infty}\E \big[\exp{\big({\bf i}\iota N^{-\frac12}\mathscr M^{x}_{N}\big)}\big]=e^{-\frac{v^2\iota^2}{2}}\quad\forall\;\iota \in\RR. 
\end{align*}
Similar to \eqref{th6.1.3}, we have
\begin{align*}
e^{\frac{v^2\iota^2}{2}}\E \big[\exp{\big({\bf i}\iota N^{-\frac12}\mathscr M^{x}_{N}\big)}\big]-1=\tilde I_1(N)+\tilde I_2(N)+\tilde I_3(N),
\end{align*}
where 
\begin{align*}
\tilde I_1(N):=&{}\sum_{j=0}^{N-1}e^{\frac{v^2\iota^2(j+1)}{2N}}(1-e^{-\frac{v^2\iota^2}{2N}}-\frac{v^2\iota^2}{2N})\E[\exp{({\bf i} \iota N^{-\frac12}\mathscr {M}^{x}_{j})}],\nn\\
\tilde I_2(N):=&{}-\iota^2N^{-1}\sum_{j=0}^{N-1}e^{\frac{v^2\iota^2(j+1)}{2N}}\E\Big[\exp{({\bf i} \iota N^{-\frac12}\mathscr {M}^{x}_{j})}(\mathcal Z^{x}_{j+1})^2Q(\iota N^{-\frac12}\mathcal Z^x_{j+1})\Big],\nn\\
\tilde I_3(N):=&{}\frac{\iota^2}{2N}\sum_{j=0}^{N-1}e^{\frac{v^2\iota^2(j+1)}{2N}}\E\Big[\exp{({\bf i} \iota N^{-\frac12}\mathscr {M}^{x}_{j})}\big(v^2-(\mathcal Z^{x}_{j+1})^2\big)\Big].
\end{align*}
A similar proof to \eqref{th6.1.4} gives $|\tilde I_1(N)|\to0$ as $N\to\infty.$

For the estimate of $\tilde I_2(N),$ we arrive at that for  $\epsilon\ll1$ and $c_1>2,$
\begin{align*}
|\tilde I_2(N)|\leq& \iota ^2e^{\frac{v^2\iota ^2}{2}}N^{-1}\Big[\sup_{|\zeta|\leq \iota \epsilon}|Q(\zeta)|\sum_{j=0}^{N-1}\mathbb E[(\mathcal Z^x_{j+1})^2]\nn\\
&+\sum_{j=0}^{N-1}(\mathbb E\big[(\mathcal Z^x_{j+1})^{c_1}\big])^{\frac{2}{c_1}}\big(\mathbb P(|\mathcal Z^x_{j+1}|>\epsilon\sqrt N)\big)^{\frac{c_1-2}{c_1}}\Big].
\end{align*}
 Since the condition on $p$ in Proposition \ref{Limit_exact} implies that there exists $c_1>2$ such that $c_1(\frac{p\tilde r}{2}+(1+\beta)\gamma)\leq r,$ 
 we have $\sup_{j\in\mathbb N}\mathbb E[(\mathcal Z^x_{j})^{c_1}]\leq K$.
 Thus $|\tilde I_2(N)|$ tends to $0$ as $N\to\infty$ and $\epsilon\rightarrow 0$.

For the estimate of $\tilde I_3(N),$  similar to that of $I_3(k)$ in Theorem \ref{CLTle}, we define $\widetilde N:=N^{1-\epsilon_0}$ with some $\epsilon_0\ll1$ and $\widetilde M:=\lceil N/\widetilde N\rceil$, 
and denote 
\begin{align*}
\widetilde {\mathbb B}_{i}=&~\{(i-1)\widetilde N,(i-1)\widetilde N +1,\ldots, i\widetilde N-1\}~~~~\mbox{for all} ~i\in\{1,\ldots, \widetilde M-1\},\nn\\
\widetilde{\mathbb B}_{\widetilde M}=&~\{(\widetilde M-1)\widetilde N,\ldots, N-1\}.
\end{align*}
Then     the term $\tilde I_3(N)$ can be estimated  as
\begin{align*}
\tilde I_3(k)&\leq \frac{\iota^2}{2N}\sum_{i=1}^{\widetilde M}\E\Big[\exp{({\bf i} \iota N^{-\frac12}\mathscr {M}^{x}_{(i-1)\widetilde N})}\sum_{j\in\widetilde{\mathbb B}_{i}}e^{\frac{v^2\iota^2(j+1)}{2N}}\big(v^2-(\mathcal Z^{x}_{j+1})^2\big)\Big],\nn\\
&\quad +\frac{\iota^2}{2N}e^{\frac{v^2\iota^2}{2}}\sum_{i=1}^{\widetilde M}\sum_{j\in\widetilde{\mathbb B}_{i}}\E\Big[\big|\exp{\big({\bf i} \iota N^{-\frac12}(\mathscr {M}^{x}_{j}-\mathscr {M}^{x}_{(i-1)\widetilde N})\big)}-1\big|\big(v^2+(\mathcal Z^{x}_{j+1})^2\big)\Big]\nn\\
&=:\tilde I_{3,1}(N)+\tilde I_{3,2}(N).
\end{align*}
The condition of $p$ in Proposition \ref{exactle} implies $2\bar p_{\gamma}\leq r$. 
Then, using the property of the conditional expectation, \textit{Claims 1--2}, \eqref{Xmix}, and Assumption \ref{a1} yields that 
\begin{align*}
\tilde I_{3,1}(N)&\leq \frac{\iota^2}{2N}\sum_{i=1}^{\widetilde M}\E\Big[\Big|\E\Big[\sum_{j\in\widetilde{\mathbb B}_{i}}e^{\frac{v^2\iota^2(j+1)}{2N}}\big(v^2-(\mathcal Z^{x}_{j+1})^2\big)\Big|\mathcal F_{(i-1)\widetilde N}\Big]\Big|\Big]\\
&\leq \frac{\iota^2}{2N}e^{\frac{v^2\iota^2}{2}}\sum_{i=1}^{\widetilde M}\mathbb E\Big[\sum_{j\in\widetilde {\mathbb B}_1}\big|v^2-\mathbb E[(\mathcal Z^y_{j+1})^2]\big|\Big|_{y=X^x_{(i-1)\widetilde N}}\Big]\\
&\leq \frac{\iota^2}{2N}e^{\frac{v^2\iota^2}{2}}\sum_{i=1}^{\widetilde M}\mathbb E\Big[\sum_{j\in\widetilde {\mathbb B}_1}|\mu(H)-P_jH(y)|\Big|_{y=X^x_{(i-1)\widetilde N}}\Big]\to0\quad\text{ as }N\to\infty.
\end{align*}
 The term $\tilde I_{3,2}(N)$ can be estimated as
\begin{align*}
\tilde I_{3,2}&\leq \frac{\iota ^2}{2N}e^{\frac{v^2\iota^2}{2}}\sum_{i=1}^{\widetilde M}\sum_{j\in\widetilde {\mathbb B}_i}\Big[\iota\tilde\epsilon _0\mathbb E[v^2\!+\!(\mathcal Z^x_{j+1})^2]+2\mathbb E\Big[(v^2\!+\!(\mathcal Z^x_{j+1})^2)
\textbf 1_{\{N^{-\frac12}|\mathscr {M}^{x}_{j}-\mathscr {M}^{x}_{(i-1)\widetilde N}|>\tilde\epsilon_0\}}
\Big]\Big]\\
&\leq K\tilde\epsilon_0+K\frac{\iota^2}{N}e^{\frac{v^2\iota^2}{2}}\sum_{i=1}^{\widetilde M}\sum_{j\in\widetilde {\mathbb B}_i}(\mathbb E[v^{c_1}+(\mathcal Z^x_{j+1})^{c_1}])^{\frac{2}{c_1}}\big(\mathbb E[|\mathscr {M}^{x}_{j}-\mathscr {M}^{x}_{(i-1)\widetilde N}|^2](\tilde\epsilon_0 ^2N)^{-1}\big)^{\frac{c_1-2}{c_1}}\\
&\leq K\tilde \epsilon_0+K\big(\widetilde N/(\tilde \epsilon_0^2 N)\big)^{\frac{c_1-2}{c_1}}\to0\quad\text{ as }N\to\infty ~\mbox{and}~ \tilde \epsilon_0\rightarrow 0,
\end{align*}
where we used 
\begin{align*}
\E\big[|\mathscr {M}^{x}_{j}-\mathscr {M}^{x}_{(i-1)\widetilde N}|^2\big]
=\sum_{l=(i-1)\widetilde N}^{j}\E\big[(\mathcal Z^x_{l})^2\big]
\leq \sum_{l=0}^{\widetilde N-1}\sup_{j\in\mathbb N}\mathbb E[(\mathcal Z^x_{j})^{2}]\leq K\widetilde N.
\end{align*}

Combining \textit{Claims 1--4} completes the proof of the CLT.
\end{proof}

\section{Proof of $\eqref{ex_spde_order}$}\label{spdeorder}

\begin{proof}
By the  H\"older continuity $\|X^x_t-X^x_{\lfloor  \frac{t}{\tau}\rfloor\tau}\|_{L^2(\Omega;E)}\leq K(t-\lfloor \frac{t}{\tau}\rfloor\tau )^{\frac{\beta_1}{2}}\leq K\tau^{\frac{\beta_1}{2}}$ (see e.g. \cite[Section 2.2]{cui2021}) and the definition of $\{Y^{x^h,\Delta}_t\}_{t\ge 0}$, it suffices to prove that $\sup_{k\ge 0}\|X^x_{t_k}-Y^{x^h,\Delta}_{t_k}\|_{L^2(\Omega;E)}\leq K(\tau^{\frac{\beta_1}{2}}+N^{-\beta_1}).$ 
Introduce the auxiliary process
\begin{align}\label{au_spde}
\widetilde Y^{x^h,\Delta}_{t_{k+1}}=\widetilde Y^{x^h,\Delta}_{t_k}+A^h\widetilde Y^{x^h,\Delta}_{t_{k+1}}\tau+P^hF(X^x_{t_{k+1}})\tau+\delta W^h_k.
\end{align}
 We can rewritten \eqref{scheme_spde} and \eqref{au_spde} as
 \begin{align*}
 Y^{x^h,\Delta}_{t_k}&=S^k_{h,\tau}x^h+\tau\sum_{i=0}^{k-1}S^{k-i}_{h,\tau}P^hF(Y^{x^h,\Delta}_{t_{i+1}})+W^k_{A^h},\\
 \widetilde Y^{x^h,\Delta}_{t_k}&=S^k_{h,\tau}x^h+\tau\sum_{i=0}^{k-1}S^{k-i}_{h,\tau}P^hF(X^x_{t_{i+1}})+W^k_{A^h},
 \end{align*}
 where $S_{h,\tau}:=(\mathrm {Id}-A^h\tau)^{-1},$ $W^k_{A^h}:=\sum_{i=0}^{k-1}S_{h,\tau}^{k-i}\delta W^h_i.$
 
 We split the error as $\|X^x_{t_k}-Y^{x^h,\Delta}_{t_k}\|_{L^2(\Omega;E)}\leq \|X^x_{t_k}-P^hX^x_{t_k}\|_{L^2(\Omega;E)}+\|P^hX^x_{t_k}-\widetilde Y^{x^h,\Delta}_{t_k}\|_{L^2(\Omega;E)}+\|\widetilde Y^{x^h,\Delta}_{t_k}-Y^{x^h,\Delta}_{t_k}\|_{L^2(\Omega;E)}.$
 It is clear that $\|X^x_{t_k}-P^hX^x_{t_k}\|_{L^{\tilde q}(\Omega;E)}\leq K\lambda_N^{-\frac{\beta_1}{2}}\|X^x_{t_k}\|_{L^{\tilde q}(\Omega;\dot{E}^{\beta_1})}$ for $\tilde q\ge 2$, where $\lambda_N\sim N^2$ is the $N$-th eigenvalue of $-A$,  and $\dot{E}^{\beta_1}$ is the Sobolev  space generated by the fractional power of $-A$ (see e.g. \cite[Section B.2]{Kruse}). 
 
 For the term $\|P^hX^x_{t_k}-\widetilde Y^{x^h,\Delta}_{t_k}\|_{L^2(\Omega;E)},$ we have 
 \begin{align*}
 &\|P^hX^x_{t_k}-\widetilde Y^{x^h,\Delta}_{t_k}\|_{L^2(\Omega;E)}\nn\\
 \leq&\; \|(e^{A^ht_k}-S^k_{h,\tau})x^h\|_{L^2(\Omega;E)}\nn\\
&+\Big\|\int_0^{t_k}e^{A^h(t_k-s)}P^hF(X^x_s)\mathrm ds-\tau\sum_{i=0}^{k-1}S^{k-i}_{h,\tau}P^hF(X^x_{t_{i+1}})\Big\|_{L^2(\Omega;E)}\\
 &+\Big\|\int_0^{t_k}e^{A^h(t_k-s)}\mathrm dW(s)-\sum_{i=0}^{k-1}S^{k-i}_{h,\tau}\delta W^h_i\Big\|_{L^2(\Omega;E)}\nn\\
 =:&\;JJ_1+JJ_2+JJ_3.
 \end{align*}
 The term $JJ_1$ can be estimated as $
 JJ_1\leq K\tau^{\frac{\beta_1}{2}}
$ (see e.g. \cite[Lemma 3.12]{Kruse}). 

For the term $JJ_2$, noting that similar to the proof of \cite[Lemma 2]{cui2021}, it can be shown that $$\sup_{s\ge 0}\|F(X^x_s)\|_{L^2(\Omega;E)}\leq K\sup_{s\ge 0}\|X^x_s\|_{L^4(\Omega;E)}\sup_{s\ge 0}(1+\|X^x_s\|^2_{L^8(\Omega;\mathcal C((0,1);\mathbb R))})\leq K,$$ which 
combining  the H\"older continuity  $\|X^x_s-X^x_{\lceil  \frac{s}{\tau}\rceil\tau}\|_{L^4(\Omega;E)}\leq K(\lceil  \frac{s}{\tau}\rceil\tau -s)^{\frac{\beta_1}{2}}$ (see e.g. \cite[Section 2.2]{cui2021}) and  $\|(-A)^{-\frac{\beta_1}{2}}(\mathrm{Id}-e^{A^h(s-\lfloor \frac{s}{\tau}\rfloor\tau)})\|_{\mathcal L(E)}\leq K(s-\lfloor \frac{s}{\tau}\rfloor\tau )^{\frac{\beta_1}{2}}$ (see e.g. \cite[Lemma B.9]{Kruse}) leads to 
 \begin{align*}
 JJ_2&\leq \Big\|\int_0^{t_k}e^{A^h(t_k-s)}P^h[F(X^x_{s})-F(X^x_{\lceil \frac{s}{\tau}\rceil\tau})]\mathrm ds\Big\|_{L^2(\Omega;E)}\\
 &\quad+\Big\|\int_0^{t_k}(e^{A^h(t_k-s)}-e^{A^h(t_k-\lfloor \frac{s}{\tau}\rfloor\tau)})P^hF(X^x_{\lceil \frac{s}{\tau}\rceil\tau})\mathrm ds\Big\|_{L^2(\Omega;E)}\\
 &\quad  +\Big\|\sum_{i=0}^{k-1}\int_{t_i}^{t_{i+1}}(e^{A^h(t_k-\lfloor \frac{s}{\tau}\rfloor\tau)}-S^{k-i}_{h,\tau})P^hF(X^x_{\lceil \frac{s}{\tau}\rceil\tau})\mathrm ds\Big\|_{L^2(\Omega;E)}\\
 &\leq \int_0^{t_k}e^{-\lambda_1(t_k-s)}\|F(X^x_s)-F(X^x_{\lceil \frac{s}{\tau}\rceil\tau})\|_{L^2(\Omega;E)}\mathrm ds\\
 &\quad+\!\!\int_0^{t_k}\!
(t_k\!-\!s)^{-\frac{\beta_1}{2}}e^{-\frac{\lambda_1}{2}(t_k-s)}\|(-A)^{-\frac{\beta_1}{2}}(\mathrm {Id}\!-\!e^{A^h(s-\lfloor \frac{s}{\tau}\rfloor\tau)})\|_{\mathcal L(E)}\|F(X^x_{\lceil \frac{s}{\tau}\rceil\tau})\|_{L^2(\Omega;E)}\mathrm ds\\
 &\quad +\sum_{i=0}^{k-1}\int_{t_i}^{t_{i+1}}\|(e^{A^h(t_k-t_i)}-S^{k-i}_{h,\tau})P^hF(X^x_{\lceil \frac{s}{\tau}\rceil\tau})\|_{L^2(\Omega;E)}\mathrm ds\\
 &\leq K\tau^{\frac{\beta_1}{2}}+\sum_{i=0}^{k-1}\int_{t_i}^{t_{i+1}}\|(e^{A^h(t_k-t_i)}-S^{k-i}_{h,\tau})P^hF(X^x_{\lceil \frac{s}{\tau}\rceil\tau})\|_{L^2(\Omega;E)}\mathrm ds. 
 \end{align*}
 Note that the inequalities $\ln (1+\zeta)\ge \frac12\zeta$ and  $\zeta-\ln (1+\zeta)\leq K\zeta^2$ for $\zeta\in(0,1]$ give 
 \begin{align*}
 &\quad \|(e^{A^ht_i}-S^i_{h,\tau})x\|^2\nn\\
&=\sum_{\lambda_j\tau\leq 1}(e^{-\lambda_jt_i}-(1+\tau\lambda_j)^{-i})^2x_j^2+\sum_{\lambda_j\tau>1}(e^{-\lambda_jt_i}-(1+\tau\lambda_j)^{-i})^2x_j^2\\
&= \sum_{\lambda_j\tau\leq 1}e^{-2i\ln (1+\tau\lambda_j)}(1-e^{-i(\tau\lambda_j-\ln (1+\tau\lambda_j))})^2x_j^2\nn\\
&\quad+\sum_{\lambda_j\tau>1}\frac{1}{t_i}\Big[(\lambda_ji\tau)^{\frac12}(e^{-\lambda_jt_i}-(1+\tau\lambda_j)^{-i})\Big]^2\lambda_j^{-1}x_j^2\\
&\leq K\sum_{\lambda_j\tau\leq1}e^{-t_i\lambda_j}(i\tau^2\lambda_j^2)^2x^2_j\nn\\
&\quad+K\tau\sum_{\lambda_j\tau>1}\frac{1}{t_i}(1+\tau\lambda_1)^{-i}\Big[(\lambda_ji\tau)^{\frac 12}(e^{-\lambda_jt_i}(1+\tau\lambda_j)^{\frac i2}-(1+\tau\lambda_j)^{-\frac i2})\Big]^2x^2_j.
 \end{align*}
 Based on $e^{-\zeta^2}\leq K\zeta^{-s}$ for $s>0,\zeta>0$, we obtain  $e^{-\frac12 t_i\lambda_j}\leq K(t_i\lambda_j)^{-3}$. And due to $\zeta^{\frac12}e^{-\frac12\zeta}\leq K$, $1+\zeta\leq e^{\zeta}$, and $(1+\zeta)^i\ge 1+i\zeta,i\ge 1$ for   $\zeta>0$, we derive  $$(\lambda_ji\tau)^{\frac12}e^{-\lambda_jt_i}(1+\tau\lambda_j)^{\frac i2}+(\lambda_ji\tau)^{\frac12}(1+\tau\lambda_j)^{-\frac i2}\leq Ke^{-\frac12\lambda_jt_i}(1+\tau\lambda_j)^{\frac{i}{2}}+\big(\frac{\lambda_ji\tau}{1+i\tau\lambda_j}\big)^{\frac12}\leq K.$$ 
Thus,
 \begin{align*}
 \|(e^{A^ht_i}-S^i_{h,\tau})x\|^2
&\leq K\sum_{\lambda_j\tau\leq 1}e^{-\frac12t_i\lambda_1}(t_i\lambda_j)^{-3}t_i^2\tau^2\lambda_j^4x_j^2+K\tau t_i^{-1}(1+\tau\lambda_1)^{-i}\sum_{\lambda_j\tau>1}x^2_j\\
&\leq K\tau t_i^{-1}(e^{-\frac12t_i\lambda_1}\vee (1+\tau\lambda_1)^{-i})\|x\|^2.
 \end{align*}
Hence, we have
\begin{align*}
 JJ_2&\leq K\tau^{\frac{\beta_1}{2}}+ K\tau^{\frac12}\sum_{i=0}^{k-1}\int_{t_i}^{t_{i+1}}(e^{-\frac{\lambda_1}{4} (t_k-t_i)}\vee (1+\tau\lambda_1)^{-\frac12(k-i)})t_{k-i}^{-\frac12}\|F(X^x_{\lceil \frac{s}{\tau}\rceil\tau})\|_{L^2(\Omega;E)}\mathrm ds\nn\\
 &\leq K\tau^{\frac{\beta_1}{2}}.
\end{align*}
 
For the term $JJ_3$, 
\begin{align*}
 JJ_3&\leq \Big\|\int_0^{t_k}e^{A^h(t_k-s)}(\mathrm{Id}-e^{A^h(s-\lfloor \frac{s}{\tau}\rfloor\tau)})\mathrm dW(s)\Big\|_{L^2(\Omega;E)}\nn\\
 &\quad+\Big\|\int_0^{t_k}(e^{A^h(t_k-\lfloor \frac{s}{\tau}\rfloor\tau)}-S^{k-\lfloor \frac{s}{\tau}\rfloor}_{h,\tau})\mathrm dW(s)\Big\|_{L^2(\Omega;E)}\\&=:JJ_{3,1}+JJ_{3,2}.
\end{align*}
It follows from the It\^o isometry and \cite[Lemma B.9]{Kruse} that the term $JJ_{3,1}$ can be estimated as
\begin{align*}
 (JJ_{3,1})^2=\int_0^{t_k}\|(-A)^{\frac{1}{2}}e^{A^h(t_k-s)}(-A)^{-\frac{\beta_1}{2}}(\mathrm {Id}-e^{A^h(s-\lfloor \frac{s}{\tau}\rfloor\tau)})(-A)^{\frac{\beta_1-1}{2}}Q^{\frac12}\|^2_{\mathcal L_2}\mathrm ds\leq K\tau^{\beta_1}.
\end{align*}
Applying  the It\^o isometry  again, we derive
\begin{align*}
& (JJ_{3,2})^2
 = \int_0^{t_k}\sum_{j=0}^{\infty}\|(-A)^{-\frac{\beta_1-1}{2}}(e^{A^h(t_k-\lfloor \frac{s}{\tau}\rfloor\tau)}-S^{k-\lfloor \frac{s}{\tau}\rfloor}_{h,\tau})(-A)^{\frac{\beta_1-1}{2}}Q^{\frac12}e_j\|^2\mathrm ds\\
 &\leq K\!\int_0^{t_k}\!\Big[\!\!\sum_{\lambda_j\tau\leq 1}\lambda_j^{-\beta_1+1}e^{-2(k-\lfloor \frac{s}{\tau}\rfloor)\ln (1+\tau\lambda_j)}(1\!-\!e^{-(k-\lfloor \frac{s}{\tau}\rfloor)(\tau\lambda_j-\ln (1+\tau\lambda_j))})^2\|(-A)^{\frac{\beta_1-1}{2}}Q^{\frac12}e_j\|^2\\
&\quad +\sum_{\lambda_j\tau>1}2\lambda_j^{-\beta_1}\lambda_j\Big((1+\tau\lambda_j)^{-2(k-\lfloor \frac{s}{\tau}\rfloor)}+e^{-2(t_k-\lfloor \frac{s}{\tau}\rfloor\tau)\lambda_j}\Big)\|(-A)^{\frac{\beta_1-1}{2}}Q^{\frac12}e_j\|^2\Big]\mathrm ds\\
&\leq K\int_0^{t_k}\Big[\sum_{\lambda_j\tau\leq 1}\lambda_j^{1-\beta_1}e^{-\frac12(k-\lfloor \frac{s}{\tau}\rfloor)\tau\lambda_j}((t_k-\lfloor \frac{s}{\tau}\rfloor\tau)\lambda_j)^{-2}(k-\lfloor \frac{s}{\tau}\rfloor)^2(\tau\lambda_j)^4\|(-A)^{\frac{\beta_1-1}{2}}Q^{\frac12}e_j\|^2\\
&\quad +\sum_{\lambda_j\tau>1}\tau^{\beta_1}\lambda_j(e^{-2(k-\lfloor \frac{s}{\tau}\rfloor)\ln(1+\tau\lambda_j)}+e^{-2(t_k-\lfloor \frac{s}{\tau}\rfloor\tau)\lambda_j})\|(-A)^{\frac{\beta_1-1}{2}}Q^{\frac12}e_j\|^2\Big]\mathrm ds\\
&\leq K\|(-A)^{\frac{\beta_1-1}{2}}Q^{\frac12}\|_{\mathcal L_2}^2\tau^{\beta_1},
 \end{align*}
 where in the last step we used $$\lambda_j^{-\beta_1}((t_k-\lfloor \frac{s}{\tau}\rfloor\tau)\lambda_j)^{-2}(k-\lfloor \frac{s}{\tau}\rfloor)^2(\tau\lambda_j)^4\leq \lambda^{-\beta_1}_j\tau^2\lambda_j^2\leq (\lambda_j\tau)^{2-\beta_1}\tau^{\beta_1}\leq \tau^{\beta_1}$$ for $\lambda_j\tau\leq 1$ and $$\sup_{k,j}\int_0^{t_k}\lambda_j(e^{-\zeta(t_k-\lfloor \frac{s}{\tau}\rfloor\tau)\frac{\ln (1+\tau\lambda_j)}{\tau}}+e^{-\zeta(t_k-\lfloor \frac{s}{\tau}\rfloor\tau)\lambda_j})\mathrm ds\leq K$$ for $\zeta>0$.
Moreover, by using the Burkholder--Davis--Gundy inequality (see e.g.  \cite[Proposition 2.12]{Kruse}), one can derive that for $\tilde q_0\ge 2,$ 
$\|P^hX^x_{t_k}-\widetilde Y^{x^h,\Delta}_{t_k}\|_{L^{\tilde q_0}(\Omega;E)}\leq K\tau^{\frac{\beta_1}{2}}.$
 
 For the term $\|\widetilde Y^{x^h,\Delta}_{t_k}-Y^{x^h,\Delta}_{t_k}\|_{L^2(\Omega;E)}$, denoting $\tilde e_k:=\widetilde Y^{x^h,\Delta}_{t_k}-Y^{x^h,\Delta}_{t_k},$ we have 
 \begin{align*}
 \tilde e_k-\tilde e_{k-1}=A^h\tilde e_k\tau+\tau P^h(F(X^x_{t_{k}})-F(Y^{x^h,\Delta}_{t_k})).
 \end{align*}
 Hence, by taking $\langle \cdot,\,\tilde e_k\rangle$ on both sides, we obtain
 \begin{align*}
 \frac12(\|\tilde e_k\|^2-\|\tilde e_{k-1}\|^2)&\leq \tau\langle A^h\tilde e_k,\tilde e_k\rangle +\tau\langle P^h(F(X^x_{t_{k}})-F(Y^{x^h,\Delta}_{t_k})),\tilde e_k\rangle\\
 &\leq -(\lambda_1-\lambda_F)\|\tilde e_k\|^2\tau+\tau\langle P^h(F(X^x_{t_k})-F(\widetilde Y^{x^h,\Delta}_{t_k})),\tilde e_k\rangle\\
 &\leq -\frac{\lambda_1-\lambda_F}{2}\|\tilde e_k\|^2\tau+\frac{\tau}{2(\lambda_1-\lambda_F)}\|F(X^x_{t_k})-F(\widetilde Y^{x^h,\Delta}_{t_k})\|^2,
 \end{align*}
 which gives
 \begin{align*}
  e^{\epsilon t_k}\|\tilde e_k\|^2
&\leq  \sum_{i=0}^{k-1}e^{\epsilon t_{i}}\|\tilde e_i\|^2\Big(\frac{e^{\epsilon\tau}}{1+(\lambda_1-\lambda_F)\tau}-1\Big)\\
&\quad +\frac{1}{1+(\lambda_1-\lambda_F)\tau}\sum_{i=0}^{k-1}e^{\epsilon t_{i+1}}\frac{\tau}{(\lambda_1-\lambda_F)}\|F(X^x_{t_{i+1}})-F(\widetilde Y^{x^h,\Delta}_{t_{i+1}})\|^2.
 \end{align*}
 Take $\epsilon\ll1$ so that $\frac{e^{\epsilon \tau}}{1+(\lambda_1-\lambda_F)\tau}\leq 1$. 
Similar to the proof of \cite[Lemmas 2 and 4]{cui2021}, it can be shown that $\sup_{k\ge 0}\mathbb E[\|X^x_{t_k}\|^8_{\mathcal C((0,1);\mathbb R)}+\|\widetilde Y^{x^h,\Delta}_{t_k}\|^8_{\mathcal C((0,1);\mathbb R)}]\leq K,$ which implies 
\begin{align*}
&\quad \sup_{k\ge 0}\mathbb E[\|F(X^x_{t_k})-F(\widetilde Y^{x^h,\Delta}_{t_k})\|^2]\\
&\leq K\sup_{k\ge 0}\|X^x_{t_k}-\widetilde Y^{x^h,\Delta}_{t_k}\|^2_{L^4(\Omega;E)}\sup_{k\ge 0}(\mathbb E[1+\|X^x_{t_k}\|^8_{\mathcal C((0,1);\mathbb R)}+\|\widetilde Y^{x^h,\Delta}_{t_k}\|^8_{\mathcal C((0,1);\mathbb R)})])^{\frac12}\\
&\leq K(\tau^{\beta_1}+N^{-2\beta_1}).
\end{align*}
Hence, we obtain  $\mathbb E[\|\tilde e_k\|^2]\leq K(\tau^{\beta_1}+N^{-2\beta_1})$, which finishes the proof.
\end{proof}

\section{Proofs of $\eqref{sfdebound}$ and $\eqref{sfdecov}$}\label{sfdeproof}
We first propose the hypotheses of coefficients $b$, $\sigma$, and the initial datum $x(\cdot)$.

\textbf{(H1).}
Assume that there are positive constants $\bar L_1,\bar L_2$, and the probability measure $\nu_1$ on $[-\delta_0, 0]$ such that  
\begin{align*}
|\sigma(\phi_1)-\sigma(\phi_2)|^2\leq \bar L_1\Big(|\phi_1(0)-\phi_2(0)|^2+\int_{-\delta_0}^{0}|\phi_1(\theta)-\phi_2(\theta)|^2\mathrm d\nu_1(\theta)\Big)\quad\forall\;\phi_1,\phi_2\in E
\end{align*}
and 
\begin{align*}
|\sigma(\phi)|\leq \bar L_2\quad\forall\; \phi\in E.
\end{align*}

\textbf{(H2).}
Assume that there are positive constants $\bar L_3,\bar L_4, \bar L_5$ with $\bar L_3 >\bar L_1+\bar L_4$ and the probability measure $\nu_2$ on $[-\delta_0, 0]$ such that for any $\phi_1,\phi_2\in E,$
\begin{align*}
\big\langle \phi_1(0)-\phi_2(0), b(\phi_1)-b(\phi_2) \big\rangle\leq -\bar L_3|\phi_1(0)-\phi_2(0)|^2+\bar L_4\int_{-\delta_0}^{0}|\phi_1(\theta)-\phi_2(\theta)|^2\mathrm d\nu_2(\theta)
\end{align*}
and
\begin{align*}
|b(\phi_1)-b(\phi_2)|^2\leq \bar L_5\Big(|\phi_1(0)-\phi_2(0)|^2+\int_{-\delta_0}^{0}|\phi_1(\theta)-\phi_2(\theta)|^2\mathrm d\nu_2(\theta)\Big).
\end{align*}

\textbf{(H3).}
Assume that there are positive constants $\bar L_6$ and $\ell\geq 1/2$ such that
\begin{align*}
|x(\theta_1)-x(\theta_2)|\leq \bar L_6|\theta_1-\theta_2|^{\ell}\quad\forall\;\theta_1,\theta_2\in [-\delta_0, 0].
\end{align*}

\begin{proof}
\textbf{Proof of \eqref{sfdebound}.}
To simplify notations we introduce the following abbreviations
\begin{align*}
&y(t_k)=y^{x,\tau}(t_k), ~~Y_{t_k}=Y^{x^h,\Delta}_{t_k},
~~b_{k}=b(Y^{x^h,\Delta}_{t_k}),~~\sigma_k=\sigma(Y^{x^h,\Delta}_{t_k}).
\end{align*}
It follows from \eqref{thL}, $y(t_k)=Y_{t_k}(0)$, and the hypotheses (H1)--(H2) that for any $\epsilon\in(0,1)$,  
\begin{align*}
&\;|y(t_{k+1})|^2=|y(t_k)|^2+|b_k|^2\tau^2+|\sigma_k\delta W_k|^2
+2\langle y(t_k), b_k\rangle\tau+\mathcal M_k\nn\\
\leq&~|y(t_k)|^2+(1+\frac{1}{\epsilon})\tau^2|b({\textbf 0})|^2+(1+\epsilon)\tau^2|b_k-b({\textbf 0})|^2+\bar L_2^2|\delta W_k|^2+
2\tau\langle y(t_k), b({\textbf 0})\rangle\nn\\
&~-2\bar L_3\tau|y(t_k)|^2+2\bar L_4\tau\int_{-\delta_0}^0|Y_{t_k}(\theta)|^2\mathrm d\nu_2(\theta)+\mathcal M_{k}\nn\\
\leq&~|y(t_k)|^2+K(1+\frac{1}{\epsilon})\tau+\bar L_2^2|\delta W_k|^2-\Big(2\bar L_3-(1+\epsilon)\tau\bar L_5-\epsilon\Big)\tau|y(t_k)|^2\nn\\
&+\Big(2\bar L_4+(1+\epsilon)\tau\bar L_5\Big)\tau\int_{-\delta_0}^0|Y_{t_k}(\theta)|^2\mathrm d\nu_2(\theta)+
\mathcal M_{k},
\end{align*}
where the martingale $\{\mathcal M_{k}\}_{k=0}^{\infty}$ is defined by
\begin{align*}
\mathcal M_k:=2\langle y(t_k), \sigma_k\delta W_k\rangle+2\langle b_k, \sigma_k\delta W_k\rangle\tau.
\end{align*}
Then for any $q_0\in\mathbb N$ and $\tilde\e>0$,
\begin{align}\label{1p2.1}
e^{\tilde \e t_{k+1}}|y(t_{k+1})|^{2q_0}=&\sum_{i=0}^k\big( e^{\tilde \e t_{i+1}}|y(t_{i+1})|^{2q_0}-e^{\tilde \e t_{i}}|y(t_{i})|^{2q_0}\big)+|x(0)|^{2q_0}\nn\\
\leq&~|x(0)|^{2q_0}+\sum_{i=0}^{k}(1-e^{-\tilde \e \tau})e^{\tilde \e t_{i+1}}|y(t_{i})|^{2q_0}+\sum_{l=1}^{q_0}\sum_{i=0}^kC_{q_0}^{l} I_{l,i},
\end{align}
where positive constants $C_{q_0}^l$ are binomial coefficients,
and 
\begin{align*}
I_{l,i}&:=e^{\tilde \e t_{i+1}}|y(t_{i})|^{2(q_0-l)}\Big(K(1+\frac{1}{\epsilon})\tau+\bar L_2^2|\delta W_i|^2-\big(2\bar L_3-(1+\epsilon)\tau\bar L_5-\epsilon\big)\tau|y(t_i)|^2\nn\\
&\quad +\big(2\bar L_4+(1+\epsilon)\tau\bar L_5\big)\tau\int_{-\delta_0}^0|Y_{t_i}(\theta)|^2\mathrm d\nu_2(\theta)+
\mathcal M_{i}\Big)^{l}.
\end{align*}
Applying the Young inequality and the H\"older inequality yields
\begin{align*}
I_{1,i}&\leq\frac{e^{\tilde \e t_{i+1}}K}{(\epsilon\tau)^{q_0-1}}\Big((1+\frac{1}{\epsilon})^{q_0}\tau^{q_0}+\bar L_2^{2q_0}|\delta W_i|^{2q_0}\Big)-\big(2\bar L_3\!-\!(1+\epsilon)\tau\bar L_5-3\epsilon\big)\tau e^{\tilde \e t_{i+1}}|y(t_{i})|^{2q_0}\nn\\
&\quad+\big(2\bar L_4+(1+\epsilon)\tau\bar L_5\big)\tau e^{\tilde \e t_{i+1}}\Big(\frac{q_0-1}{q_0}|y(t_{i})|^{2q_0}+\frac{1}{q_0}\int_{-\delta_0}^0|Y_{t_i}(\theta)|^{2q_0}\mathrm d\nu_2(\theta)\Big)\nn\\
&\quad+e^{\tilde \e t_{i+1}}|y(t_{i})|^{2(q_0-1)}\mathcal M_{i},
\end{align*}
and 
\begin{align*}
|I_{l,i}|&\leq K6^{^{l-1}}e^{\tilde \e t_{i+1}}|y(t_{i})|^{2(q_0-l)}\Big((1+\frac{1}{\epsilon})^l\tau^l+\bar L_2^{2l}|\delta W_i|^{2l}+\tau^l|y(t_i)|^{2l}\nn\\
&\quad +\tau^{l}\int_{-\delta_0}^0|Y_{t_i}(\theta)|^{2l}\mathrm d\nu_2(\theta)+
|y(t_i)|^l|\sigma_i\delta W_i|^l+\tau^{l}|b_i|^l|\sigma_i\delta W_i|^l\Big)\nn\\
&\leq 3\epsilon\tau e^{\tilde \e t_{i+1}}|y(t_{i})|^{2q_0}+
 \frac{Ke^{\tilde \e t_{i+1}}}{(\epsilon\tau)^{\frac{q_0-l}{l}}}\Big((1+\frac{1}{\epsilon})^{q_0}\tau^{q_0}+\bar L_2^{2q_0}|\delta W_i|^{2q_0}\Big)+K \tau^{l}e^{\tilde \e t_{i+1}}|y(t_{i})|^{2q_0}\nn\\
&\quad +K\tau^{l}\int_{-\delta_0}^0|Y_{t_i}(\theta)|^{2q_0}\mathrm d\nu_2(\theta)+\frac{Ke^{\tilde \e t_{i+1}}}{(\epsilon\tau)^{\frac{2q_0-l}{l}}}|\delta W_i|^{2q_0} +K\tau^{l}|b_i|^{q_0}|\delta W_{i}|^{q_0}
\end{align*}
for the integer $l\in[2,q_0]$.
Inserting the above inequalities into \eqref{1p2.1} and using (H2), we obtain
\begin{align}\label{10032}
&e^{\tilde \e t_{k+1}}|y(t_{k+1})|^{2q_0}\nn\\
\leq&~|x(0)|^{2q_0}+\sum_{i=0}^{k}(1-e^{-\tilde \e \tau})e^{\tilde \e t_{i+1}}|y(t_{i})|^{2q_0}+
K_{\epsilon}\tau\sum_{i=0}^ke^{\tilde \e t_{i+1}}+K_{\epsilon}\sum_{i=0}^k\frac{|\delta W_i|^{2q_0}}{\tau^{q_0-1}}e^{\tilde \e t_{i+1}}\nn\\
&+K\tau^{2}\sum_{i=0}^k\mathscr E_{i}-\Big(q_0\big(2\bar L_3-(1+\epsilon)\tau\bar L_5\big)-3\epsilon\cdot 2^{q_0}\Big)\tau \sum_{i=0}^k e^{\tilde \e t_{i+1}}|y(t_{i})|^{2q_0}\nn\\
&+q_0\big(2\bar L_4+(1+\epsilon)\tau\bar L_5\big)\tau \sum_{i=0}^{k}e^{\tilde \e t_{i+1}}\Big(\frac{q_0-1}{q_0}|y(t_{i})|^{2q_0}+\frac{1}{q_0}\int_{-\delta_0}^0|Y_{t_i}(\theta)|^{2q_0}\mathrm d\nu_2(\theta)\Big)\nn\\
&+q_0\sum_{i=0}^{k}e^{\tilde \e t_{i+1}}|y(t_{i})|^{2(q_0-1)}\mathcal M_{i},
\end{align}
where 
\begin{align*}
\mathscr E_{i}:=e^{\tilde \e t_{i+1}}|y(t_{i})|^{2q_0}+e^{\tilde \e t_{i+1}}\int_{-\delta_0}^0|Y_{t_i}(\theta)|^{2q_0}\mathrm d\nu_2(\theta)+e^{\tilde \e t_{i+1}}|\delta W_{i}|^{2q_0}.
\end{align*}
It follows from \eqref{EMsfde} and the convex property of $|\cdot|^{2q_0}$ that
\begin{align}\label{5p2.1}
&\sum_{i=0}^{k}e^{\tilde \e t_{i+1}}\int_{-\delta_0}^{0}|Y_{t_i}(\theta)|^{2q_0}\mathrm d\nu_2(\theta)\nn\\
=&\sum_{j=-N}^{-1}\sum_{i=0}^{k}e^{\tilde \e t_{i+1}}\int_{t_j}^{t_{j+1}}
|\frac{t_{j+1}-\theta}{\tau}y(t_{i+j})+\frac{\theta-t_{j}}{\tau}y(t_{i+j+1})|^{2q_0}\mathrm d\nu_2(\theta)\nn\\
\leq{}&e^{\tilde \e\delta_0}\sum_{j=-N}^{-1}\sum_{i=0}^{k}\int_{t_j}^{t_{j+1}}
\frac{t_{j+1}-\theta}{\tau}\mathrm d\nu_2(\theta)e^{\tilde \e t_{i+j+1}}|y(t_{i+j})|^{2q_0}\nn\\
&+e^{\tilde \e\delta_0}\sum_{j=-N}^{-1}\sum_{i=0}^{k}\int_{t_j}^{t_{j+1}}
\frac{\theta-t_{j}}{\tau}\mathrm d\nu_2(\theta)e^{\tilde \e t_{i+j+2}}|y(t_{i+j+1})|^{2q_0}\nn\\
\leq{}&e^{\tilde \e\delta_0}\sum_{j=-N}^{-1}\sum_{l=-N}^{k}\int_{t_j}^{t_{j+1}}
\frac{t_{j+1}-\theta}{\tau}\mathrm d\nu_2(\theta)e^{\tilde \e t_{l+1}}|y(t_{l})|^{2q_0}\nn\\
&+e^{\tilde \e\delta_0}\sum_{j=-N}^{-1}\sum_{l=-N}^{k}\int_{t_j}^{t_{j+1}}
\frac{\theta-t_{j}}{\tau}\mathrm d\nu_2(\theta)e^{\tilde \e t_{l+1}}|y(t_{l})|^{2q_0}\nn\\
={}&e^{\tilde \e\delta_0}\sum_{j=-N}^{-1}\sum_{l=-N}^{k}\int_{t_j}^{t_{j+1}}
\mathrm d\nu_2(\theta)|y(t_{l})|^{2q_0}
\leq Ne^{\tilde \e\delta_0}\|x\|^{2q_0}+e^{\tilde \e\delta_0}\sum_{i=0}^{k}e^{\tilde \e t_{i+1}}|y(t_{i})|^{2q_0}.
\end{align}
Plugging \eqref{5p2.1} into \eqref{10032}  and using $1-e^{-\tilde\e\tau}\leq \tilde \e\tau$ leads to
\begin{align}\label{10033}
&e^{\tilde \e t_{k+1}}|y(t_{k+1})|^{2q_0}\nn\\
\leq&~K e^{\tilde \e\delta_0}\|x\|^{2q_0}+K_{\epsilon}\tau\sum_{i=0}^ke^{\tilde \e t_{i+1}}+K_{\epsilon}\sum_{i=0}^k\frac{|\delta W_i|^{2q_0}}{\tau^{q_0-1}}e^{\tilde \e t_{i+1}}+K\tau^{2}\sum_{i=0}^ke^{\tilde \e t_{i+1}}|\delta W_i|^{2q_0}\nn\\
&-\Big(q_0\big(2\bar L_3-(1+\epsilon)\tau\bar L_5\big)-3\epsilon\cdot 2^{q_0}-\tilde \e-K\tau-q_0\big(2\bar L_4+(1+\epsilon)\tau\bar L_5\big)e^{\tilde\e\delta_0}\Big)\tau\nn\\
&\times \sum_{i=0}^k e^{\tilde \e t_{i+1}}|y(t_{i})|^{2q_0}+q_0\sum_{i=0}^{k}e^{\tilde \e t_{i+1}}|y(t_{i})|^{2(q_0-1)}\mathcal M_{i}.
\end{align}
Owing to $\bar L_3\geq \bar L_1+\bar L_4$, choose a $\tilde \tau\in(0,1]$ sufficiently small such that
$$q_0\big(2\bar L_3-\tilde\tau\bar L_5\big)-K\tilde\tau-q_0\big(2\bar L_4+\tilde\tau\bar L_5\big)> 0.$$
Let $\tilde \e_0, \epsilon_0\in(0,1]$ sufficiently small such that 
$$q_0\big(2\bar L_3-(1+\epsilon_0)\tilde\tau\bar L_5\big)-3\epsilon_0\cdot 2^{q_0}-\tilde\e_0-K\tilde\tau-q_0\big(2\bar L_4+(1+\epsilon_0)\tilde\tau\bar L_5\big)e^{\tilde\e_0\delta_0}\geq0.$$
This, along with \eqref{10033} implies that for any $\tau\in(0,\tilde \tau]$,
\begin{align}\label{10035}
e^{\tilde \e_0 t_{k+1}}|y(t_{k+1})|^{2q_0}
\leq&~K e^{\tilde \e_0\delta_0}\|x\|^{2q_0}+
K_{\epsilon_0}\tau\sum_{i=0}^ke^{\tilde \e_0 t_{i+1}}+K_{\epsilon_0}\sum_{i=0}^k\frac{|\delta W_i|^{2q_0}}{\tau^{q_0-1}}e^{\tilde \e_0 t_{i+1}}\nn\\
&~+K\tau^{2}\sum_{i=0}^ke^{\tilde \e_0 t_{i+1}}|\delta W_i|^{2q_0}+q_0\sum_{i=0}^{k}e^{\tilde \e_0 t_{i+1}}|y(t_{i})|^{2(q_0-1)}\mathcal M_{i}.
\end{align}
Taking expectations in the above inequality we deduce
\begin{align*}
e^{\tilde \e_0 t_{k+1}}\E[|y(t_{k+1})|^{2q_0}]
\leq K e^{\tilde \e_0\delta_0}\|x\|^{2q_0}+K_{\epsilon_0}e^{\tilde \e_0 t_{k+1}},
\end{align*}
which implies 
\begin{align}\label{10034}
\sup_{k\geq0}\E[|y(t_{k})|^{2q_0}]\leq K(1+\|x\|^{2q_0}).
\end{align}
By virtue of \eqref{10035} and \eqref{10034} we conclude that for any $\tau\in(0,\tilde \tau]$,
\begin{align}\label{10041}
&e^{\tilde \e_0 (t_{k+1}-\delta_0)}\E\big[\sup_{(k-N)\vee 0\leq l\leq k}|y(t_{l+1})|^{2q_0}\big]\nn\\
\leq&~\E\big[\sup_{(k-N)\vee 0\leq l\leq k}e^{\tilde \e_0 t_{l+1}}|y(t_{l+1})|^{2q_0}\big]\nn\\
\leq&~K e^{\tilde \e_0\delta_0}\|x\|^{2q_0}+K_{\epsilon_0}\tau\sum_{i=0}^ke^{\tilde \e_0 t_{i+1}}\nn\\
&+q_0\E\big[\sup_{(k-N)\vee0\leq l\leq k}\sum_{i=(k-N)\vee 0}^{l}e^{\tilde \e_0 t_{i+1}}|y(t_{i})|^{2(q_0-1)}\mathcal M_{i}\big].
\end{align}
Applying the Burkholder--Davis--Gundy inequality, the condition (H1), and the Young inequality yields
\begin{align*}
&\E\big[\sup_{(k-N)\vee0\leq l\leq k}\sum_{i=(k-N)\vee 0}^{l}e^{\tilde \e_0 t_{i+1}}|y(t_{i})|^{2(q_0-1)}\mathcal M_{i}\big]\nn\\
\leq&~ K\E \Big[\Big(\sum_{i=(k-N)\vee 0}^{k}e^{2\tilde \e_0 t_{i+1}}|y(t_{i})|^{4q_0-2}|\sigma_i|^{2}\tau\Big)^{\frac{1}{2}}\Big]\nn\\
&+ K\E \Big[\Big(\sum_{i=(k-N)\vee 0}^{k}e^{2\tilde \e_0 t_{i+1}}|y(t_{i})|^{4q_0-4}|b_i|^{2}|\sigma_i|^2\tau^{\frac32}\Big)^{\frac{1}{2}}\Big]\nn\\
\leq &~\frac{1}{4q_0}\E\Big[\sup_{(k-N)\vee0\leq i\leq k}e^{\tilde\e_0 t_{i+1}}|y(t_{i})|^{2q_0}\Big]+K\Big(\sum_{i=(k-N)\vee 0}^ke^{\frac{1}{q_0} \tilde\e_0 t_{i+1}}\tau\Big)^{q_0}\nn\\
&+K\E\Big[\Big(\sum_{i=(k-N)\vee 0}^ke^{\frac{2}{q_0} \tilde\e_0 t_{i+1}}|b_i|^2\tau^{\frac32}\Big)^{\frac{q_0}{2}}\Big]\nn\\
\leq &~\frac{1}{4q_0}\E\Big[e^{\tilde \e_0\tau}\Big(e^{\tilde\e_0 t_{(k-N)\vee0}}|y(t_{(k-N)\vee0})|^{2q_0}+\sup_{(k-N)\vee0\leq i\leq k}e^{\tilde\e_0 t_{i+1}}|y(t_{i+1})|^{2q_0}\Big)\Big]\nn\\
&+Ke^{\tilde \e_0 t_{k+1}}+K\E\Big[\sum_{i=(k-N)\vee0}^k|b_i|^{q_0}\tau^{\frac{q_0}{4}+1}e^{\tilde \e_0 t_{i+1}}\Big]\nn\\
\leq &~\frac{1}{2q_0}\E\Big[\sup_{(k-N)\vee0\leq i\leq k}e^{\tilde\e_0 t_{i+1}}|y(t_{i+1})|^{2q_0}\Big]+K(1+\|x\|^{2q_0})e^{\tilde \e_0 t_{k+1}},
\end{align*}
where in the last step we used $e^{\zeta}\leq 1+2\zeta\leq 2$ for $\zeta\in(0,\frac12],$ the hypothesis (H2), and \eqref{10034}.
Therefore, 
\begin{align*}
\E\big[\sup_{(k-N)\vee 0\leq l\leq k}|y(t_{l+1})|^{2q_0}\big]
\leq&K(1+\|x\|^{2q_0}).
\end{align*} 
Combining the definition of $\{Y^{x^h,\Delta}_{t_k}\}_{k\in\mathbb N}$ (see \eqref{thL}) implies \eqref{sfdebound}.

\textbf{Proof of \eqref{sfdecov}.}
Introduce the continuous version of $\{y^{x,\tau}(t_k)\}_{k\in\mathbb N}$ as
\begin{align}\label{con_y(t)}
y^{x,\tau}(t)&=y^{x,\tau}(t_k)+b(Y^{x^{h},\Delta}_{t_k})(t-t_k)\nn\\
&\quad+\sigma(Y^{x^{h},\Delta}_{t_k}) \big(W(t)-W(t_k)\big)\quad\forall\; t\in[t_k,t_{k+1}).
\end{align}
Recalling $$Y^{x^h,\Delta}_{t}=\sum_{k=0}^{\infty}Y^{x^h,\Delta}_{t_k} \textbf 1_{[t_k, t_{k+1})}(t),$$
it follows from \eqref{sfde} and \eqref{con_y(t)} that
\begin{align*}
X^{x}(t)-y^{x,\tau}(t)=\int_{0}^{t}(b(X^{x}_s)-b(Y^{x^h,\Delta}_s))\mathrm ds+\int_{0}^{t}(\sigma(X^{x}_s)-\sigma(Y^{x^h,\Delta}_s))\mathrm dW_s.
\end{align*}
By the It\^o formula and the Young inequality we deduce that for any $\epsilon\in(0,1)$,
\begin{align*}
&e^{\epsilon t}|X^{x}(t)-y^{x,\tau}(t)|^2\nn\\
=&\int_{0}^{t}e^{\epsilon s}\Big(\epsilon|X^{x}(s)-y^{x,\tau}(s)|^2
+2\<X^{x}(s)-y^{x,\tau}(s),b(X^{x}_s)-b(Y^{x^h,\Delta}_s)\>\nn\\&+|\sigma(X^{x}_s)-\sigma(Y^{x^h,\Delta}_s)|^2\Big)\mathrm ds+\mathcal N_t\nn\\
\leq&\int_{0}^{t}e^{\epsilon s}\Big(\epsilon|X^{x}(s)-y^{x,\tau}(s)|^2
+2\<X^{x}(s)-y^{x,\tau}(s),b(X^{x}_s)-b(y^{x,\tau}_s)\>\nn\\
&+(1+\epsilon)|\sigma(X^{x}_s)-\sigma(y^{x,\tau}_s)|^2\Big)\mathrm ds+\int_{0}^{t}\epsilon e^{\epsilon s}|X^{x}(s)-y^{x,\tau}(s)|^2\mathrm ds+\mathcal E_t+\mathcal N_t,
\end{align*}
where  $\mathcal N_{t}$ is a martingale defined by
\begin{align*}
\mathcal N_{t}=2\int_{0}^{t}e^{\epsilon s}\<X^{x}(s)-y^{x,\tau}(s),(\sigma(X^{x}_s)-\sigma(Y^{x^h,\Delta}_s))\mathrm dW_s\>,
\end{align*}
 the $E$-valued stochastic process $\{y^{x,\tau}_t\}_{t\geq 0}$ is defined by
$$y^{x,\tau}_t(\theta)=y^{x,\tau}(t+\theta)~~~\forall\theta\in[-\delta_0,0],$$
and
\begin{align*}
\mathcal E_t:=\int_{0}^{t}e^{\epsilon s}\Big(\frac{1}{\epsilon}|b(y^{x,\tau}_s)-b(Y^{x^h,\Delta}_s)|^2+(1+\frac{1}{\epsilon})|\sigma(y^{x,\tau}_s)-\sigma(Y^{x^h,\Delta}_s)|^2\Big)\mathrm ds.
\end{align*}
Using  (H1)--(H2) yields
\begin{align*}
&e^{\epsilon t}|X^{x}(t)-y^{x,\tau}(t)|^2\nn\\
\leq&\int_{0}^{t}\Big(-\big(2\bar L_3-2\epsilon-(1+\epsilon)\bar L_1\big)e^{\epsilon s}|X^{x}(s)-y^{x,\tau}(s)|^2\nn\\
&+2\bar L_4\int_{-\delta_0}^{0}e^{\epsilon s}|X^{x}_s(r)-y^{x,\tau}_s(r)|^2\mathrm d\nu_2(r)+(1+\epsilon)\bar L_1\int_{-\delta_0}^{0}e^{\epsilon s}|X^{x}_s(r)-y^{x,\tau}_s(r)|^2\mathrm d\nu_1(r)\Big)\mathrm ds\nn\\
&+\mathcal E_t+\mathcal N_t\nn\\
\leq&-\Big(2\bar L_3-2\epsilon-(1+\epsilon)\bar L_1-(2\bar L_4 +(1+\epsilon)\bar L_1)e^{\epsilon\delta_0}\Big)\int_{0}^{t}e^{\epsilon s}|X^{x}(s)-y^{x,\tau}(s)|^2\mathrm ds\nn\\
&+(2\bar L_4 +(1+\epsilon)\bar L_1)e^{\epsilon\delta_0}\int_{-\delta_0}^{0}e^{\epsilon s}|X^x(s)-y^{x,\tau}(s)|^2\mathrm ds+\mathcal E_t+\mathcal N_t\nn\\
=&-\Big(2\bar L_3-2\epsilon-(1+\epsilon)\bar L_1-(2\bar L_4 +(1+\epsilon)\bar L_1)e^{\epsilon\delta_0}\Big)\int_{0}^{t}e^{\epsilon s}|X^{x}(s)-y^{x,\tau}(s)|^2\mathrm ds+\mathcal E_t+\mathcal N_t,
\end{align*}
where we used 
\begin{align*}
&\int_{0}^{t}\int_{-\delta_0}^{0}e^{\epsilon s}|X^{x}_s(r)-y^{x,\tau}_s(r)|^2\mathrm d\nu_i(r)\mathrm ds\nn\\
\leq &~e^{\epsilon \delta_0}\int_{0}^{t}\int_{-\delta_0}^{0}e^{\epsilon (s+r)}|X^{x}(s+r)-y^{x,\tau}(s+r)|^2\mathrm d\nu_i(r)\mathrm ds\nn\\
\leq&~ e^{\epsilon \delta_0}\int_{-\delta_0}^{0}e^{\epsilon s}|X^{x}(s)-y^{x,\tau}(s)|^2\mathrm ds+e^{\epsilon \delta_0}\int_{0}^{t}e^{\epsilon s}|X^{x}(s)-y^{x,\tau}(s)|^2\mathrm ds\nn\\
=&~e^{\epsilon \delta_0}\int_{0}^{t}e^{\epsilon s}|X^{x}(s)-y^{x,\tau}(s)|^2\mathrm ds~~~\mbox{for}~i=1,2.
\end{align*}
In view of $\bar L_3>\bar L_1+\bar L_4$, choose  $\epsilon\in(0,1)$ sufficiently small such that
$$2\bar L_3-2\epsilon-(1+\epsilon)\bar L_1-(2\bar L_4 +(1+\epsilon)\bar L_1)e^{\epsilon \delta_0}\geq0.$$
This implies 
\begin{align}\label{l4.5.2}
e^{\epsilon t}|X^{x}(t)-y^{x,\tau}(t)|^2
\leq \mathcal E_t+\mathcal N_t.
\end{align}
Making use of (H1)--(H3), 
 similarly to proofs of \cite[Lemma 3.3]{Mao2003} and \cite[Lemma 3.6]{MLS}, one has
 \begin{align}\label{1001}
 \sup_{t\geq0}\E[\|y^{x,\tau}_t-Y^{x^h,\Delta}_t\|^2]\leq K(1+\|x\|^2)\tau.
 \end{align}
This, along with  \eqref{l4.5.2} and  (H1) implies that
\begin{align}\label{l4.5.3}
\E[|X^{x}(t)-y^{x,\tau}(t)|^2]\leq{}& e^{-\epsilon t}\E[ \mathcal E_t]
\leq K\int_{0}^{t}e^{\epsilon (s-t)}
\sup_{\theta\in[-\delta_0,0]}\E[|y^{x,\tau}_s(\theta)-Y^{x^h,\Delta}_s(\theta)|^2]\mathrm ds \nn\\
\leq {}&K(1+\|x\|^{2})\tau.
\end{align}
Furthermore, according to \eqref{l4.5.2}, and using the Burkholder--Davis--Gundy inequality, we have
\begin{align*}
&\E\Big[\sup_{(t-\delta_0)\vee 0 \leq u\leq t}e^{\epsilon u}|X^{x}(u)-y^{x,\tau}(u)|^2\Big]
\leq \E[\mathcal E_t]+\E \Big[\sup_{(t-\delta_0)\vee 0 \leq u\leq t}\mathcal N_u\Big]\nn\\
\leq&K(1+\|x\|^{2})e^{\epsilon t}\tau+K\E\Big[\Big(\int_{(t-\delta_0)\vee 0}^{t}e^{2\epsilon s}|X^{x}(s)-y^{x,\tau}(s)|^2|\sigma(X^{x}_s)-\sigma(Y^{x^h,\Delta}_s)|^2\mathrm ds\Big)^{\frac{1}{2}}\Big]\nn\\
\leq&K(1+\|x\|^{2})e^{\epsilon t}\tau+K\E\Big[\big(\sup_{(t-\delta_0)\vee 0 \leq u\leq t}e^{\epsilon u}|X^{x}(u)-y^{x,\tau}(u)|^2\big)^{\frac12}\nn\\
&\times\Big(\int_{(t-\delta_0)\vee 0}^{t}e^{\epsilon s}|\sigma(X^{x}_s)-\sigma(Y^{x^h,\Delta}_s)|^2\mathrm ds\Big)^{\frac{1}{2}}\Big].
\end{align*}
Applying the Young inequality, and then by (H1) and \eqref{1001}  we arrive at
\begin{align*}
&\E\Big[\sup_{(t-\delta_0)\vee 0 \leq u\leq t}e^{\epsilon u}|X^{x}(u)-y^{x,\tau}(u)|^2\Big]\nn\\
\leq{}&K(1+\|x\|^{2})e^{\epsilon t}\tau+\frac{1}{2}\E\Big[\sup_{(t-\delta_0)\vee 0 \leq u\leq t}e^{\epsilon u}|X^{x}(u)-y^{x,\tau}(u)|^2\Big]\nn\\
&+K\E\Big[\int_{(t-\delta_0)\vee 0}^{t}e^{\epsilon s}|\sigma(X^{x}_s)-\sigma(Y^{x^h,\Delta}_s)|^2\mathrm ds\Big]\nn\\
\leq&K(1+\|x\|^{2})e^{\epsilon t}\tau+\frac{1}{2}\E\Big[\sup_{(t-\delta_0)\vee 0 \leq u\leq t}e^{\epsilon u}|X^{x}(u)-y^{x,\tau}(u)|^2\Big]\nn\\
&+K\int_{(t-\delta_0)\vee 0}^{t}e^{\epsilon s}\E\big[|X^{x}(s)-Y^{x^h,\Delta}(s)|^2\big]\mathrm ds\nn\\
&+\int_{(t-\delta_0)\vee 0}^{t}e^{\epsilon s}\int_{-\delta_0}^{0}\E\big[|X^{x}_s(r)-Y^{x^h,\Delta}_s(r)|^2\big]\mathrm d \nu_{1}(r)\mathrm ds.
\end{align*}
Making use of  \eqref{1001} and \eqref{l4.5.3} leads to
\begin{align*}
&\frac{1}{2}e^{\epsilon(t-\delta_0)\vee 0}\E\Big[\sup_{(t-\delta_0)\vee 0 \leq u\leq t}|X^{x}(u)-y^{x,\tau}(u)|^2\Big]\nn\\
\leq {}&\frac{1}{2}\E\Big[\sup_{(t-\delta_0)\vee 0 \leq u\leq t}e^{\epsilon u}|X^{x}(u)-y^{x,\tau}(u)|^2\Big]\nn\\
\leq{}& K(1+\|x\|^{2})e^{\epsilon t}\tau,
\end{align*}
which implies
$$\E\Big[\sup_{(t-\tau)\vee 0 \leq u\leq t}|X^{x}(u)-y^{x,\tau}(u)|^2\Big]\leq {}K(1+\|x\|^{2})\tau.$$
Combining \eqref{1001} finishes the proof of the desired assertion \eqref{sfdecov}.
\end{proof}

\end{appendix}

\bibliographystyle{plain}
\bibliography{ldp.bib}

\end{document}